\setlist[description]{leftmargin=\parindent,labelindent=\parindent}
\newcommand{\evec}{{\vec{e}} \nobreak\hspace{.16667em plus .08333em}'}
\newtheorem{thm}{Theorem}[section]
\newtheorem{prop}[thm]{Proposition}
\newtheorem{lem}[thm]{Lemma}
\newtheorem{cor}[thm]{Corollary}
\newtheorem{conj}[thm]{Conjecture}
\theoremstyle{definition}
\newtheorem{example}[thm]{Example}
\newtheorem{rem}[thm]{Remark}
\newtheorem{question}[thm]{Question}
\numberwithin{equation}{section}
\newcommand{\U}{\mathcal{U}}
\newcommand{\cO}{\mathcal{O}}
\newcommand{\X}{\mathcal{X}}
\newcommand{\Y}{\mathcal{Y}}
\newcommand{\J}{\mathcal{J}}
\newcommand{\Z}{\mathcal{Z}}
\newcommand{\QQ}{\mathbb{Q}}
\newcommand{\CC}{\mathbb{C}}
\newcommand{\RR}{\mathbb{R}}
\newcommand{\pp}{\mathbb{P}}
\newcommand{\PP}{\mathbb{P}}
\renewcommand{\O}{\mathcal{O}}
\DeclareMathOperator{\Aut}{Aut}
\DeclareMathOperator{\Pic}{Pic}
\DeclareMathOperator{\Spec}{Spec}
\DeclareMathOperator{\coker}{coker}
\DeclareMathOperator{\Sym}{Sym}
\DeclareMathOperator{\Hom}{Hom}
\DeclareMathOperator{\Stab}{Stab}
\DeclareMathOperator{\End}{End}
\DeclareMathOperator{\diag}{diag}
\DeclareMathOperator{\SUT}{\mathsf{SU}}
\renewcommand{\d}{\mathrm{d}}
\renewcommand{\gg}{\mathbb{G}}
\renewcommand{\evec}{{\vec{e}} \nobreak\hspace{.16667em plus .08333em}'}
\renewcommand{\k}{\mathbf{k}}
\renewcommand{\S}{\mathcal{S}}
\renewcommand{\nu}{v}
\newcommand*{\Lcorner}{%
    \mathchoice%
        {\mathrel{\makebox[7pt][c]{\rule{.7pt}{7.5pt}\rule{5pt}{.7pt}}}}%
        {\mathrel{\makebox[7pt][c]{\rule{.7pt}{7.5pt}\rule{5pt}{.7pt}}}}%
        {\mathrel{\makebox[5.5pt][c]{\rule{.7pt}{5.25pt}\rule{3.5pt}{.7pt}}}}%
        {\mathrel{\makebox[4pt][c]{\rule{.7pt}{3.75pt}\rule{2.5pt}{.7pt}}}}%
}
\renewcommand{\llcorner}{\Lcorner}
\title{Brill--Noether theory of smooth curves in the plane and on Hirzebruch surfaces}
\author{Hannah Larson}
\author{Sameera Vemulapalli}
\begin{document}

\maketitle

\begin{abstract}
In this paper, we describe the Brill--Noether theory of a general smooth plane curve and a general curve $C$ on a Hirzebruch surface of fixed class. It is natural to study the line bundles on such curves according to the splitting type of their pushforward along 
projection maps $C \to \pp^1$. Inspired by Wood's parameterization of ideal classes in rings associated to binary forms, we further refine the stratification of line bundles $L$ on $C$ by fixing the splitting types of both $L$ and $L(\Delta)$, where $\Delta \subset C$ is the intersection of $C$ with the directrix of the Hirzebruch surface. Our main theorem determines the dimensions of these locally closed strata and, if the characteristic of the ground field is zero, proves that they are smooth.
\end{abstract}

\section{Introduction}

Brill--Noether theory studies maps from curves to projective spaces.
Given a curve $C$, the data of a degree $d$ map $C \to \pp^r$ is equivalent to the data of a degree $d$ line bundle on $C$ together with a choice of $r+1$ global sections having no common zeros. As such, a central object of study is the
\emph{Brill--Noether locus}
\[W^r_d(C) := \{L \in \Pic^d(C): h^0(L) \geq r+1\},\]
parameterizing the line bundles giving rise to such maps.
When $C$ is a \emph{general} curve of genus $g$, the famous Brill--Noether theorem gives a nice description of these varieties: $W^r_d(C)$ has the ``expected dimension"
$\rho(g, r, d) := g - (r + 1)(g - d + r)$ \cite[Main Thereom II]{GH}
and is irreducible when its dimension is positive \cite[Corollary 1]{FL}. Moreover, the open subset $U^r_d(C) := W^r_d(C) \smallsetminus W^{r+1}_d(C)$ of line bundles with exactly $r+1$ global sections is smooth and is dense within $W^r_d(C)$ \cite[Theorem 1.1]{GP}.

However, the curves that we come across in nature often already live in (or map to) some projective space, and the very presence of such a map may mean our curve fails the Brill--Noether theorem! The first case of this is curves already equipped with a low-degree map to $\pp^1$. If $\alpha: C \to \pp^1$ is a general degree $k$ cover, then $W^r_d(C)$ could have multiple components of varying dimensions \cite[Corollary 1.3]{refined}.

The key to understanding these components is to study a finer invariant of line bundles called the \emph{splitting type}.
If $L$ is a line bundle on $C$, then $\alpha_*L$ is a rank $k$ vector bundle on $\pp^1$. By the Birkoff--Grothendieck theorem, every vector bundle on $\pp^1$ is a direct sum of line bundles.
The splitting type of $L$ is the tuple of integers $\vec{e} = (e_1, \ldots, e_k)$ with $e_1 \leq \cdots \leq e_k$ such that $\alpha_*L \cong \O(e_1) \oplus \cdots \oplus \O(e_k)$. We often abbreviate the corresponding sum of line bundles on $\pp^1$ by $\O(\vec{e})$.
The ``expected codimension" of a splitting type is
\[u(\vec{e}) := h^1(\pp^1, \mathrm{End}(\O(\vec{e})) = \sum_{i, j} \max\{0, e_i - e_j - 1\}.\]
Given two splitting types $\evec$ and $\vec{e}$, we write $\evec \leq \vec{e}$ if $\evec$ has the potential to arise as a specialization of $\vec{e}$, i.e. if $e_1' + \ldots + e_j' \leq e_1 + \ldots + e_j$ for all $j$ and equality holds for $j = k$.
We then define the \emph{Brill--Noether splitting locus}
\[W^{\vec{e}}(C) := \{L \in \Pic^d(C) : \alpha_*L \cong \O(\evec) \text{ for some $\evec \leq \vec{e}$}\}.\]
When $C \to \pp^1$ is a \emph{general} degree $k$, genus $g$ cover, the Hurwitz--Brill--Noether theorem gives a nice description of these Brill--Noether splitting loci: $W^{\vec{e}}(C)$ has the ``expected dimension" $\rho'(g, \vec{e}) := g - u(\vec{e})$ \cite[Theorem 1.2]{refined} and is irreducible when its dimension is positive \cite[Theorem 1.2]{llv}. Moreover, the open subset 
\[U^{\vec{e}}(C) := \{L \in \Pic^d(C) : \alpha_*L \cong \O(\vec{e})\}\]
of line bundles with splitting type $\vec{e}$ is smooth \cite[Theorem 1.2]{refined} and it is dense within $W^{\vec{e}}(C)$ \cite[Lemma 2.1]{refined}.

In this paper, we consider the natural next case of curves equipped with a map to the plane, provided that the image is smooth. If $\iota: C \hookrightarrow \pp^2$ is a smooth, degree $k$ plane curve, then projection from any point not on $C$ defines a degree $k$ map $\alpha: C \to \pp^1$. These projections are evidently special degree $k$ covers, and so the Hurwitz--Brill--Noether theorem does not apply. Nevertheless, the splitting type of line bundles with respect to these projections is a fruitful invariant to study. Note that the splitting type of $L$ with respect to $\alpha: C \to \pp^1$ is equivalent to the data of $h^0(C, L \otimes \alpha^*\O_{\pp^1}(1)) = h^0(C, L \otimes \iota^*\O_{\pp^2}(1))$, and thus is is independent of the choice of projection $\alpha$. Throughout, we work over an algebraically closed field.

\begin{thm} \label{planecurves}
Let $C \subset \pp^2$ be a smooth plane curve of degree $k$ and let $\alpha: C \to \pp^1$ be the projection from any point not on $C$. If $U^{\vec{e}}(C)$ is non-empty, then $e_{i+1} - e_i \leq 1$ for all $i$. If $e_{i+1} - e_i \leq 1$ for all $i$ and $C$ is general, then $U^{\vec{e}}(C)$ is non-empty of pure dimension
\[\dim U^{\vec{e}}(C) = g - \#\{(i, j) : e_i - e_j \geq 2\}.\]
If the characteristic of the ground field is zero, then $U^{\vec{e}}(C)$ is smooth for general $C$.
\end{thm}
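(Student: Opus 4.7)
The strategy is to view $C$ inside the Hirzebruch surface $\mathbb{F}_1 = \mathrm{Bl}_p\pp^2$ for some point $p\notin C$. There $C$ has class $k(\sigma_0+F)$ and is disjoint from the directrix $\sigma_0$, so $\Delta := C\cap \sigma_0 = \emptyset$ and $\alpha$ is the restriction of the ruling $\mathbb{F}_1\to\pp^1$. The plane-curve theorem is then the $\Delta = \emptyset$ case of the general Hirzebruch-surface theory developed in this paper: since $L(\Delta)=L$ in this case, the refined stratification by splitting-type pairs collapses to a stratification by the splitting type of $L$ alone.

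For the constraint $e_{i+1}-e_i\leq 1$, identify $L$ with its graded module $M_L = \bigoplus_n H^0(C,L(n))$ over $R/(f)$, where $R = \mathbf{k}[s_0,s_1,s_2]$ and $f$ is the defining equation of $C$. As a maximal Cohen-Macaulay module over the hypersurface ring $R/(f)$, Auslander--Buchsbaum gives $M_L$ a length-one minimal graded free resolution over $R$, equivalently a matrix factorization $(\phi,\psi)$ of $f$. A direct Hilbert-series computation then yields
\[ \sum_i t^{-e_i}(1-t) = \sum_i t^{a_i} - \sum_j t^{b_j}, \]
where $a_i,b_j$ are the degrees of the generators and relations. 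If $e_{j+1}-e_j\geq 2$ for some $j$, minimality forces an all-zero off-diagonal block separating the generators above and below the gap, making $\phi$ block upper-triangular. The resulting factorization of $\det\phi$ into polynomials of degrees each strictly between $0$ and $k$ contradicts $\det\phi \propto f^r$ for some $r$, since $f$ is irreducible. Hence $e_{i+1}-e_i\leq 1$.

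For non-emptiness and the dimension formula when $C$ is general, the plan is to produce at least one plane curve and line bundle realizing each prescribed balanced splitting type, then invoke semicontinuity of splitting type in families. Starting from the trivial bundle (or powers $\O_C(n)$, whose splitting type $(n-k+1,\ldots,n)$ is automatically balanced) and twisting by divisors supported on carefully chosen fibers of $\alpha$ moves us between balanced strata of a fixed degree; every balanced $\vec{e}$ of the correct total degree is reachable by such a sequence of twists. A parameter count of the matrix factorizations of $f$ with the prescribed degree pattern $(a_i,b_j)$ (forced by the Hilbert-series identity) gives an upper bound matching the formula $g - \#\{(i,j) : e_i - e_j \geq 2\}$; irreducibility of the resulting strata is handled as in the $k$-gonal case.

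For smoothness in characteristic zero, analyze the Kodaira--Spencer map
\[ \kappa : H^1(C,\O_C) \to \Ext^1_{\pp^1}(\alpha_*L,\alpha_*L) = \bigoplus_{i>j,\, e_i-e_j\geq 2} H^1\!\bigl(\pp^1,\O(e_j-e_i)\bigr), \]
whose kernel is the tangent space $T_L U^{\vec{e}}(C)$. The main obstacle is identifying the image of $\kappa$ precisely: unlike the generic $k$-gonal case (where $\kappa$ is surjective, so the codimension is the full $u(\vec{e})$), for a plane curve the image is confined to a distinguished sub-Ext of dimension $\#\{(i,j):e_i-e_j\geq 2\}$ with exactly one line in each summand $H^1(\pp^1,\O(e_j-e_i))$, corresponding to the ``top'' Cech cocycle. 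Proving that $\kappa$ surjects onto exactly this subspace requires both producing explicit first-order deformations of $L$ (via extensions through $\mathbb{F}_1$) that realize each top direction, and ruling out other directions via a cup-product computation on the universal deformation which is clean in characteristic zero. Matching this tangent-space dimension with the dimension from the previous paragraph then gives smoothness.
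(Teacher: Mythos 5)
Your reduction to $\mathbb{F}_1$ with $\Delta=\emptyset$ is exactly the paper's framing, and your matrix-factorization argument for the necessity of $e_{i+1}-e_i\leq 1$ is a reasonable variant of what the paper does (there, a gap forces a zero block in the pair $(A,B)$ representing $\gamma^*E(-1)\to\gamma^*F$, so $\det(Ax+By)$ factors, contradicting irreducibility). But the heart of the theorem --- non-emptiness and the dimension formula for a \emph{general} plane curve --- is not proved by your sketch. Producing one curve carrying a line bundle of splitting type $\vec{e}$ and ``invoking semicontinuity'' does not yield the statement for general $C$: semicontinuity only says splitting types degenerate in limits, and the image in the Severi variety of the locus of curves admitting such a bundle is merely constructible/closed, not automatically dense. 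What is actually needed is that the universal stratum $\U^{\vec{e},\vec{e}}$ \emph{dominates} the space of smooth plane curves, equivalently that a general $f$ admits a determinantal/matrix-factorization representation with the degree pattern forced by $\vec{e}$. Your ``reachable by twisting by divisors supported on fibers'' claim asserts precisely the hard statement (which splitting types occur on a \emph{fixed} general curve) without argument, and a parameter count of matrix factorizations gives only an upper bound on dimension, not existence or dominance. In the paper this is the content of Lemma \ref{mainl}: the map $(A,B)\mapsto\det(Ax+By)$ is dominant whenever the anti-diagonal degree conditions hold, proved by a rather delicate induction (Laplace expansion along the bottom row, the subspaces $T^{k-1}_p$ and $T^{\llcorner}_p$, and a flat-limit argument for images of the differential along a one-parameter degeneration). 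Nothing in your proposal substitutes for this step.

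The smoothness and dimension discussion has the same problem in a different guise: you correctly identify that the tangent space to $U^{\vec{e}}(C)$ is the kernel of a map $H^1(C,\O_C)\to H^1(\pp^1,\End(\alpha_*L))$, and that the claimed dimension would follow if its image were a distinguished subspace of dimension $\#\{(i,j):e_i-e_j\geq 2\}$, but you explicitly defer the proof of that surjectivity-onto-a-subspace statement (``producing explicit first-order deformations \ldots and ruling out other directions via a cup-product computation''), so this is a plan rather than a proof, and it is not clear the image is literally a coordinate subspace with one line per summand. The paper avoids this local analysis entirely: by Wood's parameterization, $\U^{\vec{e},\vec{f}}$ is a quotient of an open subset of an affine space by $G^{\vec{e},\vec{f}}$, hence smooth and irreducible of computable dimension (Corollary \ref{dcor}); once dominance over $\mathcal{S}$ is known, the fiber dimension is the difference of dimensions and smoothness of the general fiber follows from generic smoothness in characteristic zero. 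So the genuine gaps are: (i) dominance/non-emptiness for general $C$, and (ii) the tangent-space identification underlying your smoothness claim --- both are the actual mathematical content of the theorem and both are left unproved in your proposal.
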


Note that the splitting type of the structure sheaf is $(-k + 1, -k +2, \ldots, -1, 0)$. Therefore, the dimension zero splitting loci are precisely the structure sheaf and multiples of the $g^2_k$ (whose splitting types are shifts of that of the structure sheaf).
In contrast with the situation for general covers, for plane curves, $U^{\vec{e}}(C)$ need not be dense in $W^{\vec{e}}(C)$, as the following example illustrates!

\begin{example}
Suppose $C \subset \pp^2$ is a general degree $7$ plane curve, so its genus is $15$.
The splitting types that occur in $U^2_{14}(C)$ (degree $14$ line bundles with exactly $3$ sections) and the dimensions of these loci are listed below.
\begin{center}
\begin{tikzpicture}
\node at (0, 0) {$(-2, -2, -2, -1, 0, 0, 0)$}; 
\node at (-4, -1.5) {$(-3, -2, -1, -1, 0, 0, 0)$};
\node at (4, -1.5) {$(-2, -2, -2, -1, -1, 0, 1)$};
\node at (0, -3) {$(-3, -2, -1, -1, -1, 0, 1)$};
\draw[<-] (-1.2, -.4) -- (-3.5, -1.1);
\draw[<-] (1.2, -.4) -- (3.5, -1.1);
\draw[->] (-1.2, -3 +.4) -- (-3.5, -3+1.1);
\draw[->] (1.2, -3 +.4) -- (3.5, -3+1.1);
\node at (8, 0) {$6$};
\node at (8, -1.5) {$7$};
\node at (8, -3) {$5$};
\node at (0, 1) {$\vec{e}$};
\node at (8, 1) {$\dim U^{\vec{e}}(C)$};
\draw[<-] (0, -.4) -- (0, -3 + .4);
\end{tikzpicture}
\end{center}
In the diagram, there is an arrow drawn between splitting types $\evec$ and $\vec{e}$ when $\evec < \vec{e}$.
If $U^{\evec}(C)$ is contained in the closure of $U^{\vec{e}}(C)$ then semicontinuity theorems imply that $\evec \leq \vec{e}$. However, as evidenced by the dimensions above, the converse need not hold! In particular, we see that $W^{(-2, -2, -2, -1, 0, 0, 0)}(C)$ has at least three components, at least two of which are not contained in the closure of $U^{(-2, -2, -2, -1, 0, 0, 0)}(C)$. We are unsure how many of the strict splitting loci below contain the last splitting type in their closure.

\end{example}

\begin{rem}
The above example shows that $W^{\vec{e}}(C)$ may have many 
components of varying dimensions when $C$ is a smooth plane curve. Nevertheless, the strict splitting loci $U^{\vec{e}}(C)$ are pure-dimensional.
As we shall explain, Wood's parameterization implies that as we vary over all smooth plane curves, the universal $\U^{\vec{e}}$ is irreducible. Thus, even though the general fiber $U^{\vec{e}}(C)$ may be reducible, all components must be exchanged under monodromy.
\end{rem}

\subsection{Curves on Hirzebruch surfaces}
Theorem \ref{planecurves} is a special case of a more general result about smooth curves on Hirzebruch surfaces.
Indeed, the projection of a plane curve from a point not on it naturally factors through the blowup of $\pp^2$ at the point. Thus, we obtain our curve lying on the Hirzebruch surface $\mathbb{F}_1$ in such a way that it does not meet the exceptional divisor.

More generally, let $\mathbb{F}_m$ be the Hirzebruch surface $\pp(\O_{\pp^1}\oplus \O_{\pp^1}(m))$ and suppose $C \subset \mathbb{F}_m$ is a smooth curve. Composing with the projection $\mathbb{F}_m \to \pp^1$ gives rise to a degree $k$ map $\alpha: C \to \pp^1$.
Next, let $D \subset \mathbb{F}_m$ be the unique curve of self-intersection $-m$, called the \emph{directrix}. (If $m = 0$, then $\mathbb{F}_m \cong \pp^1 \times \pp^1$ and we define $D$ to be the class of the fiber of the other projection.)
The intersection $\Delta := C \cap D$ defines a distinguished effective divisor
on $C$ (or divisor class if $m = 0$). 
Note that the number $\delta := \deg(\Delta)$ determines the class of $C \subset \mathbb{F}_m$, as we have already decided $C$ meets a fiber of projection to $\pp^1$ in $k$ points. In turn, these quantities determine the genus by adjunction, namely (see \eqref{gf})
\[g = {k \choose 2} m + (k - 1)(\delta - 1).\] 

Given a line bundle $L$ on $C \subset \mathbb{F}_m$, 
we study the \emph{simultaneous} splitting types of $L$ and $L(\Delta)$. We define
\[U^{\vec{e}, \vec{f}}(C) := \{L \in \Pic(C) : \alpha_*L \cong \O(\vec{e}) \text{ and } \alpha_*L(\Delta) \cong \O(\vec{f})\}. \]
Clearly, we have 
\[U^{\vec{e}}(C) = \bigcup_{\vec{f}} U^{\vec{e},\vec{f}}(C),\]
where above the union runs over all $\vec{f}$.
The following example illustrates how, when $\Delta$ is non-empty, the $U^{\vec{e},\vec{f}}(C)$ refine the usual stratification by splitting type.
\begin{example} \label{trige}
Suppose $m = 3$, $k = 3$, and $\delta = 2$. We have $g = 11$. 
Let $\Delta = \{x, y\} \subset C$. Note that $C$ is a special trigonal curve since $m > 1$.
In \cite[Example 3.7]{trig}, it was shown that $U^{(-8, -4, -1)}(C)$ has three components, each isomorphic to the curve $C$ or $C$ minus a point:
\begin{align*}
C_1 &= \{x - p: p \in C \smallsetminus x\} \\
C_2 &= \{y - p: p \in C \smallsetminus y\} \\
C_3 &= \{x+ y + p - F : p \in C\}.
\end{align*}
Here, $F$ denotes the class of the fiber on $\mathbb{F}_m$, which is the $g^1_3$ on $C$. These components are pictured on the left below. The ``missing point" at the intersection of $\overline{C}_1$ with $\overline{C}_2$ corresponds to $[\O_C] \in U^{(-8, -5, 0)}(C)$.

Theorem \ref{maint} below tells us that $U^{(-8,-4,-1), \vec{f}}(C)$ is non-empty for the following $\vec{f}$
and determines their dimensions
\begin{center}
\begin{tabular}{c|c|c|c}
  $\vec{f}$ &
  ${\color{orange} (-7, -4, 0)}$
 & ${\color{violet} (-7, -3, -1)}$ &  ${\color{green!70!black} (-6,-4,-1)}$    \\[6pt]
  \hline 
  & & & \\[-8pt]
$\dim$   &  $0$ & $1$ & $1$ \\
  \hline 
  & & & \\[-8pt]
 $U^{(-8,-4,-1),\vec{f}}$ & $\{x-y,y-x\}$ & $C_1 \cup C_2 \smallsetminus \{x - y, y - x\}$ & $C_3 \smallsetminus (C_1 \cup C_2)$
\end{tabular}
\end{center}
The picture on the right labels the non-empty $U^{(-8,-4,-1), \vec{f}}(C) \subset U^{(-8,-4,-1)}(C)$ each by a different color according to their $\vec{f}$. 

\begin{figure}[h!]
\centering
\begin{tikzpicture}[scale=1.3]
  \draw[domain=.8:3.1,smooth,variable=\x] plot ({\x},{1.4*(\x-2)*(\x-2)});
    \draw[domain=-1.1:1.2,smooth,variable=\x] plot ({\x},{1.4*(\x)*(\x)});
  \draw[domain = -1:3,smooth,variable=\x] plot ({\x},{-.1*(\x-1)*(\x-1) + .7});
  \node[scale=.5, color = white] at (1, 1.4) {$\bullet$};
    \node[scale=.7] at (1, 1.4) {$\circ$};
   \node[scale=.7] at (-1.1, 1.9) {$C_1$};
      \node[scale=.7] at (3.1, 1.9) {$C_2$};
       \node[scale=.7] at (-1.2, .2) {$C_3$};
\end{tikzpicture}
\hspace{1in}
\begin{tikzpicture}[scale=1.3]
    \draw[thick, domain = -1:3,smooth,variable=\x,black!30!green] plot ({\x},{-.1*(\x-1)*(\x-1) + .7});
  
  \draw[thick, domain=.8:3.1,smooth,variable=\x,violet] plot ({\x},{1.4*(\x-2)*(\x-2)});
    \draw[thick, domain=-1.1:1.2,smooth,variable=\x,violet] plot ({\x},{1.4*(\x)*(\x)});
  \node[scale=.5, color = white] at (1, 1.4) {$\bullet$};
    \node[scale=.7] at (1, 1.4) {$\circ$};

  \node[scale=.8, color = orange] at (-.85, 1) {$\bullet$};
    \node[scale=.8, color = orange] at (-1.3, 1) {$x - y$};
        \node[scale=.8, color = orange] at (3.3, 1) {$y - x$};

  \node[scale=.8, color = orange] at (2.85, 1) {$\bullet$};
  
\end{tikzpicture}
\label{f5}
\end{figure}
\noindent
Given a point $p \in C$, we write $p'$ and $p''$ for the two points such that $p + p' + p'' = F$.
Then, $C_1 \cap C_3 = \{x - y', x - y''\}$ and $C_2 \cap C_3 = \{y - x', y - x''\}$. These points are all colored purple on the right.
Notice that although each of $U^{(-8,-4,-1), {\color{violet} (-7,-3,-1)}}(C)$ and $U^{(-8,-4,-1), {\color{orange} (-7,-4,0)}}(C)$ has two components, their two components are exchanged by swapping $x$ and $y$. Thus, as we vary over the full linear system of curves of class $3H + 2F$ on $\mathbb{F}_m$, the components cannot be distinguished.
Finally, note that the three possibilities for $\vec{f}$ satisfy
\[{\color{orange} (-7, -4, 0)} \leq {\color{violet} (-7, -3, -1)} \leq {\color{green!70!black} (-6, -4, -1)},\]
so we ``expect" that each should lie in the closure of the next. 
However, the latter two have the same dimension, so the ${\color{violet} \vec{f} = (-7, -3, -1)}$ locus clearly cannot lie in the closure of the ${\color{green!70!black} \vec{f} = (-6, -4, -1)}$ locus.
More curious is that, even though the ${\color{orange} \vec{f} = (-7, -4, 0)}$ locus has smaller dimension than the ${\color{green!70!black} \vec{f} = (-6, -4, -1)}$ locus, the former does not lie in the closure of the latter.
\end{example}

Our main theorem determines the precise conditions under which $U^{\vec{e}, \vec{f}}(C)$ is non-empty for a general curve $C \subset \mathbb{F}_m$, and its dimension in that case. We also prove smoothness in characteristic zero.

\begin{thm} \label{maint}
Suppose $C \subset \mathbb{F}_m$ is a smooth curve of genus $g$ meeting the directrix in degree $\delta$. If $U^{\vec{e}, \vec{f}}(C)$ is non-empty, then
\begin{enumerate}
    \item $f_i \geq e_i$
    \item $f_i \geq e_{i+1} -m$
    \item $\sum (f_i - e_i) = \delta$.
\end{enumerate} 
Conversely, if the inequalities above are satisfied and $C$ is general, then $U^{\vec{e}, \vec{f}}(C)$ is non-empty of pure dimension
\[\dim U^{\vec{e}, \vec{f}}(C) = g - u(\vec{e}) - u(\vec{f}) + h^1(\pp^1, \mathrm{Hom}(\O(\vec{e}), \O(\vec{f})) \otimes (\O \oplus \O(m)).\]
If the characteristic of the ground field is zero, then $U^{\vec{e}, \vec{f}}(C)$ is smooth for general $C$.
\end{thm}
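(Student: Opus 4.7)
The plan is to prove Theorem~\ref{maint} in three stages: necessity of the inequalities, sufficiency with the dimension formula via a Wood-style parameter space, and smoothness in characteristic zero from the smoothness of that parameter space. For the necessary conditions, (3) is immediate from Riemann--Roch on $\pp^1$, since the inclusion $\alpha_*L \hookrightarrow \alpha_*L(\Delta)$ has torsion cokernel of length $\delta$, giving $\sum f_i - \sum e_i = \delta$. Condition (1) follows from the injection $\O(\vec{e}) = \alpha_*L \hookrightarrow \alpha_*L(\Delta) = \O(\vec{f})$ on $\pp^1$. Condition (2) requires more: I would extract it from the $\alpha_*\O_C$-module structure combined with the multiplication morphism
\[ \mu : \alpha_*L \otimes \pi_*\O_{\mathbb{F}_m}(D) \to \alpha_*L(\Delta), \]
which, after identifying $\pi_*\O_{\mathbb{F}_m}(D) \cong \O \oplus \O(-m)$, becomes a surjective morphism $\O(\vec{e}) \oplus \O(\vec{e})(-m) \twoheadrightarrow \O(\vec{f})$. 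The rigidity of how the two multiplication components must interleave on summands---a constraint strictly stronger than generic surjectivity of a rank-$2k$-to-rank-$k$ map---should yield (2) after a careful filtration-theoretic argument.

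For sufficiency and the dimension formula, I would construct a Wood-style parameter space $\W^{\vec{e}, \vec{f}}$ parameterizing tuples $(C \subset \mathbb{F}_m, L, \alpha_*L \cong \O(\vec{e}), \alpha_*L(\Delta) \cong \O(\vec{f}))$. The data is encoded by a defining equation for $C$ (a section of the appropriate line bundle on $\mathbb{F}_m$) together with the pair of multiplication maps $(\mu_0, \mu_1) \in \Hom(\O(\vec{e}), \O(\vec{f})) \oplus \Hom(\O(\vec{e}), \O(\vec{f})(m))$, subject to compatibility with the algebra structure on $\alpha_*\O_C$. The resulting $\W^{\vec{e}, \vec{f}}$ is an open subscheme of an affine variety, carrying a free gauge action of $\Aut(\O(\vec{e})) \times \Aut(\O(\vec{f}))$. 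Fibering the gauge quotient over the linear system $|kD + (\delta+km)F|$ of curves in the given class, the fiber over $[C]$ is $U^{\vec{e}, \vec{f}}(C)$. A standard Euler-characteristic computation on $\pp^1$, combined with the adjunction formula $g = \binom{k}{2}m + (k-1)(\delta-1)$, then yields
\[ \dim U^{\vec{e}, \vec{f}}(C) = g - u(\vec{e}) - u(\vec{f}) + h^1(\pp^1, \Hom(\O(\vec{e}), \O(\vec{f})) \otimes (\O \oplus \O(m))). \]
Non-emptiness of the generic fiber follows from irreducibility of $\W^{\vec{e}, \vec{f}}$ together with dominance of its projection to $|kD + (\delta+km)F|$. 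Smoothness of $U^{\vec{e}, \vec{f}}(C)$ for general $C$ in characteristic zero then follows from generic smoothness of this projection, since $\W^{\vec{e}, \vec{f}}$ itself is smooth.

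The main obstacles are two-fold. First, constructing and controlling the Wood parameter space: one must verify that every $(C, L)$ with the prescribed splitting types is recovered by some parameter point, that the gauge quotient is well-behaved, and that the projection to $|kD + (\delta+km)F|$ is dominant with fibers of the expected dimension---this includes checking that the locus of $(\mu_0, \mu_1)$ yielding a smooth curve of the correct class in the correct linear system is dense. Second, the derivation of inequality (2): this is strictly stronger than the naive generic-surjectivity analysis of $\O(\vec{e}) \oplus \O(\vec{e})(-m) \twoheadrightarrow \O(\vec{f})$ (which yields only $f_i \geq e_i - m$), and one must exploit the fact that $\mu$ respects the $\alpha_*\O_C$-algebra structure---perhaps by examining the rank-$k$ kernel of $\mu$ and its projections to the two summands---to obtain the refined bound.
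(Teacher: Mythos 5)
Your overall architecture is the same as the paper's: parameterize pairs $(C,L)$ with fixed simultaneous splitting types by an open subset $X^{\vec{e},\vec{f}}$ of the affine space $H^0(\pp^1,\Hom(\O(\vec{e}),\O(\vec{f}))\otimes(\O\oplus\O(m)))$ via Wood's parameterization (the paper realizes this through the Str\o mme sequence of $\iota_*L(\Delta)$), quotient by $\Aut\O(\vec{e})\times\Aut\O(\vec{f})$ (note the action is not free --- there is a diagonal $\gg_m$ stabilizer, which is why the paper passes to $G^{\vec{e},\vec{f}}$ and rigidifies), compute $\dim\U^{\vec{e},\vec{f}}-\dim\mathcal{S}$ by an Euler-characteristic count, and invoke generic smoothness in characteristic zero. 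Your treatment of conditions (1) and (3) is fine and matches Remark \ref{nec}. But there are two genuine gaps. First, and most seriously, the entire ``converse'' direction hinges on the dominance of $X^{\vec{e},\vec{f}}\to\mathcal{S}$ (equivalently, of the determinant map $\phi^k:(A,B)\mapsto\det(Ax+By)$ onto the space of equations of smooth irreducible curves) whenever (1)--(3) hold; you list this as something ``to check,'' but it is the main technical content of the paper (Lemma \ref{mainl} and all of Section 3: restriction to the lower/strictly-upper triangular locus, an induction on $k$ via Laplace expansion along the bottom row, and a flat-limit argument showing the images of the ``inductive'' and ``corner'' tangent subspaces span after specialization). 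Nothing in your proposal supplies an argument for this, and without it non-emptiness and the dimension of the general fiber $U^{\vec{e},\vec{f}}(C)$ are unproven --- irreducibility of the parameter space alone does not give dominance, and it is not even clear a priori when $X^{\vec{e},\vec{f}}$ is non-empty.

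Second, your route to the necessity of (2) does not work as described. The surjection $\O(\vec{e})\oplus\O(\vec{e})(-m)\twoheadrightarrow\O(\vec{f})$ (equivalently the subbundle inclusion $\O(\vec{e})\hookrightarrow\O(\vec{f})\oplus\O(\vec{f})(m)$ obtained from the twisted Str\o mme sequence) only yields interleaving inequalities of the shape $e_{k+1-j}\leq\bigl(j\text{-th largest degree of }\O(\vec{f})\oplus\O(\vec{f})(m)\bigr)$, which, as you yourself note, are strictly weaker than $f_i\geq e_{i+1}-m$; the promised ``filtration-theoretic argument'' using the $\alpha_*\O_C$-module structure is exactly the missing content. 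The paper's proof of (2) is different and uses the matrix presentation itself: if $b_{i,k-i}=f_i-e_{i+1}+m<0$, then degree reasons force a zero block in $Ax+By$, so $\det(Ax+By)$ factors as a product of two determinants of positive degree and the curve is reducible, contradicting smoothness and irreducibility (Lemma \ref{easy}). (The alternative field-theoretic argument via $\Stab(AB)$ in Section 5 recovers (1) and (2) only for \emph{primitive} covers, so it cannot substitute in general.) To complete your proposal you would need to supply both the reducibility argument for (2) and, above all, a proof of the dominance statement.
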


\begin{rem} \label{nec}
It is not hard to see the \emph{necessity} of the conditions $(1)$ and $(3)$ above. 
\begin{enumerate}
\item[(1)] has simple geometric explanation: Since $\Delta$ is effective, 
\[h^0(\pp^1, \O(\vec{f})(n)) = h^0(C, L(\Delta) \otimes \alpha^* \O_{\pp^1}(n)) \geq h^0(C, L \otimes \alpha^*\O_{\pp^1}(n)) = h^0(\pp^1, \O(\vec{e})(n))\]
for all $n$. This implies condition (1) on $\vec{e}$ and $\vec{f}$. 
\item[(3)] must hold for degree reasons. Using Riemann--Roch, one finds that the degree of $L$ is $e_1 + \ldots + e_k + k + g - 1$ and similarly, the degree of $L(\Delta)$ is $f_1 + \ldots + f_k + k + g - 1$. In particular, the difference in these degrees must be the degree of $\Delta$.
\end{enumerate}
For further discussion of Conditions (1) and (2) and speculation on generalizations of these conditions to other families of covers, see Section \ref{discuss}. 
\end{rem}

To unpack the dimension formula in Theorem \ref{maint}, we interpret the last term 
\[\nu(\vec{e},\vec{f}) := h^1(\pp^1, \mathrm{Hom}(\O(\vec{e}), \O(\vec{f})) \otimes (\O \oplus \O(m))\]
as a ``correction factor." When $\nu(\vec{e},\vec{f}) = 0$, the codimension of $U^{\vec{e}, \vec{f}}(C)$ is equal to the sum of the expected codimension for $L$ to have splitting type $\vec{e}$ and the expected codimension for $L(\Delta)$ to have splitting type $\vec{f}$.
Thus, $\nu(\vec{e}, \vec{f})$ measures the failure of the translate of $U^{\vec{e}}(C)$ under tensor product with $\O_C(\Delta)$ to meet $U^{\vec{f}}(C)$ transversally, plus any excess dimension the splitting loci had to begin with.

\begin{rem}
In the special case $\delta = 0$, we have $\vec{e} = \vec{f}$ and the curves in Theorem \ref{maint} are covers of $\mathbb{P}^1$ that live in the total space of the line bundle $\O(m)$. Such curves are also known as \emph{spectral curves}. Using the Hitchin correspondence for spectral curves, recent work of Nollau \cite[Theorem 1.4]{n} observes that, either $U^{\vec{e}}(C)$ is empty or has the dimension claimed in Theorem \ref{maint} for general spectral curves $C$. The key contribution of our present paper is to determine the conditions under which $U^{\vec{e}}(C)$ is non-empty. In the case $\delta = 0$, this is equivalent to determining when the Hitchin fibration is dominant. (See Remark \ref{hitchin} for an explanation of how to recover the Hitchin correspondence as a special case of Wood's parameterization.)
\end{rem}

\begin{rem}
Theorem \ref{planecurves} about smooth plane curves follows from the case $m = 1$ and $\delta = 0$ of Theorem \ref{maint}.
The case $m = 1$ and $\delta = 1$ also provides information about the Brill--Noether theory of smooth plane curves. Indeed, if $p \in C \subset \pp^2$ is any point, then the projection from $p$ is a degree $k-1$ cover $C \to \pp^1$ that factors through $\mathbb{F}_1$ and the distinguished divisor is $\Delta = p$.
\end{rem}

\begin{cor} \label{thecor}
Suppose $C \subset \mathbb{F}_m$ is a smooth curve of genus $g$ meeting the directrix in degree $\delta$. If $U^{\vec{e}}(C)$ is non-empty, then
\begin{equation} \label{iff} \sum_{i=1}^{k-1} \max\{0, e_{i+1} - e_i - m\} \leq \delta.
\end{equation}
Moreover, if the above condition holds, then $U^{\vec{e}}(C)$ is nonempty for a general curve $C$ meeting the directrix in degree $\delta$.
\end{cor}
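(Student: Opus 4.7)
The plan is to deduce Corollary \ref{thecor} from Theorem \ref{maint} via the decomposition
\[U^{\vec{e}}(C) = \bigcup_{\vec{f}} U^{\vec{e},\vec{f}}(C).\]
Thus $U^{\vec{e}}(C)$ is nonempty if and only if some $U^{\vec{e},\vec{f}}(C)$ is, and for general $C$ the latter is governed by conditions (1)--(3) of Theorem \ref{maint}. The corollary therefore reduces to a purely combinatorial statement about the existence of a splitting type $\vec{f}$ satisfying those three conditions.

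For the necessity direction, I would start from any $L \in U^{\vec{e}}(C)$ with $\alpha_*L(\Delta) \cong \O(\vec{f})$ and combine conditions (1) and (2) of Theorem \ref{maint} to obtain $f_i - e_i \geq \max\{0, e_{i+1} - e_i - m\}$ for each $i < k$, while condition (1) alone gives $f_k - e_k \geq 0$. Summing these inequalities and invoking (3) immediately yields \eqref{iff}.

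For the sufficiency, I would explicitly construct a valid $\vec{f}$. Set $f_i := \max\{e_i, e_{i+1} - m\}$ for $i = 1, \ldots, k-1$, which is the componentwise minimum forced by (1) and (2); by hypothesis the partial sum $\sum_{i=1}^{k-1}(f_i - e_i) = \sum_{i=1}^{k-1} \max\{0, e_{i+1}-e_i-m\}$ is at most $\delta$. I would absorb the remaining slack into $f_k := e_k + \bigl(\delta - \sum_{i=1}^{k-1}(f_i - e_i)\bigr)$, so that condition (3) holds exactly. Then (1)--(3) are all satisfied by construction, and Theorem \ref{maint} produces $U^{\vec{e},\vec{f}}(C) \neq \emptyset$ for general $C$.

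The one piece of care required is verifying that the constructed $\vec{f}$ is a genuine splitting type, i.e.\ weakly increasing. Since $m \geq 0$ and $\vec{e}$ is weakly increasing, one has $f_i = \max\{e_i, e_{i+1} - m\} \leq e_{i+1} \leq f_{i+1}$ for $i < k-1$, and $f_{k-1} \leq e_k \leq f_k$. No deeper obstacle arises: all of the substantive geometric content is already packaged in Theorem \ref{maint}, and this corollary is essentially its combinatorial shadow obtained by minimizing $\sum(f_i - e_i)$ over the polytope cut out by (1) and (2).
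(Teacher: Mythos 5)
Your proposal is correct and follows essentially the same route as the paper: necessity by summing the bounds $f_i - e_i \geq \max\{0, e_{i+1}-e_i-m\}$ coming from conditions (1)--(2) and invoking (3), and sufficiency via the same explicit choice $f_i = e_i + \max\{0, e_{i+1}-e_i-m\}$ for $i \leq k-1$ with the remaining slack absorbed into $f_k$. Your extra check that the constructed $\vec{f}$ is weakly increasing is a small detail the paper leaves implicit, and it is verified correctly.
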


We note that if $C \subset \mathbb{F}_m$ meets the directrix in degree $\delta$, then the splitting type of the structure sheaf is 
\begin{equation} \label{so} (-(k-1)m - \delta, -(k-2)m - \delta, \ldots, -m - \delta, 0),
\end{equation}
so \eqref{iff} is sharp.

\subsection{Motivation from Arithmetic Statistics}
Beyond the connection with plane curves, another source of motivation for studying curves on Hirzebruch surfaces comes from arithmetic statistics. The arithmetic analogue of smooth curves on Hirzebruch surfaces are maximal orders obtained by integral binary $k$-ic forms. This is a natural family of number fields that can be worked with explicitly, and thus has been heavily studied in arithmetic statistics. For example, the maximal orders given arising from monic forms are exactly those generated by one element as an algebra; such rings are usually called \emph{monogenic}.  

On the geometric side, we observe that the line bundles on a curve on a Hirzebruch surface behave differently from the line bundles of a general cover of $\PP^1$. Similarly, on the arithmetic side, one can ask if class groups of number fields arising from binary $k$-ic forms behave differently, statistically, from class groups of general number fields. (Of course, one must take into account ordering of the number fields and various other arithmetic difficulties, which we happily ignore in the following discussion). Recent results have shown, amazingly, that the $2$-torsion in the family of number fields arising from binary $k$-ic forms \emph{is} larger on average than the conjectural $2$-torsion in the whole family: see \cite[Theorem 3]{BHS}, \cite[Theorem 1.1]{AS2}, and \cite[Theorem 2]{AS}. This led us to ask if there were notable differences in the Brill-Noether theory of curves on Hirzebruch surfaces as compared to general covers of the projective line. The general question of how the behavior of a family changes when restricted to a special subfamily is an interesting and important one, and this is a natural first case to make this comparison.

Moreover, the splitting type of a line bundle on a cover of $\PP^1$ has been a useful invariant in Hurwitz-Brill-Noether theory \cite{trig, refined,llv,cpj,cpj2}. The splitting type of a line bundle has a natural arithmetic analogue that has played an important role in number theory and dynamics: recall that a fractional ideal $I$ of a degree $k$ number field $L/\QQ$ can be given the structure of a lattice via the Minkowski embedding $L \xhookrightarrow{} \CC^n$. The ``shape'' of this lattice is naturally related to the splitting types of line bundles on curves in the following way. For $1 \leq i \leq k$, let $\lambda_i(I)$ be the smallest positive real number $r$ such that $I$ contains $\geq i$ linearly independent elements of length $\leq r$. Then the $k$-tuple
\[
    (\log \lambda_1(I),\dots,\log \lambda_{k}(I))
\]
should be thought of as analogous to the splitting type $\vec{e}$ of a line bundle on a degree $k$ cover. Let $p_{I}$ be the point in the moduli space of rank $k$ lattices up to isometry corresponding to $I$. When the number field $L$ has a positive rank unit group, it is often fruitful to study the orbit of $p_I$ under the action of the norm $1$ elements of $L_{\RR}^*$; it turns out that these orbits are periodic geodesics on an appropriate Hilbert modular surface. In the case of quadratic extensions and totally real cubic real fields, breakthrough results have shown that these geodesics are equidistributed: see \cite[Theorem 1]{duke} and \cite[Theorem 1.4--1.6]{elmv}, which is analogous to the fact that a general line bundle on a general $k$-cover is balanced. Despite these powerful results, the questions about the periodic geodesics of maximal orders arising from binary $k$-ic remain mysterious. 

Our results suggest that the periodic geodesics arising from fields arising from binary $k$-ic forms have ``different'' behavior from that of a general number field. In particular, one might guess that the lattices are permitted to be more skew and that their cuspidal statistics may differ, since in the geometric setting, our results show that the splitting types of line bundles of curves on Hirzebruch surfaces differ from the splitting types of general covers.

\subsection{Techniques and outline of the paper}
The techniques of this paper are quite different from those in the existing literature on Brill--Noether theory. Typically, proofs in Brill--Noether theory proceed by degenerating to a singular curve, analyzing the limiting behavior, and then using the openness of certain conditions to conclude the general fiber has similar properties. 
After understanding the general fiber,
statements about universal Brill--Noether loci can then be deduced,
possibly after further monodromy arguments.

Here, the situation is reversed. We \emph{start} by building an understanding of the universal Brill--Noether loci $\U^{\vec{e},\vec{f}}$ over the Severi variety $\mathcal{S}$ of smooth curves on Hirzebruch surfaces. 
The main task is then to understand the conditions under which $\U^{\vec{e},\vec{f}}$ dominates $\mathcal{S}$. Once this is determined, no degeneration is required to understand the general fiber: it's dimension is the difference of the dimensions of $\U^{\vec{e},\vec{f}}$ and $\mathcal{S}$. Moreover, in characteristic zero, smoothness of $\U^{\vec{e},\vec{f}}$ implies smoothness of the general fiber.

We now outline the argument in more detail.
Let $\mathcal{S} \subset \pp^N$ be the Severi variety parameterizing smooth, irreducible curves of a fixed linear system on $\mathbb{F}_m$ and let $\mathcal{J} \to \mathcal{S}$ be the relative Picard variety. Each point in $\J$ corresponds to a smooth curve $C \subset \mathbb{F}_m$ together with a line bundle $L$ on $C$.
Let $\U^{\vec{e}, \vec{f}} \subset \J$ be the locally closed subvariety where $L$ has splitting type $\vec{e}$ and $L(\Delta)$ has splitting type $\vec{f}$. By construction, the fiber of $\U^{\vec{e}, \vec{f}} \to \mathcal{S}$ over the point in $\mathcal{S}$ corresponding to $C$ is $U^{\vec{e}, \vec{f}}(C)$.

In Section \ref{wp}, we explain how Wood's parameterization of ideal classes in rings associated to binary $k$-ic forms \cite{Wood} gives rise to a presentation of $\U^{\vec{e}, \vec{f}}$ as a quotient variety.
In our exposition, we give a new perspective on
Wood's parameterization that relates the objects involved to the moduli space of rank $0$, degree $k$ sheaves on $\pp^1$ and results of Str\o mme \cite{Stromme}. 

The key conclusion we extract from Wood's work is that 
$\U^{\vec{e}, \vec{f}} = X^{\vec{e}, \vec{f}}/G^{\vec{e},\vec{f}}$ where $X^{\vec{e}, \vec{f}}$ is an open subset of the affine space $H^0(\pp^1, \Hom(\O(\vec{e}), \O(\vec{f})) \otimes V)$
and 
\[G^{\vec{e},\vec{f}} = (\Aut \O(\vec{e}) \times \Aut\O(\vec{f}))/\gg_m.\]
From this, one readily sees (Corollary \ref{dcor}) that \emph{if it is non-empty}, then $\U^{\vec{e}, \vec{f}}$ is smooth and irreducible of dimension 
\[\dim \U^{\vec{e}, \vec{f}} = \dim \mathcal{S} + g - u(\vec{e}) - u(\vec{f}) + \nu(\vec{e},\vec{f}).\]
However, it is not all all clear when $\U^{\vec{e}, \vec{f}}$ is non-empty! Furthermore, we would like to know under what conditions $\U^{\vec{e}, \vec{f}}$ dominates $\mathcal{S}$.

Fortunately, the map $X^{\vec{e}, \vec{f}} \to \mathcal{S}$ can be made quite explicit. Having fixed a splitting $V \cong \O \oplus \O(m)$, every element of $X^{\vec{e}, \vec{f}}$ can be represented by a pair of $k \times k$ matrices $(A, B)$ whose entries are polynomials on $\pp^1$ of specified degrees 
$\deg A_{ij} = f_j - e_{k + 1 - i}$ and $\deg B_{ij} = f_j - e_{k + 1 - i} + m$. Note that if one of these degrees is zero, we mean the corresponding entry of the matrix vanishes.
When $\vec{e}$ and $\vec{f}$ are more unbalanced, more entries are forced to vanish.
The map $X^{\vec{e},\vec{f}} \to \mathcal{S}$ sends $(A, B)$ to the curve defined by the vanishing of $\det(Ax + By)$. 
If $\vec{e}$ and $\vec{f}$ are too unbalanced, then too many entries are forced to vanish and $\det(Ax + By)$ defines a reducible curve. However, it is possible for some entries to vanish without the curve necessarily being reducible. Proving the dominance of
$X^{\vec{e}, \vec{f}} \to \mathcal{S}$
is rather subtle in these cases and is the topic of Section \ref{dd}.

A standard way to show a map is dominant is by proving the differential is surjective at a well-chosen point. However, locating a suitable point (where the differential is simple enough to understand but not lower rank) is often challenging bordering on impossible! 
Our approach is inspired by this idea, but requires some additional finesse.  One of the key insights is to use the Laplace expansion of the determinant in order to set up an inductive argument.
At certain special points $q$ with many zeros in the bottom row, this allows us to break the tangent space of the source into two pieces:
one that is controlled inductively, and one that is controlled by the bottom row and leftmost column. 
These points $q$ turn out to be too special:
the differential typically drops rank. However, by following a general $1$-parameter family approaching $q$, we find that the flat limits of the images under the differential of the two pieces of our tangent space do span.

In Section \ref{corp}, we prove Corollary \ref{thecor} and Theorem \ref{planecurves}. We conclude with a discussion of splitting types of tensor products of line bundles and a few conjectures in Section \ref{discuss}.

\subsection{Acknowledgements}
We warmly thank Ravi Vakil, Isabel Vogt, and Melanie Wood for valuable conversations related to this work. This research was conducted during the period H.L. served as a Clay Research Fellow. The second author was supported by the National Science Foundation for the duration of this project.

\section{Universal splitting loci via Wood's parameterization}
\label{wp}
\subsection{Notation on the Hirzebruch surface}

Let $V = \O_{\pp^1} \oplus \O_{\pp^1}(m)$ and let $\mathbb{F}_m = \pp V$. Let $\gamma: \pp V \to \pp^1$ be the projection. Let $H = \O_{\pp V}(1)$ be the relative hyperplane class and let $F = \gamma^*\O_{\pp^1}(1)$ be the class of a fiber. The intersection numbers are:
\[H^2 = m, \qquad H \cdot F = 1, \qquad F^2 = 0.\]
The class of the directrix is $D := H - mF$. 
The canonical class is $K_{\mathbb{F}_m} = -2H + (m- 2)F$, and the relative canonical class is
\begin{equation} \label{rc} \omega_\gamma = -H - D.
\end{equation}

For example, if $m = 1$, then $\mathbb{F}_1$ is the blow up of $\pp^2$ at a point $p$. With the notation we have developed, $H$ is the pullback of the class of a line on $\pp^2$, $F$ is the class of the strict transform of a line through $p$, and $D$ is the class of the exceptional divisor of the blow up.

Suppose $\iota: C \hookrightarrow \pp V$ is a curve of class $k H + \delta F$. By adjunction, we have
\[K_C =(K_{\mathbb{F}_m} + C)\vert_C = \iota^*((k - 2) H + (\delta + m - 2) F).\]
Calculating its degree gives the following formula for the genus of $C$
\begin{equation} \label{gf} g(C) = {k \choose 2} m + (k - 1)(\delta - 1).
\end{equation}
Let $\alpha = \gamma \circ \iota: C \to \pp^1$.
Note also that
\begin{equation} \label{oa}
\omega_{\alpha} = K_C \otimes \alpha^*\omega_{\pp^1}^\vee = \iota^*((k - 2)H + (\delta + m)F) = \iota^* \O_{\pp V}(C-D - H).
\end{equation}

\subsection{The moduli space of curves on Hirzebruch surfaces}
Fix nonnegative integers $m, k$ and $\delta$, and let 
\[Y = Y_{m, k, \delta} \subset H^0(\mathbb{F}_m, \O_{\mathbb{F}_m}(kH + \delta F)) = H^0(\pp^1, \Sym^k V \otimes \O_{\pp^1}(\delta))\]
be the open subset of binary forms whose vanishing defines a smooth, irreducible curve inside $\mathbb{F}_m$.
Explicitly, we can write an element of $H^0(\pp^1, \Sym^k V \otimes \O_{\pp^1}(\delta))$ as
\[P(x, y) = P_k(t)x^k + \ldots + P_1(t) y^{k-1}x + 
P_0(t)y^k \]
where $P_i(t) \in H^0(\pp^1, \O_{\pp^1}(\delta + (k-i)m))$ is the coefficient of $x^i$. (For ease of notation, we write the inhomogeneous form $P_i(t)$ instead of the homogeneous form $P_i(s, t)$ for polynomials on $\pp^1$.)
The directrix of $\pp V$ is defined by the vanishing of $y$.

The projectivization of $Y$ is the Severi variety $\mathcal{S} \subset \pp  H^0(\mathbb{F}_m, \O_{\mathbb{F}_m}(kH + \delta F))$ of smooth curves in the given linear system. The universal family of curves over $\mathcal{S}$ sits in a diagram
\begin{equation} \label{uc}
\begin{tikzcd}
\mathcal{C} \arrow{dr} \arrow[hook]{r} & \pp V \times \mathcal{S} \arrow{d} \\
& \mathcal{S}.
\end{tikzcd}
\end{equation}

\subsection{The Picard variety} \label{pv}
Let $\mathcal{J} \to \mathcal{S}$ be the relative Picard variety of this family of curves. 
Each geometric point in $\J$ corresponds to a pair $(C, L)$ where
$C \subset \mathbb{F}_m$ is a smooth curve of class $kH + \delta F$ and $L$ is a line bundle on $C$.  The universal splitting locus $\U^{\vec{e}, \vec{f}} \subset \J$ is the locally closed subvariety of points $(C, L)$ where $\alpha_*L \cong \O(\vec{e})$ and $\alpha_*L(\Delta) \cong \O(\vec{f})$.

\subsection{The relative Picard stack}
Typically, there is not a universal line bundle on the universal curve $\mathcal{C} \times_{\mathcal{S}} \J$ over the relative Picard variety $\J$. 
It is thus useful to also introduce the relative Picard \emph{stack} $\mathscr{J} \to \mathcal{S}$. An object of the stack $\mathscr{J}$ over a scheme $S$ is a pair $(C, L)$
where $C \hookrightarrow \pp V \times S$ is a closed subvariety such that $C \to S$ is a family of smooth curves of class $k H + \delta F$ in each fiber, and $L$ is a line bundle on $C$.  A morphism $(C, L) \to (C', L')$ is an equality of closed subschemes $C = C'$ together with an isomorphism $L \cong L'$.

Let $\mathscr{L}$ be the universal line bundle on $\mathcal{C} \times_{\mathcal{S}} \mathscr{J}$ and let $\alpha: \mathcal{C} \times_{\mathcal{S}} \mathscr{J} \to \pp^1 \times \mathscr{J}$ be the natural degree $k$ map. We have rank $k$ vector bundles $\mathscr{E} := \alpha_*\mathscr{L}$ and $\mathscr{F} := \alpha_*\mathscr{L}$ on $\pp^1 \times \mathscr{J}$.
We define the locally closed substack $\mathscr{U}^{\vec{e},\vec{f}} \subset \mathscr{J}$ to be the intersection of the $\vec{e}$ splitting locus of $\mathscr{E}$ with the $\vec{f}$ splitting locus of $\mathscr{F}$.
We have a natural fiber diagram
\begin{center}
\begin{tikzcd}
\mathscr{U}^{\vec{e},\vec{f}} \arrow{r} \arrow{d} &  \mathscr{J} \arrow{d} \\
\U^{\vec{e},\vec{f}}  \arrow{r} & \J,
\end{tikzcd}
\end{center}
where the vertical maps are $\gg_m$-gerbes.

\subsection{Matrices of linear forms}
Fix splitting types $\vec{e}$ and $\vec{f}$ with $\deg(\vec{f}) - \deg(\vec{e}) = \delta$. Consider the affine space 
\begin{equation} \label{equ} H^0(\pp^1, \Hom(\O(\vec{e}), \O(\vec{f})) \otimes V) = H^0(\pp V, \gamma^*\Hom(\O(\vec{e}), \O(\vec{f})) \otimes \O_{\pp V}(1)).
\end{equation}
Having fixed a splitting $V = \O \oplus \O(m)$, we can represent an element of the left-hand side by a pair
\[(A, B) \in H^0(\pp^1, \Hom(\O(\vec{e}), \O(\vec{f}))) \oplus H^0(\pp^1, \Hom(\O(\vec{e}), \O(\vec{f}))(m)). \]
Then, we represent each of $A$ and $B$ with $k\times k$ matrices whose entries $A_{ij}$ and $B_{ij}$ are polynomials of a specified degree on $\pp^1$:
\begin{align} A_{ij} \in H^0(\pp^1, \O_{\pp^1}(a_{i,j})) \qquad &\text{and} \qquad B_{ij} \in H^0(\pp^1, \O_{\pp^1}(b_{i,j})),
\intertext{where}
a_{i,j} = f_i - e_{k + 1 - j} \qquad &\text{and} \qquad b_{i,j} = f_i - e_{k + 1 - j} +m. \label{aij}
\end{align}
Notice our choice of indexing is so that $a_{i,j}$ and $b_{i,j}$ are increasing as $i$ and $j$ increase.

When we think of $(A, B)$ as an element of the right-hand side of \eqref{equ}, we write it as $Ax + By$. 
There is a natural map
\begin{equation} \label{phidef} \phi^k: H^0(\pp^1, \Hom(\O(\vec{e}), \O(\vec{f})) \otimes V)  \to H^0(\pp^1, \Sym^k V \otimes \O_{\pp^1}(\delta))
\end{equation}
given by
\[(A, B) \mapsto \det(Ax + By)\]
We define $X^{\vec{e}, \vec{f}} \subset H^0(\pp^1, \Hom(\O(\vec{e}), \O(\vec{f})) \otimes V)$ to be the preimage of $Y$ under $\phi^k$.

Notice that there is a natural action of $\Aut \O(\vec{e}) \times \Aut \O(\vec{f})$ on $X^{\vec{e}, \vec{f}}$ given by 
\[(g_1, g_2) \cdot (A, B) = (g_2 A g_1^{-1}, g_2 B g_1^{-1}).\]
The diagonal subgroup $\gg_m \hookrightarrow \Aut \O(\vec{e}) \times \Aut \O(\vec{f})$ given by $t \mapsto (tI, tI)$ clearly acts trivially. Let $G^{\vec{e}, \vec{f}} = (\Aut \O(\vec{e}) \times \Aut \O(\vec{f}))/\gg_m$ be the quotient by this subgroup.
The  map $X^{\vec{e}, \vec{f}} \to \mathcal{S}$ is equivariant with respect to the $G^{\vec{e},\vec{f}}$-action on $X^{\vec{e},\vec{f}}$, and therefore factors through the quotient $X^{\vec{e}, \vec{f}}/G^{\vec{e},\vec{f}}$. We will show that this quotient is in fact $\U^{\vec{e},\vec{f}}$, so that we have a commuting diagram 
\begin{equation} \label{com}
\begin{tikzcd}
X^{\vec{e}, \vec{f}} \arrow{d} \arrow{r} & \U^{\vec{e}, \vec{f}} \arrow{d} \\
Y \arrow{r} & \S,
\end{tikzcd}
\end{equation}
where the top row is a $G^{\vec{e},\vec{f}}$-torsor. (Note that the bottom row is a $\gg_m$-torsor.)

This is essentially a consequence of Wood's parameterization of ideal classes in rings associated to binary forms \cite[Theorem 1.4]{Wood}, which holds over any base.
For the sake of completeness, we give a self-contained proof here in the next subsection. 

\subsection{Str{\o}mme sequences} \label{ss}
In this section, we give a new perspective on Wood's parameterization by relating the structures involved to the moduli space of rank $0$ degree $k$ subsheaves on $\pp^1$ and results of Str\o mme.
Our goal is to establish the diagram \eqref{com}.
In fact, we will show that there is an equivalence of stacks
\[\mathscr{U}^{\vec{e},\vec{f}} \cong [X^{\vec{e},\vec{f}}/\Aut \O(\vec{e}) \times \Aut \O(\vec{f})].\]
This then implies that that their $\gg_m$-rigidifications $\U^{\vec{e},\vec{f}}$ and $X^{\vec{e},\vec{f}}/G^{\vec{e},\vec{f}}$ are equivalent.

Given an object $(C, L)$ of the relative Picard stack $\mathscr{J}$ over a scheme $S$, we have a diagram 
\begin{center}
\begin{tikzcd}
C \arrow[hook]{r}{\iota} \arrow{dr}[swap]{\alpha} & \pp V \times S \arrow{d}{\gamma} \\
& \pp^1 \times S.
\end{tikzcd}
\end{center}
Let $M = L(\Delta)$ and consider the sheaf $\iota_*M$ on $\pp V \times S$. 
Since $R^1\gamma_*(\iota_*M) = 0$, by \cite[Proposition 1.1]{Stromme}, there is a short exact sequence of sheaves on $\pp V \times S$
\begin{equation} \label{Stromme} 0 \rightarrow \gamma^*(\gamma_*(\iota_*M \otimes \O_{\pp V}(-1)) \otimes \det V)(-1) \rightarrow \gamma^*(\gamma_* \iota_* M) \rightarrow \iota_* M \rightarrow 0.
\end{equation}
This sequence is functorial in $\iota_*M$ and its formation commutes with arbitrary base change. We call it the ``Str\o mme sequence of $\iota_*M$."
\begin{rem} The original \cite[Proposition 1.1]{Stromme} is stated for trivial $\pp^1$-bundles. For non-trivial $\pp^1$-bundles, we must include a factor of the determinant in the left-hand term as in \cite[Lemma 3.1]{degen}. (To account for the missing dual, note that \cite{degen} used the subspace convention whereas we are using the quotient convention with regards to $\pp V$.)
\end{rem}

Let us set
\begin{align*} 
E &:= \gamma_*(\iota_*M \otimes \O_{\pp V}(-1)) \otimes \det V = \gamma_*\iota_*(M \otimes \iota^*(\O_{\pp V}(-1) \otimes \gamma^* \det V)) \\
&= \alpha_*M(-\Delta) = \alpha_*L \\
\intertext{and}
F &:= \gamma_* \iota_*M = \alpha_* M
\end{align*}
so that \eqref{Stromme} becomes
\begin{equation} \label{us}
0 \rightarrow (\gamma^*E)(-1) \rightarrow \gamma^* F \rightarrow \iota_* M \rightarrow 0.    
\end{equation}

\begin{rem}
On each fiber of $\gamma$, the bundle $F$ is the space of global sections of $\iota_*M$ restricted to that fiber. One can therefore think of $(\gamma^* E)(-1)$ as a sort of ``relative version" of a Lazarsfeld--Mukai bundle.
\end{rem}

The map $(\gamma^*E)(-1) \to \gamma^* F$ in \eqref{us} can be represented by a family of matrices of linear forms
\[Ax + By \in H^0(\pp V \times S, \gamma^*\Hom(E, F) \otimes \O_{\pp V}(1)).\]
The vanishing of 
\[\det(Ax + By) \in H^0(\pp V \times S, \gamma^*((\det E)^\vee \otimes \det F) \otimes \O_{\pp V}(k))\]
is the support of $\iota_* M$, namely the curve $C$. Notice that $(\det E)^\vee \otimes \det F$ must have degree $\delta$ on each fiber of $\pp^1 \times S \to S$. This can also be seen directly from the definitions of $E$ and $F$, using Riemann--Roch on each fiber as in Remark \ref{nec}(3).

The association of $(C, L)$ with $(E, F, Ax + By)$ gives rise to a natural equivalence of $\mathscr{J}$ with a stack $\mathscr{J}'$, which we now define.
An object of $\mathscr{J}'$ over a scheme $S$ is a triple $(E, F, Ax + By)$ where
\begin{enumerate}
    \item $E$ and $F$ are vector bundles of rank $k$ on $\pp^1 \times S$ such that $(\det E)^\vee \otimes \det F$ has degree $\delta$ on the fibers of $\pp^1 \times S \to S$
    \item $Ax + By$ is a global section
    \begin{align*} Ax + By &\in H^0(\pp V \times S, \gamma^*\Hom( E, F) \otimes \O_{\pp V}(1)))
    \end{align*}
    such that the projection of $V(\det(Ax + By)) \subset S \times \pp V$ to $S$ is a family of smooth curves in $\pp V$ (which necessarily have class $kH + \delta F$ in each fiber by (1)).
\end{enumerate}
A morphism $(E, F, Ax+ By) \to (E', F', A'x+ B'y)$ of objects in $\mathscr{J}'$ over $S$ is the data of isomorphisms of vector bundles $\xi_1 : E \to E'$ and $\xi_2: F \to F'$ such that the following diagram commutes
\begin{center}
\begin{tikzcd}
(\gamma^*E) \otimes \O_{\pp V}(-1) \arrow{rr}{Ax + By} \arrow{d}[swap]{\gamma^*\xi_1 \otimes \mathrm{id}} && \gamma^* F \arrow{d}{\gamma^*\xi_2} \\
(\gamma^*E') \otimes \O_{\pp V}(-1) \arrow{rr}[swap]{A'x + B'y} && \gamma^* F'.
\end{tikzcd}
\end{center}
Functoriality of the Str\o mme sequence implies there is a well-defined functor from $\mathscr{J}$ to $\mathscr{J}'$ sending $(C, L)$
to $(E, F, Ax + By)$. The next lemma helps us towards defining an inverse.

\begin{lem} \label{ml}
Suppose $(E, F, Ax + By)$ is an object of $\mathscr{J}'$. Let 
\[C = V(\det(Ax + By)) \xhookrightarrow{\iota} S \times \pp V.\]
Then $Ax + By$ has rank exactly $k - 1$ at each point of $C$. Hence, there is some line bundle $M$ on $C$ such that
\begin{equation} \label{mss} 0 \rightarrow (\gamma^* E)(-1) \xrightarrow{Ax + By} \gamma^* F \rightarrow \iota_*M \rightarrow 0
\end{equation}
is exact and is the Str\o mme sequence of $\iota_*M$. In particular $(C, M(-\Delta))$ defines an object of $\mathscr{J}$ which is sent to $(E, F, Ax + By)$.
\end{lem}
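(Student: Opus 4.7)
\emph{Proof plan.} My plan is to establish the three assertions of Lemma \ref{ml} in sequence.

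I would begin with the rank claim. Since the statement is fiberwise in $S$, I reduce to $S = \Spec k$ and argue by contradiction. Suppose $\rank(Ax + By) \leq k-2$ at some point $p \in C$. Then every $(k-1) \times (k-1)$ minor of $Ax + By$ vanishes at $p$. By Laplace expansion, these minors are (up to sign) exactly the partial derivatives $\partial \det / \partial M_{ij}$ of the determinant with respect to the matrix entries, so the chain rule forces every derivative of the local defining function $\det(Ax + By)$ of $C$ in the $\PP V$-directions to vanish at $p$ as well. Combined with $\det(Ax + By)(p) = 0$, this makes $C$ singular at $p$, contradicting smoothness.

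Next, to construct $M$, I set $\mathcal{G} := \coker(Ax + By)$. The hypothesis that $V(\det(Ax+By))$ is a family of smooth curves ensures $\det(Ax + By)$ is not identically zero on any fiber, so $Ax + By$ is generically injective; since it is a map between locally free sheaves of equal rank on a reduced base, this makes it injective, yielding the exactness in \eqref{mss}. The scheme-theoretic support of $\mathcal{G}$ is $C$ (its $0$-th Fitting ideal is generated by $\det(Ax + By)$), and the rank claim shows its fiber has dimension $1$ at each point of $C$. A standard local argument — reducing $Ax + By$ near any $p \in C$ by invertible row and column operations over $\mathcal{O}_{\PP V \times S, p}$ to a matrix that is the identity in the top-left $(k-1) \times (k-1)$ block, with only the determinant in the $(k,k)$-entry — identifies $\mathcal{G}$ with $\iota_* M$ for a line bundle $M$ on $C$.

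Finally, to verify that \eqref{mss} recovers the Strømme sequence of $\iota_* M$, I check the two defining identities $\gamma_* \iota_* M = F$ and $\gamma_*(\iota_* M \otimes \mathcal{O}_{\PP V}(-1)) \otimes \det V = E$. The first follows by applying $\gamma_*$ to \eqref{mss} and using $\gamma_* \mathcal{O}_{\PP V}(-1) = R^1 \gamma_* \mathcal{O}_{\PP V}(-1) = 0$. The second follows by twisting \eqref{mss} by $\mathcal{O}_{\PP V}(-1)$, pushing down, and applying the identification $R^1 \gamma_* \mathcal{O}_{\PP V}(-2) \cong (\det V)^{-1}$ (coming from relative Serre duality $\omega_\gamma \cong \mathcal{O}_{\PP V}(-2) \otimes \gamma^* \det V$). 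Then $(C, M(-\Delta))$ defines an object of $\mathscr{J}$ and, via the functor $\mathscr{J} \to \mathscr{J}'$, is sent precisely to $(E, F, Ax + By)$.

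The step I anticipate being most delicate is the middle one: promoting the pointwise rank-$(k-1)$ condition to a genuine line bundle structure on $\mathcal{G}|_C$. The argument is standard in spirit, but one must exploit both the explicit two-term resolution and the constant-rank degeneracy structure of $Ax + By$ — a pointwise fiber-dimension hypothesis alone would not suffice.
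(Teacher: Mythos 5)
Your proposal is correct and takes essentially the same route as the paper: your cofactor/chain-rule argument for the rank claim is exactly the paper's observation that the determinantal hypersurface in $\pp\Hom(E,F)$ is singular where the rank drops below $k-1$, your local row/column reduction fills in the paper's assertion that the cokernel is $\iota_*M$ for a line bundle $M$ on $C$, and your identification of $E$ by twisting by $\O_{\pp V}(-1)$ and using $R^1\gamma_*\O_{\pp V}(-2)\cong(\det V)^{-1}$ is equivalent to the paper's twist by $\O_{\pp V}(-D)$. One small caveat: the base scheme $S$ of an object of $\mathscr{J}'$ need not be reduced, so for injectivity replace the ``reduced base'' appeal by the observation that $\det(Ax+By)$ is a nonzerodivisor (its vanishing locus is an $S$-flat family of curves, nonzero on each smooth fiber) and conclude via the adjugate identity, which works over any base.
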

\begin{proof}
The matrix $Ax + By$ defines a map of $\pp V \times S$ to the projective bundle $\pp \Hom(E, F)$ over $S$. The fiber of this projective bundle over each point in $S$ is a projective space $\pp^{k^2 - 1}$ of $k \times k$ matrices.
The subvariety $C$ is the preimage of the determinant hypersurface inside this projective bundle. It is well-known that the determinant hypersurface is singular along the locus of matrices having rank $< k - 1$, i.e. the tangent space to the hypersurfaces at such a point is all of $\pp^{k^2 - 1}$. If $\pp V$ meets the locus of matrices  having rank $< k - 1$, then the tangent space to the fiber of $C \to S$ at such a point $p$ would be all of $\pp V$, contradicting the fact that $C \to S$ is a family of smooth curves. Thus, $Ax + By$ has rank exactly $k - 1$ at each point of $C$, and we conclude that the cokernel is $\iota_*M$ for a locally free sheaf $M$ on $C$.

To see that \eqref{mss} is the Str\o mme sequence of $\iota_*M$, we apply $\gamma_*$. 
By cohomology and base change, $\gamma_*(\gamma^*E)(-1) = R^1\gamma_*(\gamma^*E)(-1) = 0$. Thus,
when we pushforward \eqref{mss} along $\gamma$, we obtain an isomorphism $F \cong \gamma_*\iota_*M = \alpha_*M$, where $\alpha = \gamma \circ \iota: C \to \pp^1 \times S$ is the natural degree $k$ cover.
To identify $E$, we take the tensor product of \eqref{mss} with $\O_{\pp V}(-D)$ and push forward by $\gamma_*$. 
The pushforward and higher pushforward of the middle term in the resulting sequence vanishes,  so we obtain an isomorphism of $\alpha_* M(-\Delta) = \gamma_*\iota_*(M(-\Delta)) = \gamma_*(\iota_*M)(-D)$ with 
\[R^1\gamma_*(\gamma^* E)(-H - D) = E \otimes R^1\gamma_* \omega_{\gamma} = E.\]
Above, the first equality follows from the projection formula and \eqref{rc}, while the second equality follows from relative Serre duality.
\end{proof}

\begin{lem} \label{eq}
There is an equivalence of stacks $\mathscr{J} \cong \mathscr{J}'$. 
\end{lem}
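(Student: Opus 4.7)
The plan is to exhibit quasi-inverse functors $\Phi: \mathscr{J} \to \mathscr{J}'$ and $\Psi: \mathscr{J}' \to \mathscr{J}$. Both are essentially constructed already: $\Phi$ sends $(C, L)$ to $(\alpha_*L,\, \alpha_* L(\Delta),\, Ax + By)$ where $Ax + By$ is the map appearing in the Str\o mme sequence \eqref{us} applied to $M = L(\Delta)$, while $\Psi$ is the association $(E, F, Ax + By) \mapsto (C, M(-\Delta))$ supplied by Lemma \ref{ml}. The task reduces to (i) extending these assignments to morphisms and checking compatibility with pullback along $S$, and (ii) producing natural isomorphisms $\Psi \Phi \cong \mathrm{id}_{\mathscr{J}}$ and $\Phi \Psi \cong \mathrm{id}_{\mathscr{J}'}$.

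For (i), on morphisms $\Phi$ sends an isomorphism $L \to L'$ of line bundles on $C$ to the induced isomorphism $\iota_* L(\Delta) \to \iota_* L'(\Delta)$; functoriality of the Str\o mme sequence then yields a canonical pair $(\xi_1, \xi_2) = (\alpha_*(L \to L'),\, \alpha_*(L(\Delta) \to L'(\Delta)))$ fitting into the required commuting square. In the other direction, given $(\xi_1, \xi_2)$ in $\mathscr{J}'$, the commuting square forces $\det(Ax+By)$ and $\det(A'x+B'y)$ to agree up to the unit $\det(\xi_2) \cdot \det(\xi_1)^{-1}$, so their zero loci coincide scheme-theoretically in $\pp V \times S$ and $C = C'$; the pair $(\xi_1, \xi_2)$ then induces an isomorphism of cokernels $\iota_* M \cong \iota_* M'$ over the common $C$, hence $M \cong M'$ and finally $M(-\Delta) \cong M'(-\Delta)$. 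Compatibility with base change on $S$ for both functors is recorded after \eqref{Stromme}.

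For (ii), both natural isomorphisms are tautological. Starting from $(C, L)$, the composite $\Psi \Phi$ recovers the support and cokernel of the Str\o mme sequence of $\iota_* L(\Delta)$, which are $C$ and $\iota_* L(\Delta)$ respectively, giving back $(C, L(\Delta)(-\Delta)) = (C, L)$. Starting from $(E, F, Ax + By)$, Lemma \ref{ml} asserts that \eqref{mss} is already the Str\o mme sequence of its own cokernel, so $\Phi \Psi$ returns the same triple canonically. I expect the only real obstacle to be the bookkeeping in step (i): verifying that all of the canonical isomorphisms assembled are natural in the correct $2$-categorical sense and truly compatible with base change, so as to yield an equivalence of stacks rather than just an equivalence of fibers. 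Once this is in hand, passing to $\gg_m$-rigidifications immediately yields the desired equivalence $\U^{\vec{e},\vec{f}} \cong X^{\vec{e},\vec{f}}/G^{\vec{e},\vec{f}}$ at the level of coarse moduli, completing diagram \eqref{com}.
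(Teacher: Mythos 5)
Your proposal is correct and follows essentially the same route as the paper: one functor comes from the Str\o mme sequence of $\iota_*L(\Delta)$, the inverse comes from Lemma \ref{ml}, and both composites are tautologically the identity. The extra detail you supply (morphisms in $\mathscr{J}'$ forcing $C = C'$ via determinants, and base-change compatibility) is exactly the bookkeeping the paper compresses into ``functoriality of the Str\o mme sequence.''
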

\begin{proof}
In light of Lemma \ref{ml}, we can define a functor from $\mathscr{J}'$ to $\mathscr{J}$ that sends the triple $(E, F, Ax + By)$ to $(C = V(\det(Ax + By)), L = \iota^* \coker(Ax + By)(-\Delta))$. The composition $\mathscr{J} \to \mathscr{J}' \to \mathscr{J}$ first sends $(C, L)$ to the Str\o mme sequence of $\iota_*L(\Delta)$ and then back to the support and cokernel of the sequence twisted down by $\Delta$, so is evidently the identity. Meanwhile, the composition $\mathscr{J}' \to \mathscr{J} \to \mathscr{J}'$ sends a Str\o mme sequence to data equivalent to its cokernel, and then recovers the Str\o mme sequence from it.
\end{proof}

\begin{rem}[Translation with Wood]
If $L$ is a line bundle on $C \subset \pp V \times S$, then the pair of line bundles $(L, L^\vee \otimes \omega_{\alpha}(-\Delta))$ is what Wood calls a ``balanced pair" of $\O_C$-modules.
To translate between our notation and Wood's, we first explain why Wood's line bundle $\O_{T_f}(n - 3) \otimes (\wedge^2 V)^{\otimes 2} \otimes L$ in \cite[p. 187]{Wood} ($I_f$ viewed as an $R_f$-module)  is isomorphic to $\omega_{\alpha}(-\Delta)$. This is because Wood's $\O_{T_f}(n) \otimes L$ is our $\O_C(C)$, while $\wedge^2V = \O(mF)$ and $\O_{T_f}(-3) = \O(-3H)$. Note also that $\omega_{\gamma} = \O(-2H + mF)$, and by adjunction, $\omega_\alpha = \omega_\gamma(C)$.
Thus, the line bundle that pushes forward to Wood's $I_f$ is 
\[\O_C(C - 3H + 2mF) = \omega_\gamma(C) \otimes \O(-H + mF) = \omega_\alpha \otimes \O(-\Delta).\]
Hence the tensor product of the pair $(L, L^\vee \otimes \omega_{\alpha}(-\Delta))$ pushes forward to $I_f$.
Viewed as $\O_{\pp^1}$-modules, these become $\alpha_*L = E$ and 
\[\alpha_*(L^\vee \otimes \omega_{\alpha}(-\Delta)) = \alpha_*(\omega_{\alpha} \otimes L(\Delta)^\vee) = (\alpha_* L(\Delta))^\vee = F^\vee.\]
Wood's ``balancing map" is a map $E \otimes F^\vee \to V$, which is equivalent to the data of the map $\gamma^*E \otimes \O_{\pp V}(-1) \to \gamma^* F$ in the Str\o mme sequence.
\end{rem}

\begin{rem} \label{hitchin}
The special case $\delta = 0$ of Wood's parameterization is closely related to spectral curves.
We explain the situation over the base curve $\pp^1$, but Wood's parameterization is valid over any base, and replacing $\pp^1$ with an arbitrary curve $X$ recovers
\cite[Proposition 3.6]{BNR} about the moduli space of torsion-free sheaves on spectral curves.

When $\delta = 0$, our curves do not meet the directrix of the Hirzebruch surface, and so
are contained in the total space of the line bundle $\O(m)$ over $\pp^1$.
Our family of curves in $S \times \pp V$ is given by the vanishing of $\det(Ax + By)$. The coefficient of $x^k$ is $\det A$, so to obtain an integral curve in each fiber, we must have $\det A \neq 0$.
When $\delta = 0$, we have $\det A \in H^0(S \times \pp^1, \O)$ is non-vanishing. Geometrically, this tells us that the curve does not meet the directrix (which is the curve defined by $y = 0$). 
In particular, we can view the matrix $A$ as providing an isomorphism of $E$ with $F$.
Then, $B \in H^0(\pp^1, \End(E)(m))$ gives a family of endomorphisms of $E$. Working on the complement of the directrix $y \neq 0$, we can define a relative coordinate $t = -x/y$ on the total space of $\O(m)$ and our curve is then given
by the vanishing of 
$\det(t \mathrm{Id} - B)$. In other words, the curve is the spectral curve of the family of endomorphisms defined by $B$.
\end{rem}

\subsection{Conclusion for splitting loci}
We now restrict the equivalence of stacks in Lemma \ref{eq} to the substack $\mathscr{U}^{\vec{e},\vec{f}}$ to obtain the following description.

\begin{lem}
There is an equivalence of stacks $\mathscr{U}^{\vec{e},\vec{f}} \cong [X^{\vec{e},\vec{f}}/\Aut \O(\vec{e}) \times \Aut \O(\vec{f})]$. Consequently, there is an isomorphism of the $\gg_m$-rigidifications $\U^{\vec{e},\vec{f}} \cong X^{\vec{e},\vec{f}}/G^{\vec{e},\vec{f}}$.
\end{lem}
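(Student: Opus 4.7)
The plan is to leverage the equivalence of stacks $\mathscr{J} \cong \mathscr{J}'$ established in Lemma \ref{eq}. Under this equivalence, the substack $\mathscr{U}^{\vec{e},\vec{f}}$ corresponds to the locally closed substack of triples $(E, F, Ax + By)$ in $\mathscr{J}'$ where the fiberwise splitting types of $E$ and $F$ are $\vec{e}$ and $\vec{f}$, respectively (recall $E = \alpha_*L$ and $F = \alpha_*L(\Delta)$). The task thus reduces to identifying this substack with $[X^{\vec{e},\vec{f}}/\Aut\O(\vec{e}) \times \Aut\O(\vec{f})]$.

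First I construct a morphism $X^{\vec{e},\vec{f}} \to \mathscr{U}^{\vec{e},\vec{f}}$ by sending $(A,B)$ to the tautological triple $(\O(\vec{e}), \O(\vec{f}), Ax + By)$. Lemma \ref{ml} guarantees this is an object of $\mathscr{J}'$, and the splitting types are correct by construction. The group $\Aut\O(\vec{e}) \times \Aut\O(\vec{f})$ acts via $(g_1, g_2) \cdot (A, B) = (g_2 A g_1^{-1}, g_2 B g_1^{-1})$, and by the definition of morphisms in $\mathscr{J}'$, two elements of $X^{\vec{e},\vec{f}}$ determine isomorphic triples precisely when they lie in the same orbit.

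To upgrade this to an equivalence of stacks, I need to show that any object $(E, F, Ax+By)$ of $\mathscr{U}^{\vec{e},\vec{f}}$ over a scheme $S$ lies étale-locally on $S$ in the image of $X^{\vec{e},\vec{f}}$. Since $E$ and $F$ have constant fiberwise splitting type, the sheaves of isomorphisms from the pullbacks of $\O(\vec{e})$ and $\O(\vec{f})$ onto $E$ and $F$ are torsors for the smooth group schemes $\Aut\O(\vec{e})$ and $\Aut\O(\vec{f})$, and hence admit sections étale-locally on $S$. Each such section supplies a trivialization realizing the triple as the image of a point of $X^{\vec{e},\vec{f}}$, and the ambiguity among trivializations is precisely the $\Aut\O(\vec{e}) \times \Aut\O(\vec{f})$-action from the previous paragraph. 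This exhibits $X^{\vec{e},\vec{f}} \to \mathscr{U}^{\vec{e},\vec{f}}$ as an $\Aut\O(\vec{e}) \times \Aut\O(\vec{f})$-torsor, establishing the first equivalence. For the rigidification, the diagonal $\gg_m$ acts trivially on $X^{\vec{e},\vec{f}}$ and is the inertia of the $\gg_m$-gerbe $\mathscr{U}^{\vec{e},\vec{f}} \to \U^{\vec{e},\vec{f}}$ (it corresponds to the scalar automorphisms of $L$); rigidifying both sides yields $\U^{\vec{e},\vec{f}} \cong X^{\vec{e},\vec{f}}/G^{\vec{e},\vec{f}}$.

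The main obstacle is verifying the étale-local trivialization claim: a family of vector bundles on $\pp^1 \times S$ of constant splitting type is étale-locally isomorphic to the pullback of its splitting bundle. Although standard, this is the essential representability input that makes the quotient-stack presentation accurate; without it one would only obtain an equivalence on geometric points rather than an equivalence of stacks.
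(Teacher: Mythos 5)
Your proposal is correct and follows essentially the same route as the paper: restrict the equivalence $\mathscr{J} \cong \mathscr{J}'$ of Lemma \ref{eq} to identify $\mathscr{U}^{\vec{e},\vec{f}}$ with the substack of triples whose bundles have constant fiberwise splitting types $\vec{e},\vec{f}$, recognize that substack as $[X^{\vec{e},\vec{f}}/\Aut\O(\vec{e})\times\Aut\O(\vec{f})]$, and then rigidify along the diagonal $\gg_m$ (scaling $L$). The only difference is that you make explicit the local-trivialization fact (a family on $\pp^1\times S$ of constant splitting type is locally on $S$ isomorphic to the pullback of $\O(\vec{e})$, in fact Zariski-locally by cohomology and base change), which the paper leaves implicit in identifying the quotient stack with that substack of $\mathscr{J}'$.
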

\begin{proof}
By definition, $X^{\vec{e},\vec{f}} \subset H^0(\pp^1, \Hom(\O(\vec{e}), \O(\vec{f})) \otimes V)$ is the open subset consisting of $(A, B)$ such that $V(\det(Ax + By))$ is a smooth curve. Thus, the quotient stack $[X^{\vec{e},\vec{f}}/\Aut \O(\vec{e}) \times \Aut \O(\vec{f})]$ is precisely the stack of triples $(E, F, Ax + By) \in \mathscr{J}'$ where $E$ and $F$ have splitting type $\vec{e}$ and $\vec{f}$ on every fiber of $\pp^1 \times S \to S$. The equivalence of stacks in Lemma \ref{eq} gives an equivalence of this substack of
$\mathscr{J}'$ with $\mathscr{U}^{\vec{e},\vec{f}} \subset \mathscr{J}$.

For the second claim, note that the subgroup $\mathbb{G}_m \hookrightarrow \Aut \O(\vec{e}) \times \Aut \O(\vec{f})$ defined by $t \mapsto (tI, tI)$ corresponds to scaling the fibers of $E$ and $F$ equally. This in turn corresponds to scaling the fibers of $L$, and accounts for a $\gg_m$ stabilizer at each point of $\mathscr{U}^{\vec{e},\vec{f}}$. 
The variety $\mathcal{U}^{\vec{e},\vec{f}}$ is the rigidification of $\mathscr{U}^{\vec{e},\vec{f}}$ with respect to this $\gg_m$ action. As $G^{\vec{e},\vec{f}}$ is the quotient of $\Aut \O(\vec{e}) \times \Aut \O(\vec{f})$ by this copy of $\gg_m$, the claim for $\U^{\vec{e},\vec{f}}$ follows.
\end{proof}

\begin{cor} \label{dcor}
If non-empty, the variety $\U^{\vec{e}, \vec{f}}$ is smooth and irreducible of dimension
\[\dim \U^{\vec{e}, \vec{f}} = \dim \S + g - u(\vec{e}) - u(\vec{f}) + \nu(\vec{e},\vec{f}). \]
\end{cor}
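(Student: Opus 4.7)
The plan is to directly exploit the presentation $\U^{\vec{e},\vec{f}} \cong X^{\vec{e},\vec{f}}/G^{\vec{e},\vec{f}}$ from the previous lemma. All three conclusions---smoothness, irreducibility, and the dimension formula---will follow from understanding $X^{\vec{e},\vec{f}}$ as a smooth irreducible variety on which $G^{\vec{e},\vec{f}}$ acts freely, together with a routine Riemann--Roch bookkeeping on $\pp^1$.

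First I would note that $X^{\vec{e},\vec{f}}$ is a nonempty open subvariety of the affine space $H^0(\pp^1, \Hom(\O(\vec{e}), \O(\vec{f})) \otimes V)$---nonemptiness follows from the standing assumption that $\U^{\vec{e},\vec{f}}$ is nonempty---hence smooth and irreducible. Next, $G^{\vec{e},\vec{f}}$ is connected: each of $\Aut \O(\vec{e})$ and $\Aut \O(\vec{f})$ is the open locus of invertible endomorphisms inside the corresponding affine space of global endomorphisms, hence irreducible, so their product and its quotient by the central $\gg_m$ are connected. For freeness, observe that the central $\gg_m \subset \Aut \O(\vec{e}) \times \Aut \O(\vec{f})$ acts trivially on $X^{\vec{e},\vec{f}}$ by direct inspection of the conjugation formula, while the stacky equivalence $\mathscr{U}^{\vec{e},\vec{f}} \cong [X^{\vec{e},\vec{f}}/\Aut \O(\vec{e}) \times \Aut \O(\vec{f})]$, combined with the fact that $\mathscr{U}^{\vec{e},\vec{f}} \to \U^{\vec{e},\vec{f}}$ is a $\gg_m$-gerbe, forces the full stabilizer to equal precisely this central $\gg_m$. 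Consequently $G^{\vec{e},\vec{f}}$ acts freely, the quotient is a smooth irreducible variety, and
\[\dim \U^{\vec{e},\vec{f}} = \dim X^{\vec{e},\vec{f}} - \dim G^{\vec{e},\vec{f}}.\]

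It then remains to identify this difference with $\dim \S + g - u(\vec{e}) - u(\vec{f}) + \nu(\vec{e},\vec{f})$ via Riemann--Roch on $\pp^1$. The bundle $\Hom(\O(\vec{e}), \O(\vec{f})) \otimes V$ has rank $2k^2$ and degree $2k\delta + k^2 m$, so
\[\dim X^{\vec{e},\vec{f}} = \chi + h^1 = \bigl(2k\delta + k^2 m + 2k^2\bigr) + \nu(\vec{e},\vec{f}).\]
Similarly, since $\End \O(\vec{e})$ has rank $k^2$ and degree $0$ with $h^1 = u(\vec{e})$,
\[\dim G^{\vec{e},\vec{f}} = h^0(\End \O(\vec{e})) + h^0(\End \O(\vec{f})) - 1 = 2k^2 + u(\vec{e}) + u(\vec{f}) - 1.\]
Subtracting yields $\dim \U^{\vec{e},\vec{f}} = mk^2 + 2k\delta + 1 - u(\vec{e}) - u(\vec{f}) + \nu(\vec{e},\vec{f})$. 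To finish, one checks $\dim \S + g = mk^2 + 2k\delta + 1$, which is immediate from \eqref{gf} and the formula $\dim \S = h^0(\pp^1, \Sym^k V \otimes \O(\delta)) - 1 = \sum_{i=0}^k (im + \delta + 1) - 1$.

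I do not foresee any genuine obstacle here: the preceding lemma has already done all the structural work, and what remains is essentially a formal dimension count. The truly hard part of the paper---verifying when the nonemptiness hypothesis of this corollary is actually satisfied for a general curve $C$, i.e.\ when $X^{\vec{e},\vec{f}} \to Y$ is dominant---is the content of the later sections.
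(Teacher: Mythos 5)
Your proposal is correct and follows essentially the same route as the paper: use the $G^{\vec{e},\vec{f}}$-torsor $X^{\vec{e},\vec{f}} \to \U^{\vec{e},\vec{f}}$ to transfer smoothness and irreducibility from the open subset of affine space, then compute $\dim X^{\vec{e},\vec{f}} - \dim G^{\vec{e},\vec{f}}$ by Riemann--Roch and compare with $\dim \S + g$. The extra remarks on connectedness of $G^{\vec{e},\vec{f}}$ and freeness via the gerbe are fine but not needed beyond what the torsor statement already provides.
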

\begin{proof}
Assume that $X^{\vec{e},\vec{f}}$ is non-empty. Then, its dimension is
\begin{align*} \dim X^{\vec{e},\vec{f}} &= h^0(\pp^1, \Hom(\O(\vec{e},\O(\vec{f})) \otimes V) \\
& = \chi (\pp^1, \Hom(\O(\vec{e}),\O(\vec{f})) \otimes V) + \nu(\vec{e},\vec{f}) \\
&= 2k^2 + \sum_{1 \leq i,j \leq k} f_j - e_i + \sum_{1 \leq i,j \leq k} (f_j - e_i +m) + \nu(\vec{e},\vec{f}) \\
&= 2k^2 + 2k(\deg(\vec{f}) - \deg(\vec{e})) + 2km + \nu(\vec{e},\vec{f}) \\
&= 2k^2 + 2k \delta + k^2 m + \nu(\vec{e},\vec{f}).
\end{align*}
Next, we have
\begin{align*} 
\dim G^{\vec{e},\vec{f}} &= h^0(\pp^1, \End \O(\vec{e})) + h^0(\pp^1, \End \O(\vec{f})) - 1 \\
&= 2k^2 + u(\vec{e}) + u(\vec{f}) - 1.
\end{align*}
Thus, we have
\begin{equation} \label{d3} \dim \U^{\vec{e},\vec{f}} = \dim X^{\vec{e},\vec{f}} - \dim G^{\vec{e},\vec{f}} = 2k\delta + k^2m + 1  - u(\vec{e}) - u(\vec{f}) + \nu(\vec{e},\vec{f}).
\end{equation}
Finally, we compute
\begin{align*}
\dim \mathcal{S} + g &= h^0(\pp^1, \Sym^{k} V \otimes \O_{\pp^1}(\delta))  - 1 + g\\
&= k + (0 + m + 2m + \ldots + km) + (k + 1)\delta + g\\
&= k + \frac{1}{2}(k + 1)km + (k + 1)\delta + \frac{1}{2}k(k - 1)m + (k - 1)(\delta - 1) \\
&=2k\delta +  k^2 m +1.
\end{align*}
Substituting this into \eqref{d3}, the dimension claim now follows. Since $X^{\vec{e},\vec{f}}$ is smooth and irreducible and $X^{\vec{e},\vec{f}} \to \U^{\vec{e},\vec{f}}$ is a $G^{\vec{e},\vec{f}}$ torsor, $\U^{\vec{e},\vec{f}}$ is also smooth and irreducible.
\end{proof}

\begin{rem}
Notice that the universal splitting locus $\mathcal{U}^{\vec{e},\vec{f}}$ is irreducible. 
 Thus, although the general fiber of $\U^{\vec{e},\vec{f}} \to \S$ may be reducible, all components must be exchanged under monodromy (see Example \ref{trige}).
\end{rem}

\section{Dominance of the determinant map and proof of Theorem \ref{maint}} \label{dd}
In the previous section, we described $\mathcal{U}^{\vec{e},\vec{f}}$ as a quotient of an open subset of an affine space. Our next task is to understand the geometry of a general fiber
$\mathcal{U}^{\vec{e},\vec{f}} \to \mathcal{S}$. In particular, we would like to know when $\mathcal{U}^{\vec{e},\vec{f}} \to \mathcal{S}$ is dominant. By the commutative diagram \eqref{com}, we know that $\mathcal{U}^{\vec{e},\vec{f}} \to \mathcal{S}$ is dominant if and only if $X^{\vec{e},\vec{f}} \to Y$ is dominant.

Fix splitting types $\vec{e}$ and $\vec{f}$ with $\deg(\vec{f}) - \deg(\vec{e}) = \delta$
and let $a_{i,j}$ and $b_{i,j}$ be as in \eqref{aij}.
Let $\phi^k$ be the map of affine spaces defined in \eqref{phidef}. Recall that $Y \subset H^0(\pp^1, \Sym^k V \otimes \O_{\pp^1}(\delta))$ is the locus of equations defining smooth, irreducible curves and
$X^{\vec{e},\vec{f}} = (\phi^k)^{-1}(Y)$.
We first observe some conditions that easily imply that $X^{\vec{e},\vec{f}}$ is empty.

\begin{lem} \label{easy}
If $a_{i,k - i+ 1} < 0$ for some $i$ or $b_{i,k - i} < 0$ for some $i$, then the image of $\phi^k$ is contained in the complement of $Y$. Consequently, $X^{\vec{e},\vec{f}}$ is empty.
\end{lem}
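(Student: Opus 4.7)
The overall strategy is to convert each degree hypothesis into a block-zero structure for the matrix $A$ or for $M := Ax + By$, and then deduce that $\det M$ either factors nontrivially or vanishes identically, preventing $V(\det M)$ from being a smooth irreducible curve of class $kH + \delta F$.

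The starting observation is that $a_{i,j} = f_i - e_{k+1-j}$ is non-decreasing in both $i$ and $j$ (since $e_1 \leq \cdots \leq e_k$ and $f_1 \leq \cdots \leq f_k$), and $b_{i,j} = a_{i,j} + m$ inherits the same monotonicity. Thus the hypothesis $a_{i_0, k - i_0 + 1} < 0$ for some $i_0$ forces a top-left $i_0 \times (k - i_0 + 1)$ zero block in $A$. Since $i_0 + (k - i_0 + 1) = k + 1 > k$, the first $i_0$ rows of $A$ are supported in at most $i_0 - 1$ columns and hence are linearly dependent, so $\det A \equiv 0$ as a polynomial in $t$. Evaluating $\det(Ax + By)|_{y=0} = x^k \det A = 0$ then shows that $y$ divides $\det M$. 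Writing $\det M = y \cdot Q$, either $Q \equiv 0$ (so $V(\det M) = \pp V$ is not a curve) or $V(\det M) = D \cup V(Q)$ is visibly reducible; in either case $\phi^k(A, B) \notin Y$.

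For the symmetric hypothesis $b_{i_0, k - i_0} < 0$, monotonicity gives $B_{r,s} = 0$ for $r \leq i_0$, $s \leq k - i_0$; moreover $a_{r,s} = b_{r,s} - m \leq b_{i_0, k-i_0} - m < 0$, so $A_{r,s} = 0$ in the same range as well. Hence the entire top-left $i_0 \times (k - i_0)$ block of $M$ vanishes, and the block determinant formula yields $\det M = \pm \det N \cdot \det P$, where $N$ is the top-right $i_0 \times i_0$ submatrix and $P$ is the bottom-left $(k-i_0) \times (k-i_0)$ submatrix of $M$. These are forms in $x, y$ of degrees $i_0$ and $k - i_0$ respectively, both positive. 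Thus $V(\det M)$ is either all of $\pp V$ (if one factor vanishes identically) or a union of two positive-degree divisors, and once again $\phi^k(A, B) \notin Y$. The conclusion $X^{\vec{e}, \vec{f}} = (\phi^k)^{-1}(Y) = \emptyset$ is immediate.

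The argument is essentially combinatorial bookkeeping, so there is no real obstacle; the two points worth attention are the borderline count $i_0 + (k - i_0 + 1) > k$ that produces linear dependence among the first $i_0$ rows of $A$ in the first case, and the observation in the second case that the relation $a = b - m$ forces the $A$-entries to vanish as well, so the zero block pertains to $M$ itself rather than merely to $B$.
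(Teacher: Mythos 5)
Your proof is correct and follows essentially the same route as the paper: monotonicity of the degrees forces a zero block, giving $\det A \equiv 0$ (so $y \mid \det(Ax+By)$) in the first case and a block anti-triangular factorization of $\det(Ax+By)$ in the second, hence the curve is reducible or not a curve at all. The only addition is your explicit handling of the degenerate case where a factor vanishes identically, which the paper leaves implicit.
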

\begin{proof}
Recall that the $a_{i,j}$ are increasing with $i$ and $j$.
Thus, if $a_{i,k-i+1} < 0$ for some $1 \leq i \leq k$, then the first $i$ rows of $A$ are supported in the last $i-1$ columns. This implies that $\det A = 0$, so $\det(Ax + By)$ is divisible by $y$ for every $(A, B)$. The vanishing of $\det(Ax + By)$ is therefore a reducible curve, so it lies in the complement of $Y$.

Now suppose $b_{i,k - i} < 0$ for some $1 \leq i \leq k-1$. Note that $a_{i,j} \leq b_{i,j}$ by \eqref{aij}. It follows that $Ax + By$ has a block shape where the first $i$ rows are supported in the last $i$ columns. Thus, the determinant of the $i \times i$ block formed by the first $i$ rows and the last $i$ columns divides $\det(Ax + By)$. In particular, the vanishing of $\det(Ax + By)$ defines a reducible curve, so lies in the complement of $Y$.
\end{proof}

Lemma \ref{easy} shows that, in order for $X^{\vec{e},\vec{f}} \to Y$ to be dominant, it is certainly necessary that $a_{i,k - i+ 1} \geq 0$ and $b_{i,k - i} \geq 0$ for all $i$.
The main challenge will be to show that these conditions are also sufficient.

\begin{lem} \label{mainl}
If $a_{i,k - i+ 1} \geq 0$ and $b_{i,k - i} \geq 0$, then $\phi^k$ is dominant.
\end{lem}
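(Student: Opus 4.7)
The plan is to induct on $k$. The base case $k = 1$ is immediate: $\phi^1$ sends $(A, B)$ to $Ax + By$ and is the identification of the source with the target $H^0(\pp^1, V \otimes \O_{\pp^1}(\delta))$.

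For the inductive step, introduce the truncated splitting types $\vec{e}\,':= (e_1, \ldots, e_{k-1})$ and $\vec{f}\,':= (f_1, \ldots, f_{k-1})$, which are the splitting types associated to the upper-right $(k-1) \times (k-1)$ submatrix of $Ax + By$ (rows $1, \ldots, k-1$ and columns $2, \ldots, k$). A direct calculation with \eqref{aij} shows $a'_{i, k-i} = a_{i, k+1-i} \geq 0$ and $b'_{i, k-1-i} = b_{i, k-i} \geq 0$, so by induction $\phi^{k-1}$ is dominant. I analyse $d\phi^k$ near a boundary point $(A_0, B_0) \in \overline{X^{\vec{e}, \vec{f}}}$ at which $M_0 := A_0 x + B_0 y$ has: zero bottom row except at $(k, 1)$, where $(M_0)_{k, 1} = \ell := \alpha x + \beta y$ for generic $\alpha, \beta$; zero first column except at $(k, 1)$; and upper-right $(k-1) \times (k-1)$ submatrix $M_0'$ a generic point of $X^{\vec{e}\,', \vec{f}\,'}$ at which $d\phi^{k-1}$ is surjective. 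Since the first column of $M_0$ has only the entry $\ell$ at position $(k, 1)$, Laplace expansion along it gives $\det M_0 = \pm \ell \cdot \det M_0'$, a reducible form; hence $(A_0, B_0)$ lies on the boundary of $X^{\vec{e}, \vec{f}}$, not in it.

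Computing $d\phi^k$ at $(A_0, B_0)$ by Laplace expansion, a variation in entry $(i, j)$ of the upper-right block (so $i < k$ and $j \geq 2$) contributes $(-1)^{i+j}(\dot{A}_{ij} x + \dot{B}_{ij} y) \cdot \mathrm{minor}_{ij}(M_0)$. Expanding $\mathrm{minor}_{ij}(M_0)$ along its bottom row, which still contains only the entry $\ell$ in column $1$, gives $\mathrm{minor}_{ij}(M_0) = \pm \ell \cdot \mathrm{minor}_{i, j-1}(M_0')$; comparing with the Laplace expansion of $\det M_0'$ and applying the inductive hypothesis, these variations sweep out the subspace $\ell \cdot H^0(\pp^1, \Sym^{k-1} V \otimes \O_{\pp^1}(\delta'))$, where $\delta' := \delta - (f_k - e_k)$. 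Variations in the bottom-row entries $(k, j)$ for $j \geq 2$ and in the first-column entries $(i, 1)$ for $i < k$ contribute nothing at $(A_0, B_0)$, because the corresponding minors of $M_0$ have a zero column or zero row; the variation at $(k, 1)$ only adds multiples of $\det M_0'$. Thus $d\phi^k|_{(A_0, B_0)}$ is not surjective.

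To recover the missing directions, pick a generic one-parameter family $(A_s, B_s) \in X^{\vec{e}, \vec{f}}$ with $(A_s, B_s) \to (A_0, B_0)$ as $s \to 0$, and let $W$ be the flat limit as $s \to 0$ of the family of subspaces $\im d\phi^k|_{(A_s, B_s)} \subset H^0(\pp^1, \Sym^k V \otimes \O_{\pp^1}(\delta))$. By construction $\dim W = \dim \im d\phi^k|_{(A_s, B_s)}$ for generic $s$, so showing $W$ equals the full target will establish dominance of $\phi^k$. The flat limit contains $\ell \cdot H^0(\pp^1, \Sym^{k-1} V \otimes \O_{\pp^1}(\delta'))$ by continuity of the upper-right-block variations; to produce elements of $W$ outside this subspace, I use a rescaling trick: for a variation $(\dot{A}, \dot{B})$ supported in the bottom row or first column, $d\phi^k|_{(A_s, B_s)}(\dot{A}, \dot{B})$ vanishes to first order in $s$, and $\lim_{s \to 0} s^{-1} d\phi^k|_{(A_s, B_s)}(\dot{A}, \dot{B})$ gives a new element of $W$. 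The main obstacle will be the verification that, as $(\dot{A}, \dot{B})$ ranges over all bottom-row and first-column variations and as the direction of the family varies, the resulting rescaled limits cover a complement of $\ell \cdot H^0(\pp^1, \Sym^{k-1} V \otimes \O_{\pp^1}(\delta'))$ inside $H^0(\pp^1, \Sym^k V \otimes \O_{\pp^1}(\delta))$. This amounts to a degree-by-degree bookkeeping of the allowed polynomial degrees in the bottom row and first column, and it is exactly here that the sharp hypotheses $a_{i, k+1-i} \geq 0$ and $b_{i, k-i} \geq 0$ enter in an essential way, matching Lemma \ref{easy} which shows they are also necessary for $X^{\vec{e}, \vec{f}}$ to be non-empty.
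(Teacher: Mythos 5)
Your overall architecture — induct on $k$, use Laplace expansion to see that variations of the upper-right $(k-1)\times(k-1)$ block sweep out $\ell\cdot(\text{image of }\d\phi^{k-1})$, then degenerate along a one-parameter family and use rescaled flat limits of the bottom-row/first-column directions to fill in the rest — is essentially the same skeleton as the paper's proof (induction, Laplace expansion along the bottom row, and the flat-limit argument of Lemmas \ref{dh}--\ref{flatlimit}). But the proposal has a genuine gap exactly where all the difficulty of the lemma lives: the final step, which you defer with ``the main obstacle will be the verification that \dots the resulting rescaled limits cover a complement of $\ell \cdot H^0(\pp^1, \Sym^{k-1} V \otimes \O_{\pp^1}(\delta'))$,'' is the entire content of the paper's Lemma \ref{is} and its auxiliary lemma, and it is not a routine degree count. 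The paper has to (i) first cut down to the triangular slice $\SUT$ (via Lemmas \ref{z} and \ref{simp}) so that the coefficients $P_1,P_k$ have clean product formulas and the Laplace bookkeeping is manageable, (ii) choose a very particular non-generic base point with a prescribed pattern of zeros but with \emph{nonvanishing} generic entries $B_{k-i,1}$ in the first column and fixed coprime anti-diagonal entries $F_i$ and corner entry $G$, (iii) detect the complement of the ``inductive'' subspace by evaluation maps $\Phi_F,\Phi_G$ at the roots of those fixed entries, exploiting a triangular structure in the $x$-degree, and (iv) split into cases according to $r=\max\{i: b_{k-i,1}\ge 0\}$, where surjectivity in the range $2\le i\le r$ requires the two-variable multiplication statement proved in the lemma following Lemma \ref{is}. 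Your degenerate point kills the whole first column and bottom row (except the corner), so the needed directions only appear at first order in $s$ coupled to the direction of approach, and showing that these rescaled limits actually span the quotient is precisely the hard analysis you have not carried out; asserting that it ``amounts to degree-by-degree bookkeeping'' is not a proof, and with full (non-triangular) matrices the bookkeeping is strictly messier than in the paper.

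A second, smaller but real, issue: your induction hypothesis is only that $\phi^{k-1}$ is \emph{dominant}, but you then need a point $M_0'$ of $X^{\vec e\,',\vec f\,'}$ at which $\d\phi^{k-1}$ is \emph{surjective}. In characteristic zero this follows from generic smoothness, but the paper works over an arbitrary algebraically closed field and Theorem \ref{maint}'s nonemptiness/dimension statements are characteristic-free; dominance does not imply generic surjectivity of the differential in positive characteristic. This is why the paper inducts on the stronger differential-level statement (Lemma \ref{main}, about the exact image of $\d\phi^k$ on the subspace $T'_p$) rather than on dominance itself. To repair your argument you would need to strengthen the inductive statement in the same way, and then supply the missing spanning argument for the rescaled limits — at which point you would essentially be reconstructing Lemma \ref{is}.
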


\begin{proof}[Proof of Theorem \ref{maint} assuming Lemma \ref{mainl}]
We first rewrite conditions (1) and (2) in the statement of Theorem \ref{maint} in terms of $a_{i,j}$ and $b_{i,j}$.
By \eqref{aij}, we have $a_{i,k-i+1} = f_i - e_{i}$, so condition (1) in Theorem \ref{maint} is equivalent to $a_{i,k-i+1} \geq 0$. Also by \eqref{aij}, we have $b_{i, k - i} = f_i - e_{i + 1} + m$, so condition (2) in Theorem \ref{maint} is equivalent to $b_{i,k-i} \geq 0$. Note that condition (3) in Theorem \ref{maint} is equivalent to $\deg(\vec{f}) - \deg(\vec{e}) = \delta$.

If either of conditions (1) or (2) fails, then Lemma \ref{easy} says that $X^{\vec{e},\vec{f}}$, and hence $\U^{\vec{e},\vec{f}}$, is empty. 
Note that condition (3) is also necessary for degree reasons (see Remark \ref{nec}).
This proves the first claim in Theorem \ref{maint}.

Meanwhile, if conditions (1) -- (3) hold, then Lemma \ref{mainl} says that 
\[\phi^k: H^0(\pp^1, \Hom(\O(\vec{e}), \O(\vec{f})) \otimes V) \to H^0(\pp^1, \Sym^k V \otimes \O_{\pp^1}(\delta))\] is dominant. Since $Y \subset H^0(\pp^1, \Sym^k V \otimes \O_{\pp^1}(\delta))$ is open, the map from its preimage $X^{\vec{e},\vec{f}} \to Y$ is also dominant.
By \eqref{com}, it follows that $\U^{\vec{e},\vec{f}} \to \mathcal{S}$ is dominant.
Thus, the dimension of the general fiber is
\[\dim U^{\vec{e},\vec{f}}(C) = \dim \U^{\vec{e},\vec{f}} - \dim \mathcal{S} = g - u(\vec{e}) - u(\vec{f}) + \nu(\vec{e}, \vec{f}), \]
where the second equality follows from Corollary \ref{dcor}.

By Corollary \ref{dcor}, we also know that $\U^{\vec{e},\vec{f}}$ is smooth. If the characteristic of the ground field is zero, then generic smoothness implies that that the general fiber is also smooth.
\end{proof}

The remainder of this section is devoted to proving Lemma \ref{mainl}.
Notice that the condition $a_{i, k -i + 1} \geq 0$ says that the degrees of all entries of $A$ on or below the anti-diagonal are nonnegative. Similarly, $b_{i, k - i } \geq 0$ implies that the degrees of all entries of $B$ on or below the line one above the anti-diagonal are nonnegative.
In what follows, the phrases ``upper triangular" and ``lower triangular" are meant with respect to the \emph{anti}-diagonal.

\subsection{Motivating example and base case: $k = 2$} \label{k2}
As motivation, we present a simple argument that $\phi^k$ is dominant (in fact surjective) when $k = 2$ and the conditions of Lemma \ref{mainl} are satisfied. Let $\mathsf{LU} \subset H^0(\pp^1, \Hom(\O(\vec{e}), \O(\vec{f})) \otimes V)$ be the 
linear subspace defined by $A_{11} = 0$ and $B_{22} = 0$. This is the subspace where $A$ is $\mathsf{L}$ower triangular and $B$ is $\mathsf{U}$pper triangular. For $(A, B) \in \mathsf{LU}$, we have
\[Ax + By = \left(\begin{matrix}
B_{11} y & A_{12}x + B_{12}y \\
A_{21}x + B_{21}y & A_{22}x
\end{matrix} \right)\]
so $\phi^2$ takes a simpler form, namely
\begin{equation} \label{sf} \phi^2(A, B) = -A_{21}A_{12}x^2 - (A_{12}B_{21} + A_{21}B_{12} - B_{11}A_{22})xy - B_{21}B_{12}y^2.
\end{equation}
It is now easy to see that $\phi^k$ is dominant by inspection. We may choose $A_{21}, A_{12}, B_{21}, B_{12}$ to obtain any desired coefficient of $x^2$ and $y^2$. After that, we still have the freedom to make
$B_{11}A_{22}$ arbitrary, so we can get any desired coefficient of $xy$.

As an example of Lemma \ref{easy}, we note that if either $a_{21} < 0$ or $a_{12} < 0$, forcing $A_{21}$ or $A_{12}$ to vanish, then the coefficient of $x^2$ would vanish. Additionally, if we had $b_{11} < 0$, forcing $B_{11} = 0$, then the resulting curve would be reducible.

\subsection{The subspaces $\mathsf{LU}$ and $\SUT$}
More generally, for any $k$, let us define \[\mathsf{LU} \subset H^0(\pp^1, \Hom(\O(\vec{e}), \O(\vec{f})) \otimes V)\]
to be the linear subspace where $A_{i,j} = 0$ for $i + j < k + 1$ and $B_{i,j} = 0$ for $i + j > k + 1$; in other words, where $A$ is $\mathsf{L}$ower triangular and $B$ is $\mathsf{U}$pper triangular.
It suffices to show that the restriction of $\phi^k$ to $\mathsf{LU} \to H^0(\pp^1, \Sym^k V \otimes \O_{\pp^1}(\delta))$ is dominant.

Given $P(x, y) \in H^0(\pp^1, \Sym^k V \otimes \O_{\pp^1}(\delta))$, we write
\[P(x, y) = P_k(t)x^k + \ldots + P_1(t) y^{k-1}x + 
P_0(t)y^k \]
where $P_i(t) \in H^0(\pp^1, \O_{\pp^1}(\delta + (k-i)m))$ is the coefficient of $x^i$.
Recall that the directrix in $\pp V$ is defined by the vanishing of $y$. 

To motivate what comes next, notice that for $(A, B) \in \mathsf{LU}$, the coefficient $P_0(t)$ depends only on the anti-diagonal entries of $B$. Explicitly, writing $\diag(B) = (B_{k,1}, \ldots, B_{1,k})$ for the tuple of anti-diagonal entries, 
we can factor $\phi^k$ as
\begin{equation} \label{factor} (A, B) \mapsto (\diag(B), P_1(t), \ldots, P_{k}(t)) \mapsto (\prod \diag(B), P_1(t), \ldots, P_{k}(t)).
\end{equation}
The latter map from $\{\diag(B), P_1(t), \ldots, P_{k}(t)\} \to \{P_0(t), P_1(t), \ldots, P_{k}(t)\}$ is clearly surjective, since all entries on the anti-diagonal of $B$ have nonnegative degree.
It therefore suffices to show that the first map above is dominant. Notice that this map commutes with projection onto $\{\diag(B)\}$. The following lemma tells us that to see such a map is dominant, it suffices to check dominance in the fiber of $(A,B)$'s with a fixed $\diag(B)$.

\begin{lem}[Dominance is open] \label{z}
Let $\X, \Y, \Z$ be irreducible varities and suppose we have a morphism 
\begin{center}\begin{tikzcd}
\Z \times \X \arrow{rr}{\varphi} \arrow{dr}[swap]{p} & & \Z \times \Y \arrow{dl}{q} \\
& \Z
\end{tikzcd}
\end{center}
commuting with projection onto $\Z$. We think of this as a family of morphisms $\X \to \Y$ parameterized by points $z \in \Z$.
If $p^{-1}(z_0) \to q^{-1}(z_0)$ is dominant for some $z_0 \in \Z$, then  $\varphi$ is dominant.
\end{lem}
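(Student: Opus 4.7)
The plan is to identify the closure $W := \overline{\varphi(\Z \times \X)} \subseteq \Z \times \Y$ with all of $\Z \times \Y$. Since $\Z \times \X$ is irreducible (as a product of irreducible varieties over the algebraically closed ground field), $W$ is an irreducible closed subvariety of $\Z \times \Y$. It therefore suffices to show $\dim W \geq \dim \Z + \dim \Y$; because the ambient $\Z \times \Y$ is irreducible of exactly that dimension, the inequality forces $W = \Z \times \Y$.

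First I would unpack the hypothesis. Because $\varphi$ commutes with projection to $\Z$, it has the form $\varphi(z,x) = (z,\psi(z,x))$ for some $\psi: \Z \times \X \to \Y$, and the dominance hypothesis is that $\varphi_{z_0} := \psi(z_0,-) : \X \to \Y$ is dominant. Thus $\overline{\varphi(\{z_0\}\times\X)} = \{z_0\} \times \Y$, and this copy of $\Y$ sits inside $W$ as a closed subvariety.

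The core of the argument is a dimension count for the dominant morphism $\varphi : \Z \times \X \to W$. By the fiber dimension theorem the generic fiber has dimension $\dim \Z + \dim \X - \dim W$, and upper semicontinuity of fiber dimension yields
\[
\dim \varphi^{-1}(w) \;\geq\; \dim \Z + \dim \X - \dim W \quad \text{for every } w \in W.
\]
I would apply this at $w = (z_0, y)$ for a generic $y \in \Y$, which lies in $W$ by the previous paragraph. Since $\varphi$ is fiber-preserving, $\varphi^{-1}(z_0,y) = \{z_0\} \times \varphi_{z_0}^{-1}(y)$, and the dominance of $\varphi_{z_0}$ gives $\dim \varphi_{z_0}^{-1}(y) = \dim \X - \dim \Y$ for generic $y$. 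Substituting, $\dim \X - \dim \Y \geq \dim \Z + \dim \X - \dim W$, i.e.\ $\dim W \geq \dim \Z + \dim \Y$, which is precisely the bound needed.

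I do not foresee a serious obstacle. The only step requiring care is matching a specific fiber of the morphism $\varphi$ with a generic fiber of the single map $\varphi_{z_0}$; commuting with projection to $\Z$ makes this identification automatic. No compactification, flatness, or further geometric input is needed, since the argument is entirely a dimension count driven by fiber-dimension semicontinuity applied to two dominant morphisms.
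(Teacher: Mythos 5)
Your argument is correct and essentially the same as the paper's: both hinge on producing a fiber of $\varphi$ over a point of $\{z_0\} \times \Y$ of dimension $\dim \X - \dim \Y$ and then running the fiber-dimension semicontinuity count to force the closure of the image to have dimension at least $\dim \Z + \dim \Y$. The only phrasing to tighten is ``for every $w \in W$'': the lower bound on fiber dimension holds at points of the \emph{image} of $\varphi$ (nonempty fibers), which is what you actually use, since for generic $y$ the point $(z_0, y)$ lies in the image of the dominant map $\varphi_{z_0}$.
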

\begin{proof}
If $p^{-1}(z_0) \to q^{-1}(z_0)$ is dominant, then there exists a point in $q^{-1}(z_0) = z_0 \times \Y$ over which the fiber of $\varphi$ has dimension $\dim z_0 \times \X - \dim z_0 \times \Y = \dim \Z \times \X - \dim \Z\times \Y$. By upper semicontinuity of fiber dimension, we conclude that the general fiber of $\varphi$ has dimension at most $\dim \Z \times \X - \dim \Z \times \Y$. But by dimension counting, equality must hold and the dimension of the image must equal the dimension of $\Z \times \Y$, so $\varphi$ is dominant.
\end{proof}

We will apply Lemma \ref{z} when $\Z = \{\diag(B)\} = \bigoplus_{i=1}^k H^0(\pp^1, \O_{\pp^1}(b_{i,k+1 - i}))$. We let $p: \mathsf{LU} \to \Z$ be the projection that sends $(A, B)$ to $\diag(B)$. Define $\SUT = p^{-1}(0, \ldots, 0) \subset \mathsf{LU}$. In other words, $\SUT$ is the subspace of $(A, B)$ where $A$ is lower triangular and $B$ is $\mathsf{S}$trictly $\mathsf{U}$pper triangular. The space $\SUT$ will play the role of $\mathcal{X}$ in the following application of Lemma \ref{z}. The space $\{P_1(t), \ldots, P_{k}(t)\} = \bigoplus_{i=0}^{k-1}H^0(\pp^1, \O_{\pp^1}(\delta + (k-i)m))$ will play the role of $\Y$. Writing $\{P_1(t), \ldots, P_{k}(t)\} \subset H^0(\pp^1, \Sym^k V \otimes \O_{\pp^1}(\delta))$ for the subspace where $P_{0}(t) = 0$, we have that $\phi^k$ sends $\SUT$ to $\{P_1(t), \ldots, P_{k}(t)\}$.

\begin{lem} \label{simp}
If the restriction of $\phi^k$ to $\SUT \to  \{P_1(t), \ldots, P_{k}(t)\}$ is dominant, then $\phi^k$ is dominant.
\end{lem}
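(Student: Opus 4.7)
The strategy is to exploit the factorization \eqref{factor} by separating the anti-diagonal entries of $B$ from the remaining data. For $(A,B) \in \mathsf{LU}$, the matrix $B$ is upper anti-triangular, so $\det B = \pm \prod_{i=1}^k B_{i,k+1-i}$, and consequently the coefficient $P_0(t)$ of $y^k$ in $\det(Ax+By)$ depends only on $\diag(B)$. This is the reason the factorization in \eqref{factor} exists, and it is the mechanism I will use to reduce dominance of $\phi^k$ to dominance of $\phi^k|_{\SUT}$.

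I would factor $\phi^k|_{\mathsf{LU}}$ as
\[\mathsf{LU} \xrightarrow{\psi} \Z \oplus \{P_1,\ldots,P_k\} \xrightarrow{\mu} H^0(\pp^1,\Sym^k V \otimes \O_{\pp^1}(\delta)),\]
where $\Z = \bigoplus_{i=1}^k H^0(\pp^1,\O_{\pp^1}(b_{i,k+1-i}))$ parametrizes $\diag(B)$, the map $\psi$ sends $(A,B)$ to $(\diag(B),P_1,\ldots,P_k)$, and $\mu$ applies the product map to the first factor while acting as the identity on the rest. First, I would verify that $\mu$ is surjective: the identity $\deg\vec{f}-\deg\vec{e}=\delta$ gives $\sum_{i=1}^k b_{i,k+1-i}=\delta+km=\deg P_0$, while the hypotheses of Lemma~\ref{mainl} force each $b_{i,k+1-i}\geq 0$ (since $b_{i,k+1-i}\geq b_{i,k-i}\geq 0$ for $i\leq k-1$, and $b_{k,1}=a_{k,1}+m\geq 0$). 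Over an algebraically closed field, any binary form of degree $\delta+km$ factors into linear forms which can then be redistributed into factors of the prescribed degrees.

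Second, I would apply Lemma~\ref{z} to $\psi$, taking $\Z$ as the parameter space and $\{P_1,\ldots,P_k\}$ as the target. Since $\psi$ commutes with projection to $\Z$, it suffices to check dominance on a single fiber of the projection $p\colon\mathsf{LU}\to\Z$. The fiber $p^{-1}(0)$ is precisely $\SUT$, on which $\phi^k$ lands in $\{P_1,\ldots,P_k\}$ because $P_0=\prod\diag(B)=0$. By hypothesis, this restriction is dominant, so $\psi$ is dominant; composing with the surjective $\mu$ yields dominance of $\phi^k|_{\mathsf{LU}}$, and a fortiori of $\phi^k$.

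The argument is essentially formal once the factorization and degree bookkeeping are in place, so there is no real obstacle at this stage. The substantive geometric content of Lemma~\ref{mainl} is deferred entirely to the forthcoming proof that $\phi^k|_{\SUT}$ is dominant onto $\{P_1,\ldots,P_k\}$, which will presumably require the inductive Laplace-expansion argument mentioned in the outline and is where the real difficulty lies.
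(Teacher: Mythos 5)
Your proof is correct and follows essentially the same route as the paper: factor $\phi^k|_{\mathsf{LU}}$ as in \eqref{factor}, note the product map on $\diag(B)$ is surjective because the anti-diagonal degrees $b_{i,k+1-i}$ are nonnegative and sum to $\deg P_0$, and apply Lemma \ref{z} over the fiber $\diag(B)=(0,\ldots,0)$, which is exactly $\SUT$. The only difference is that you spell out the degree bookkeeping and the factorization of binary forms that the paper leaves implicit.
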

\begin{proof}
Factor $\phi^k$ as in \eqref{factor}. The second map is dominant, so it suffices to show that the first map is dominant. Let $\mathcal{Z} = \{\diag(B)\}$ and let $\mathcal{X} = \SUT$ so that $\mathsf{LU} = \Z \times \X$. Now we apply Lemma \ref{z} with $z_0 = (0, \ldots, 0) \in \Z$. The map $p^{-1}(z_0) = \SUT \to q^{-1}(z_0)$ is the restriction of $\phi^k$ to  $\SUT \to  \{P_1(t), \ldots, P_{k}(t)\}$. In particular, if this map is dominant, then so is $\phi^k$.
\end{proof}

\subsection{The differential of $\phi^k$ restricted to $\SUT$}
Recall that one way to show that a map between smooth, irreducible varieties $\varphi: \X \to \Y$ is dominant is to show that its differential is surjective at some point $p \in \X$. This is because the kernel of the differential at $p$ is the tangent space to the fiber through $p$. Thus, if the differential is surjective at $p$, we learn that 
\[\dim \varphi^{-1}(\varphi(p)) \leq \dim \ker \mathrm{d} \varphi = \dim \X - \dim \Y.\]
By semicontinuity of fiber dimension, the general fiber has dimension at most $\dim \X - \dim \Y$. This implies that the image of $\varphi$ has dimension at least $\dim \Y$, so equality holds and $\varphi$ is dominant.

From now on, we always assume $A$ is lower triangular and $B$ is strictly upper triangular. By $T_{(A,B)}$ we shall mean the tangent space $T_{(A,B)} \SUT$ (which is of course isomorphic to the affine space $\SUT$). 
Let
\[[x^i]: H^0(\pp^1, \Sym^k V \otimes \O_{\pp^1}(\delta)) \to H^0(\pp^1, \O_{\pp^1}(\delta + (k - i)m))\] 
be the projection map defined by
\[[x^i](P(x, y)) = P_i(t).\]
Given $p = (A, B) \in \SUT$, we write
\[\mathrm{d}\phi_p^k: T_p \to \{P_1(t), \ldots, P_k(t)\} = \ker [x^0] \subset H^0(\pp^1, \Sym^k V \otimes \O_{\pp^1}(\delta))\]
for the differential of $\phi^k$ restricted to $\SUT$ at $p$. 
Notice that we are identifying $T_{\phi^k(p)} \ker [x^0]$ with the vector space 
$\ker [x^0]$.

Our goal is to show that $\mathrm{d}\phi_p^k$ surjects onto $\ker[x^0] = \{P_1(t), \ldots, P_k(t)\}$ for general $p \in \SUT$. Once this is established, the discussion and the beginning of this section together with Lemma \ref{simp} will prove Lemma \ref{mainl}.

\begin{lem} \label{sq}
For general $p \in \SUT$, the composition
\begin{equation} \label{dc} ([x^1]  \oplus [x^k]) \circ \d\phi^k_p: T_{p} \to \{P_1(t), P_k(t)\}
\end{equation}
is surjective.
\end{lem}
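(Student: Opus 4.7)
The plan is to exploit the fact that on $\SUT$, where $A$ is lower triangular and $B$ is strictly upper triangular (both with respect to the anti-diagonal), the two projections $[x^k] \circ \phi^k$ and $[x^1] \circ \phi^k$ collapse to explicit monomial expressions in \emph{disjoint} sets of matrix entries. The surjectivity question then reduces to two independent multiplication-of-global-sections statements on $\pp^1$.

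For $[x^k] \circ \phi^k(A,B) = \det A$, lower-triangularity of $A$ forces $\sigma(i) \geq k+1-i$ in any contributing permutation; combined with $\sum \sigma(i) = \sum (k+1-i) = k(k+1)/2$, equality must hold throughout, so only the reversal contributes and $\det A = \pm \prod_{i=1}^k A_{i,k+1-i}$. In particular, $[x^k] \circ \phi^k$ depends only on the anti-diagonal entries of $A$. For $[x^1] \circ \phi^k$, expanding $\det(Ax+By)$ by column multilinearity gives $\sum_{j=1}^k \det M_{\{j\}}$, where $M_{\{j\}}$ takes its $j$-th column from $A$ and all other columns from $B$. Since $B$ being strictly upper triangular forces $B_{i,k} = 0$ for every $i \geq 1$, column $k$ of $B$ is identically zero, so $\det M_{\{j\}} = 0$ for all $j < k$. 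For $j = k$, the column supports of $B$ imply $\sigma^{-1}(\ell) \leq k-\ell$ for $\ell < k$, and a pigeonhole argument then forces $\sigma^{-1}(\ell) = k-\ell$ for $\ell = 1, \ldots, k-1$ (and hence $\sigma^{-1}(k) = k$), yielding
\[ [x^1] \circ \phi^k(A,B) = \pm A_{k,k} \prod_{\ell=1}^{k-1} B_{k-\ell, \ell}.\]
Thus $[x^1] \circ \phi^k$ depends only on $A_{k,k}$ (which lies strictly below the anti-diagonal for $k \geq 2$) and on the ``anti-subdiagonal'' entries $B_{k-\ell,\ell}$ of $B$, a set of coordinates disjoint from those controlling $[x^k]$.

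It therefore suffices to verify each projection is separately surjective at a general $p$. Each differential is a multiplication map $(\dot h_i) \mapsto \sum_i \dot h_i g_i$, where the $g_i$ are products of the remaining factors. A direct calculation shows $\sum_i (d - \deg g_i) = d$ in both cases (with $d = \delta$ for $[x^k]$ and $d = \delta + (k-1)m$ for $[x^1]$), so the associated sheaf map $\bigoplus_i \O(d - \deg g_i) \to \O(d)$ has kernel of rank $k-1$ and trivial $c_1$. The hypotheses $a_{i,k+1-i} \geq 0$ and $b_{i,k-i} \geq 0$ of Lemma~\ref{mainl}---which give the relevant $g_i$ nonnegative degree and allow them to be chosen generically---ensure both that the $g_i$ share no common zero on $\pp^1$ (whence the sheaf map is surjective) and that for a general choice the kernel is balanced, i.e., $\cong \O^{\oplus (k-1)}$; hence $H^1$ of the kernel vanishes and the induced map on $H^0$ surjects.

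I anticipate no serious obstacle beyond the two elementary simplifications outlined above. The critical observation is that column $k$ of $B$ vanishes on $\SUT$, which kills all but one term in the $[x^1]$-expansion and thereby cleanly decouples the analysis of $[x^1]$ from that of $[x^k]$; everything else is routine multiplication of sections on $\pp^1$.
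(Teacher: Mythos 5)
Your reduction is exactly the paper's: on $\SUT$ the two relevant coefficients collapse to the monomial expressions $P_k = \pm\prod_{i} A_{i,k+1-i}$ and $P_1 = \pm A_{k,k}\prod_{\ell=1}^{k-1} B_{k-\ell,\ell}$ (the paper's \eqref{pk} and \eqref{p1}), these involve disjoint sets of entries, all of nonnegative degree under the hypotheses of Lemma \ref{mainl}, so the joint differential is block diagonal and everything reduces to surjectivity, at a general point, of the differential of an $n$-fold product of sections on $\pp^1$ --- precisely the statement the paper isolates as Lemma \ref{mult}.

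The gap is in how you dispose of that final statement. You write the differential as $(\dot h_i)\mapsto \sum_i \dot h_i g_i$ with $g_i=\prod_{j\neq i}Q_j$, and then assert that the hypotheses ``allow the $g_i$ to be chosen generically'' and that ``for a general choice the kernel is balanced,'' whence $H^1$ vanishes and the $H^0$-map surjects. Two problems. First, the $g_i$ are not independent general forms of their degrees: they are the correlated products $\prod_{j\neq i}Q_j$, lying in a proper subvariety of the space of $k$-tuples, so you cannot appeal to genericity of kernels of general maps $\bigoplus_i\O(a_i)\to\O(d)$; any balancedness (or even $H^1$-vanishing) claim must be proved for these structured maps. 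Second, since $H^1\bigl(\bigoplus_i\O(a_i)\bigr)=0$, the vanishing of $H^1$ of the kernel is \emph{equivalent} to the $H^0$-surjectivity you are trying to establish, so as written the crux of the lemma is asserted rather than argued (and full balancedness is more than you need anyway). The fact is true and the repair is short: for pairwise coprime $Q_j$, the coprime pair $(g_k,Q_k)$ has degrees summing to $d$, so $H^0(\O(d))=g_kH^0(\O(a_k))+Q_kH^0(\O(d-a_k))$ by the standard Koszul/Bezout argument, and induction on $k$ applied to $H^0(\O(d-a_k))$ with the factors $Q_1,\dots,Q_{k-1}$ finishes the surjectivity; alternatively, argue as in the paper's Lemma \ref{mult} by exhibiting one explicit point, say $(t^{d_1},s^{d_2})$ and its inductive analogues, where the differential is visibly surjective, and then use lower semicontinuity of rank. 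Either route is valid in any characteristic; note you cannot shortcut via ``the product map is surjective, hence its differential is generically surjective,'' since the paper works over an arbitrary algebraically closed field.
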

\begin{proof}
The map in \eqref{dc} is the differential of the map $(A, B) \mapsto (P_1(t), P_k(t))$.
Because $(A, B) \in \SUT$, the polynomials $P_1$ and $P_k$ are given by a simple formula, namely
\begin{equation} \label{p1} P_1(t) = B_{1,k-1}B_{2,k-2} \cdots B_{k-1,1} \cdot A_{k,k}
\end{equation}
and
\begin{equation} \label{pk} P_k(t) = A_{1,k} A_{2,k-1} \cdots A_{k,1}.
\end{equation}
In words, the first term of the output is the product of entries on the super anti-diagonal with the lower right entry; the last term is the product of the entries on the anti-diagonal. All of the entries involved have nonnegative degrees so they can be chosen freely. The result then follows from Lemma \ref{mult} below about the differential of multiplication maps.
\end{proof}

\begin{lem} \label{mult}
Let $d_1, d_2 \geq 0$ and let $\mu: H^0(\O_{\pp^1}(d_1)) \oplus H^0(\O_{\pp^1}(d_2)) \to H^0(\O_{\pp^1}(d_1 + d_2))$ be the multiplication map $(Q_1, Q_2) \mapsto Q_1Q_2$.  The differential $\mathrm{d}\mu$ is surjective at general $(Q_1, Q_2)$. More generally, the differential of an $n$-fold product map $(Q_1, \ldots, Q_n) \mapsto Q_1 \cdots Q_n$ is surjective at a general point.
\end{lem}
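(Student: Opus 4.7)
The plan is to compute the differential explicitly and reduce surjectivity to a dimension count combined with a coprimality condition. The differential of $\mu$ at a point $(Q_1, Q_2)$ is the linear map
\[\d\mu_{(Q_1,Q_2)}: H^0(\O_{\pp^1}(d_1)) \oplus H^0(\O_{\pp^1}(d_2)) \to H^0(\O_{\pp^1}(d_1 + d_2)), \qquad (R_1, R_2) \mapsto Q_2 R_1 + Q_1 R_2.\]
The source has dimension $d_1 + d_2 + 2$ and the target has dimension $d_1 + d_2 + 1$, so surjectivity is equivalent to the kernel having dimension exactly $1$. I would therefore compute the kernel directly.

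Suppose $(R_1, R_2)$ lies in the kernel, i.e., $Q_2 R_1 = -Q_1 R_2$. For a \emph{general} pair $(Q_1, Q_2)$, the binary forms $Q_1$ and $Q_2$ have no common zero on $\pp^1$, so they are coprime in the polynomial ring. Unique factorization then forces $Q_1 \mid R_1$; since $\deg R_1 \leq d_1 = \deg Q_1$, we get $R_1 = c Q_1$ for a scalar $c$, and then $R_2 = -c Q_2$. Thus the kernel is one-dimensional, spanned by $(Q_1, -Q_2)$, and $\d\mu_{(Q_1,Q_2)}$ is surjective at every point where $Q_1$ and $Q_2$ are coprime. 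Coprimality is an open nonempty condition, so it holds on a dense open set.

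For the $n$-fold product map $\mu_n: \bigoplus_{i=1}^n H^0(\O_{\pp^1}(d_i)) \to H^0(\O_{\pp^1}(d_1 + \cdots + d_n))$, I would proceed by induction on $n$, the base case $n=2$ being what we just proved (the case $n=1$ is trivial). For the inductive step, factor $\mu_n$ as the composition
\[\bigoplus_{i=1}^n H^0(\O_{\pp^1}(d_i)) \xrightarrow{\pi}  H^0(\O_{\pp^1}(d_1 + \cdots + d_{n-1})) \oplus H^0(\O_{\pp^1}(d_n)) \xrightarrow{\mu_2} H^0(\O_{\pp^1}(d_1 + \cdots + d_n)),\]
where $\pi(Q_1, \ldots, Q_n) = (Q_1 \cdots Q_{n-1}, Q_n)$. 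By the inductive hypothesis applied to $\mu_{n-1}$, the differential $\d\pi$ is surjective at a general point $(Q_1, \ldots, Q_n)$. For general such $(Q_1, \ldots, Q_n)$, the forms $Q_1 \cdots Q_{n-1}$ and $Q_n$ are coprime, so by the $n=2$ case, $\d\mu_2$ is surjective at $\pi(Q_1, \ldots, Q_n)$. By the chain rule $\d(\mu_n)_{(Q_1,\ldots,Q_n)} = \d\mu_2 \circ \d\pi$, hence $\d\mu_n$ is surjective at a general point.

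There is no real obstacle here; the only subtlety is to notice that a naive dimension count alone is not enough (the source is larger than the target), and that the coprimality hypothesis is exactly what cuts the kernel down to the expected dimension. Everything else is linear algebra and an open-density argument on $\pp^1$.
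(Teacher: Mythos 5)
Your proof is correct, and it works in every characteristic, which is what the paper needs. The route differs a little from the paper's: there, surjectivity of $\d\mu$ is checked by brute force at the single special point $(Q_1,Q_2)=(t^{d_1},s^{d_2})$, where the image visibly contains every monomial of degree $d_1+d_2$, and then lower semicontinuity of rank (or, in characteristic zero, generic smoothness plus surjectivity of $\mu$) spreads the conclusion to a general point; the $n$-fold case is dispatched in one line by induction together with surjectivity of the multiplication map. You instead compute the kernel of $\d\mu_{(Q_1,Q_2)}$ at an arbitrary coprime pair, using unique factorization to see it is spanned by $(Q_1,-Q_2)$, and conclude by the dimension count $(d_1+d_2+2)-1=d_1+d_2+1$. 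This buys a slightly stronger statement — surjectivity on the entire open locus of coprime pairs, not just at a general point — which you then exploit cleanly in the induction: you only need $Q_1\cdots Q_{n-1}$ and $Q_n$ to be coprime at the image point, avoiding any discussion of whether the image of a general point is "general" in the intermediate space (the issue the paper handles by invoking surjectivity of $\mu$). The paper's argument is shorter; yours is more explicit about exactly where the differential has full rank. Both are complete.
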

\begin{proof}
This is immediate in characteristic zero: since $\mu$ is surjective, the differential is surjective at a general point. For an argument in any characteristic, observe that the differential at $(Q_1, Q_2) = (t^{d_1}, s^{d_2})$ is given by $(Q_1', Q_2') \mapsto t^{d_1} Q_2' + s^{d_2}Q_1'$, which is clearly surjective. The rank of the differential is lower semicontinuous, so it has full rank at a general point in the domain. The claim for an $n$-fold product follows by induction and the fact that the multiplication map is surjective.
\end{proof}

For the next step, we first recall a basic fact from linear algebra that can be used to show that a linear map $T: V \to W$ is surjective. Let $W' \subset W$ be any subspace. If the composition $V \xrightarrow{T} W \to W/W'$ is surjective and $W' \subset T(V)$, then $T$ is surjective. 
Therefore, in light of Lemma \ref{sq} (letting $V = T_p$, $W = \{P_1(t),\dots,P_k(t)\}$, and $W'$ be the subspace below), to show that $\mathrm{d}\phi^k$ surjects onto $\ker[x^0]$,
it suffices to show that the subspace
\[\ker ([x^1] \oplus [x^k]) = \{P_1(t), \ldots, P_k(t) : P_1 = P_k = 0\} \subset \ker[x^0]\] 
is contained in the image of $\mathrm{d}\phi^k$.

We are going to prove this by induction, but we prove a slightly stronger statement that will play well with our inductive step.

Given $p = (A, B) \in \SUT$
let $T_{p}' \subset T_{p}$ denote the subspace of the tangent space where the entries on the anti-diagonal and super anti-diagonal vanish. Precisely, let $A_{i,j}'$ and $B_{i,j}'$ be the coordinates on the tangent space, so we represent a tangent vector as $(A + \epsilon A', B + \epsilon B')$ where $\epsilon^2 = 0$.
Then $T_{p}'$ is the subspace defined by 
$A_{i,k+1-i}' = 0$ and $B_{i,k-i}' = 0$.
Consider the restriction of the differential to this subspace
\begin{equation} \label{tp} 
T_{p}' \to \{P_1(t), \ldots, P_k(t)\}.
\end{equation}
Explicitly, the differential is given by sending $(A', B') \in T_p'$ to the coefficient of $\epsilon$ in
\[\det((A + \epsilon A')x + (B + \epsilon B')y) \mod \epsilon^2.\]
Considering \eqref{p1}, since the $B_{i,k-i}$ terms are unchanging, we see that the image of this map is certainly contained in the subspace where $\prod_{i=1}^{k-1} B_{k-i,i}$ divides $P_1(t)$.
Similarly, considering \eqref{pk}, since the $A_{i, k-i+1}$ are unchanging, the image is contained in the subspace where $P_k(t)$ vanishes. We will prove that $T'_{p}$ in fact surjects onto the subspace
where $\prod_{i=1}^{k-1} B_{k-i,i}$ divides $P_1(t)$ and $P_k(t)$ vanishes.

\begin{lem} \label{main}
For general $p = (A, B) \in \SUT$, the image of
\begin{equation} \label{me} ([x^1] \oplus \cdots \oplus [x^k]) \circ \mathrm{d}\phi^k_p: T'_p \to \{P_1(t), \ldots, P_k(t)\}
\end{equation}
is the subspace where
$\prod_{i=1}^{k-1} B_{k-i,i}$ divides $P_1(t)$
and
$P_k(t) = 0$.
\end{lem}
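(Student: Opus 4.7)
The plan is to argue by induction on $k$, taken simultaneously over all valid pairs $(\vec{e},\vec{f})$ satisfying the hypotheses of Lemma \ref{mainl}. For the base case $k=2$, we fall back on the direct computation in Section \ref{k2}: the space $T'_p$ is one-dimensional, spanned by $A'_{2,2}$, and $\mathrm{d}\phi^2_p(A'_{2,2}) = B_{1,1}A'_{2,2}\cdot xy$. Since multiplication by $B_{1,1}$ is surjective onto $B_{1,1}\cdot H^0(\pp^1,\O(a_{2,2}))$ and $P_2$ is automatically unchanged in $T'_p$, this image is precisely the claimed target.

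For the inductive step, I would specialize to a point $q=(A,B)\in\SUT$ at which $A_{k,j}=0$ for $2\le j\le k-1$ with the remaining entries generic. Since the bottom row of $B$ vanishes identically on $\SUT$, Laplace expansion of $\det(Ax+By)$ along the bottom row at $q$ collapses to two terms:
\[\det(Ax+By)|_q = (-1)^{k+1}A_{k,1}\,x\,N_1(x,y) + A_{k,k}\,x\,N_k(x,y),\]
where $N_j$ denotes the $(k-1)\times(k-1)$ minor obtained by deleting row $k$ and column $j$. This motivates the splitting $T'_q = T_{\mathrm{ind}} \oplus T_{\mathrm{edge}}$, where $T_{\mathrm{ind}}$ consists of variations supported in the upper-left $(k-1)\times(k-1)$ block and $T_{\mathrm{edge}}$ consists of variations supported in the bottom row of $A$ and the leftmost column of $B$ which are permitted by $T'_q$.

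On $T_{\mathrm{ind}}$, the differential is controlled by the derivatives of $N_1$ and $N_k$, which are $(k-1)\times(k-1)$ determinants whose matrix shapes fit, after tracking the degree indices $a_{i,j},b_{i,j}$, into an instance of the inductive hypothesis for $k-1$ applied to appropriate smaller splitting types $(\vec{e}^{\mathrm{sub}},\vec{f}^{\mathrm{sub}})$. Induction then identifies an explicit subspace $V_{\mathrm{ind}}$ of the target. On $T_{\mathrm{edge}}$, direct cofactor expansion gives an explicit subspace $V_{\mathrm{edge}}$ spanned by products of the free $A'_{k,j}$ and $B'_{i,1}$ with minors of $(A,B)|_q$.

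At $q$ itself, $V_{\mathrm{ind}}+V_{\mathrm{edge}}$ fails to exhaust the claimed target---the differential typically drops rank, as anticipated in the outline. To capture the missing directions, the plan is to take a general $1$-parameter family $p_t\to q$ with $p_t\in\SUT$ generic for $t\ne 0$, and study the flat limit, inside the Grassmannian of subspaces of the fixed target, of the images of $\mathrm{d}\phi^k_{p_t}$ as $t\to 0$. The limit should equal $V_{\mathrm{ind}}$ together with a deformed subspace $V'_{\mathrm{edge}}$ that transversely completes $V_{\mathrm{ind}}$ to the full target. Once this is established, upper semi-continuity of kernel dimension forces the differential at a generic $p$ to be surjective. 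The hard part will be precisely this flat-limit calculation: one must exhibit a family $p_t$ for which $V_{\mathrm{edge}}$ is smeared out in exactly the directions needed, invoking the hypotheses $a_{i,k+1-i}\ge 0$ and $b_{i,k-i}\ge 0$ to guarantee enough polynomial freedom in the bottom row and leftmost column to move transversely to $V_{\mathrm{ind}}$.
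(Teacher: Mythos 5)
Your high-level strategy---induct on $k$, specialize to a point with zeros in the bottom row, split the tangent space into an ``inductive'' block piece and an ``edge'' piece supported in the bottom row and first column, and recover full rank by a flat-limit argument because the special point drops rank---is the same as the paper's (it is essentially the strategy announced in the paper's outline). However, your specific set-up would not deliver the inductive step. You keep $A_{k,k}$ generic at $q$, so $\det(Ax+By)|_q=\pm A_{k,1}xN_1+A_{k,k}xN_k$ and the differential restricted to your block variations is a combination of derivatives of \emph{two} different minors; that is not the differential of a single smaller determinant map $\phi^{k-1}$, so ``apply the inductive hypothesis to $N_1$ and $N_k$'' has no precise meaning as stated. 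Moreover, your inductive block is the upper-left $(k-1)\times(k-1)$ block (delete the last row and last column), whose degree matrix, by \eqref{aij}, corresponds to the pair $\big((e_2,\dots,e_k),(f_1,\dots,f_{k-1})\big)$; its anti-diagonal degrees are $f_i-e_{i+1}$, which may be negative under the standing hypotheses of Lemma \ref{mainl}, so this sub-problem need not have the structure required for the inductive image to be as large as you need. The paper instead takes $q$ with the \emph{entire} bottom row zero except $A_{k,1}$ and inducts on the minor obtained by deleting the last row and \emph{first} column: then the determinant factors as $A_{k,1}x$ times a $(k-1)\times(k-1)$ determinant whose degree data is $\big((e_1,\dots,e_{k-1}),(f_1,\dots,f_{k-1})\big)$, which does inherit the hypotheses, and Lemma \ref{ql} identifies $\d_q(T^{k-1}_q)$ \emph{exactly}, as the kernel of explicit evaluation maps.

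The second gap is that you explicitly defer ``the hard part,'' and that is where the content of the proof lies. Two things must actually be proved: (a) at a general $p$, the image of the edge piece surjects onto a complement of the inductive image---in the paper this is the technical Lemma \ref{is}, proved by composing with the evaluation maps $\Phi_F\circ[x^2]$ and $\Phi_G\circ[x^i]$ at the roots of $F=\prod_{i}B_{k-i,i}$ and $G=A_{k,1}$, a triangularity analysis of the Laplace expansion, and a separate multiplication-map lemma; and (b) the edge images do not degenerate in the limit. For a ``general'' one-parameter family $p_t\to q$ there is no reason you can identify $\lim_{t\to0}\d_{p_t}(T^{\llcorner}_{p_t})$; the paper uses the very particular family $p(h)$ obtained by scaling $A_{k,2},\dots,A_{k,k}$, for which a rescaling identity (Lemma \ref{flatlimit}) shows $\d_{p(h)}(T^{\llcorner}_{p(h)})=\d_p(T^{\llcorner}_p)$ is literally constant for $h\neq0$, while Lemma \ref{dh} gives the containment $\d_q(T^{k-1}_q)\subseteq\lim_{h\to0}\d_{p(h)}(T^{k-1}_{p(h)})$; spanning then follows from Lemmas \ref{is} and \ref{ql} and openness of spanning. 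Without (a), (b), the exact kernel identification of Lemma \ref{ql}, and the correct choice of special point and inductive minor, the proposal restates the strategy but does not close the argument.
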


It is easy to obtain Lemma \ref{mainl} from Lemma \ref{main}: from Lemma \ref{main}, it of course follows that the image of $\mathrm{d}\phi^k$ contains the further subspace where $P_k(t) = P_1(t) = 0$. Next, we prove Lemma \ref{main} by induction on $k$, but first we must develop a lemma to help with the inductive step. The reader who wishes to see how it will be used first can read as far as the statement of Lemma \ref{is} and then skip to Section \ref{pfm}.

\subsection{Technical lemma for the inductive step}
Let $\k$ be our ground field. 
Given a homogeneous degree $\ell$ polynomial $F \in \k[s,t]$ with distinct roots $p_1, \ldots, p_\ell \in \mathbb{A}^1_{\k} \subset \pp^1$ in the chart $s = 1$, we
define 
\[\Phi_F: H^0(\O_{\pp^1}(d)) \to \k^{\oplus \deg F}\]
via $f \mapsto (f(1,p_1), \ldots, f(1,p_\ell))$. 

Let $p = (A, B) \in S$. Recall that we defined $T'_p \subset T_p$ to be the subspace of the tangent space to $\SUT$ where $A_{i,k+1-i}' = B_{i,k-i}' = 0$. We define $T_p'' \subset T_p'$ to be the further subspace where $A_{k,k}' = 0$. Finally, we define the subspace $T^{\llcorner}_p \subset T''_p$ to be the further subspace where $A_{k,1}' = 0$ and $A_{i,j}' = B_{i,j}' = 0$ for all $i,j$ such that $i \leq k - 1$ and $j \geq 2$. The notation derives from the fact this corresponds to the entries wrapping around the lower left corner of the matrix below. The subspace $T^{\llcorner}_p$ corresponds to perturbing only entries in the first column and last row (besides $A_{k,1}$ and $A_{k,k}$ and $B_{k-1,1}$). These entries have green {\color{green!70!black} $+\epsilon$}'s in the matrix below to indicate that they deform when moving within $T^{\llcorner}_p$.

\begin{lem} \label{is}
Suppose $k \geq 3$. For $2 \leq i \leq k-1$, let $F_i$ be a collection of polynomials of degree $b_{k-i, i}$ all with distinct roots also distinct from each other. Set $F = \prod_{i=2}^{k-1} F_i$.
Let $G$ be a polynomial of degree $a_{k,1}$ with distinct roots, distinct from all the $F_i$. Let $\SUT' \subset \SUT$ be the subset of $(A, B)$ with $B_{k-i,i} = F_i$ and $A_{k,1} = G$. For general $p \in \SUT'$, the map
\begin{equation} \label{themap} \left(\bigg (\Phi_{F} \circ [x^2]\bigg) \oplus
\bigg(\bigoplus_{i=2}^{k-1} \Phi_G \circ [x^i] \bigg)
\right) \circ \mathrm{d}\phi_p^k : T^{\llcorner}_{p} \to \k^{\oplus \deg F} \oplus  \bigoplus_{i=2}^{k-1} \k^{\oplus \deg G}    
\end{equation}
is surjective.
\end{lem}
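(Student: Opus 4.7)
My plan is to compute the differential $\mathrm{d}\phi^k_p$ explicitly via Laplace expansion, and then establish surjectivity by a dimension count together with pointwise non-vanishing at the roots of $F$ and $G$. Because a tangent vector $(A', B') \in T^{\llcorner}_p$ has nonzero entries only in positions $A_{k,j}'$ for $2 \le j \le k-1$ and $B_{i,1}'$ for $1 \le i \le k-2$, the first-order expansion of $\det(M + \epsilon M')$ yields
\[\mathrm{d}\phi^k_p(A', B') = x \sum_{j=2}^{k-1}(-1)^{k+j} A_{k,j}'\, M^{(k,j)} + y \sum_{i=1}^{k-2}(-1)^{i+1} B_{i,1}'\, M^{(i,1)},\]
where $M^{(r,s)}$ denotes the $(r,s)$-cofactor minor of $M = Ax + By$ at $p$. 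Extracting $[x^\ell y^{k-\ell}]$ and evaluating at the prescribed roots makes \eqref{themap} into an explicit linear map.

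A direct computation using $a_{i,j} = f_i - e_{k+1-j}$ and $b_{i,j} = a_{i,j} + m$ yields the clean identity $\dim T^{\llcorner}_p - (\deg F + (k-2)\deg G) = 2(k-2)$, so by upper-semicontinuity of rank it suffices to exhibit a single $p \in \SUT'$ at which the map is surjective. The plan is to reduce this to two pointwise claims: (a) at each root $\beta$ of $G$, the $(k-2) \times (k-2)$ matrix
\[T_\beta := \bigl((-1)^{k+j}[x^{\ell-1}y^{k-\ell}]\, M^{(k,j)}(\beta)\bigr)_{\ell, j=2}^{k-1}\]
coming from the $A_{k,j}'(\beta)$-contributions is invertible; and (b) at each root $\alpha$ of some $F_\ell$, the row $\bigl((-1)^{i+1}[x^2 y^{k-3}]\, M^{(i,1)}(\alpha)\bigr)_{i=1}^{k-2}$ of $B_{i,1}'(\alpha)$-contributions is nonzero. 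Since $a_{k,j} \ge a_{k,1}$ for $j \ge 2$, the polynomials $A_{k,j}'$ will have enough degrees of freedom to interpolate any prescribed values at the roots of $G$; combined with the $2(k-2)$ excess in the dimension count and a compensating choice of $B_{i,1}'$ supported near the roots of $F$, I plan to promote (a) and (b) into global surjectivity of \eqref{themap}.

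The main obstacle is claim (a): proving generic invertibility of $T_\beta$. Each minor $M^{(k,j)}$ is the determinant of a $(k-1) \times (k-1)$ anti-triangular submatrix of $M$, and its monomial coefficients are explicit multilinear expressions in the unfixed entries of $A$ and $B$. The strategy for $\det T_\beta \ne 0$ is to specialize most of the unfixed entries to zero so that $T_\beta$ becomes (block-)triangular with each diagonal entry a visibly nonzero product involving distinct remaining unfixed variables together with values of the fixed $F_i$ at $\beta$; I verified this explicitly for $k = 3$ and $k = 4$, and in general expect it from the fact that the entries $A_{k-1,2}, A_{k-2,3}, \ldots$ on and just above the anti-diagonal contribute to distinct minors in different $[x^{\ell-1}y^{k-\ell}]$-degrees. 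Generic non-vanishing then follows by upper-semicontinuity of rank, and claim (b) is handled by an analogous but simpler specialization.
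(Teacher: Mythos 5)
Your overall skeleton (cofactor expansion of $\mathrm{d}\phi^k_p$, reduction to a single well-chosen point by openness of surjectivity, and a specialization making the $\Phi_G$-block triangular) is sound and runs parallel to the paper's Steps 1--2a: at the paper's special point the coefficient of $A'_{k,j}$ indeed has $x$-degree at most $k-j+1$ with explicitly nonzero leading term, which is exactly your claim (a) in anti-triangular form. The genuine gap is the reduction of surjectivity of \eqref{themap} to the pointwise claims (a) and (b). What you actually need is that the subspace of $T^{\llcorner}_p$ on which all the $\Phi_G\circ[x^i]$ components vanish still surjects onto $\k^{\oplus \deg F}$, and claim (b) (nonvanishing of the row of $B'$-contributions at each root of $F$) is far too weak for this. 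The directions $B'_{i,1}$ alone cannot span the $\Phi_F$-target for degree reasons: $B'_{k-i,1}$ varies in a space of dimension $b_{k-i,1}+1$ with $b_{k-i,1}=f_{k-i}-e_k+m$, while the block of the target attached to $F_i$ has dimension $\deg F_i=b_{k-i,i}=f_{k-i}-e_{k+1-i}+m$, and the gap $e_k-e_{k+1-i}$ can be arbitrarily large. So one is forced to use the residual freedom in $A'_{k,i}$ (beyond interpolation at the roots of $G$) jointly with $B'_{k-i,1}$, through the product-rule expression $A'_{k,i}B_{k-i,1}+B'_{k-i,1}A_{k,i}$ appearing in the $x^2$-coefficient, under the exactly tight identity $a_{k,i}+b_{k-i,1}=a_{k,1}+b_{k-i,i}=\deg G+\deg F_i$, and while simultaneously keeping every $\Phi_G$-component equal to zero. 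Your $2(k-2)$ dimension excess is consistent with surjectivity but does not produce it, and ``a compensating choice of $B'_{i,1}$ supported near the roots of $F$'' has no meaning for polynomial sections. This coupled, tight-degree step is precisely where the paper has to work: after triangularizing, it reduces to surjectivity of $(A'_{k,i},B'_{k-i,1})\mapsto(\Phi_{F_i}(A'_{k,i}B_{k-i,1}+B'_{k-i,1}A_{k,i}),\,\Phi_G(A'_{k,i}))$, proved by a separate lemma at a special point where $A_{k,i}$ is taken divisible by $G$, so that perturbing the cofactor of $G$ together with $B_{k-i,1}$ sweeps out $\k^{\oplus\deg F_i}$ while staying in the kernel of $\Phi_G$. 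Nothing in your proposal plays this role.

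Two secondary points. First, your identity $\dim T^{\llcorner}_p-(\deg F+(k-2)\deg G)=2(k-2)$ implicitly assumes $b_{i,1}\geq 0$ for all $1\leq i\leq k-2$; in general some first-column entries are forced to vanish (the paper's integer $r$), and for the indices $i>r$ the needed surjectivity comes instead from the strict inequality $a_{k,i}>\deg G+\deg F_i$, so a case distinction is unavoidable. Second, even granting (a), passing from invertibility of each $T_\beta$ to hitting the $\Phi_G$-part is fine by interpolation since $a_{k,j}\geq a_{k,1}$, but that only gives surjectivity onto one factor of the target; the missing content is entirely in the coupled $\Phi_F$ step described above.
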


\begin{proof}
 Let $r$ be the largest number such that $b_{k-r,1} \geq 0$.
Surjectivity of the map \eqref{themap} is an open condition, so it suffices to find some $p = (A, B) \in \SUT'$ where the map is surjective.
Our choice is a point where $Ax + By$ has the following form. In order to declutter notation, each $A$ entry below is understood to be multiplied by $x$ and each $B$ entry to be multiplied by $y$.
\begin{equation*}
\left(\begin{matrix}
0 & 0 & 0 & 0 & \cdots & 0 & 0 & {\color{red} B_{1,k-1}} & A_{1,k} \\
0 & 0 & 0 & 0 & \cdots & 0 & {\color{red} B_{2,k-2}} & A_{2,k-1} & A_{2,k} \\
0 & 0 & 0 & 0 & \cdots & {\color{red} B_{3,k-3}} &  A_{3,k-2} & 0 & A_{3,k} \\
 \vdots & &  & &  & & &  & \vdots  \\
 0 & 0 & 0 &  & \cdots &  & 0 & 0 & A_{k-r-1,k} \\
 B_{k-r,1} {\color{green!70!black} + \epsilon} & 0 & 0 &   & &  & 0 & 0 & 0\\
  \vdots & \vdots &  \vdots & &  & & \vdots & \vdots & \vdots  \\ 
 B_{k-3,1} {\color{green!70!black} + \epsilon } & 0 & {\color{red} B_{k-3,3}} & A_{k-3,4}  & \cdots & 0 & 0 & 0  & 0 \\
B_{k-2,1} {\color{green!70!black} + \epsilon } & {\color{red} B_{k-2,2}} & A_{k-2,3} & 0 & \cdots & 0 & 0 & 0  & 0 \\
B_{k-1,1} {\color{green!70!black}} & 0 & 0 & 0  & \cdots & 0 & 0 & 0 & A_{k-1,k} \\
{\color{orange} A_{k,1}} & A_{k,2}{\color{green!70!black} + \epsilon }& A_{k,3}  {\color{green!70!black} + \epsilon } & A_{k,4} {\color{green!70!black} + \epsilon } & \cdots & A_{k,k-3} {\color{green!70!black} + \epsilon} &
A_{k,k-2} {\color{green!70!black} + \epsilon } & A_{k, k-1} {\color{green!70!black} + \epsilon } & A_{k,k}
\end{matrix} \right).
\end{equation*}
The red entries (the $B_{k-i,i} = F_i$) and the orange entry ($A_{k,1} = G$) are fixed. The black entries are general. The green {\color{green!70!black} $+\epsilon$} indicates that the tangent vector at that entry is permitted to be nonzero (for example, $A_{k,i} + \epsilon$ is shorthand for $A_{k,i} + \epsilon A_{k,i}'$ where $A_{k,i}'$ corresponds to the tangent vector changing $A_{k,i}$ in that direction.) The term {\color{green!70!black} $+\epsilon$} marks the subspace we call $T^{\llcorner}_p$.

To compute the differential of $\phi^k$ at $p$, we must find the $\epsilon$ coefficient of 
\begin{equation} \label{ed} \det((A + \epsilon A')x + (B + \epsilon B')y) \mod \epsilon^2
\end{equation}
for arbitrary $(A', B') \in T^{\llcorner}_p$.
To do so, we consider the Laplace expansion of the determinant \eqref{ed} across the bottom row. We now show that the first and last term in this expansion have vanishing $\epsilon$ coefficient. To show this for the first term, observe that $A_{k,1}' = 0$ and the minor obtained by deleting the last row and first column does not contain an $\epsilon$ term. Similarly, to show this for the last term, observe that $A_{k,k}' = 0$ and the determinant of the minor obtained by deleting the last row and last column has no $\epsilon$ term; in that minor, $B_{k-1,1}$ is the only nonzero entry in its row, so it must be used and so no other $\epsilon$ terms appear. Consequently, the $\epsilon$ coefficient in \eqref{ed} is a sum over $2 \leq i \leq k-1$ of
the $\epsilon$ coefficients the following quantity: $\pm (A_{k,i} + \epsilon A_{k,i}')$ times
$x$ times the minor obtained by deleting the last row and $i$th column.

Recall that the
$\epsilon$ coefficient of \eqref{ed} is a polynomial in $x, y$ whose coefficients are polynomials in the entries of $A, B, A',$ and $B'$. Since $\epsilon^2 = 0$, this expression is a linear form in the  entries of $A'$ and $B'$. It will be useful to us to consider the $\epsilon$ coefficient of \eqref{ed} as a linear form in the entries of $A',B'$ whose coefficients are polynomials in $x,y$. The coefficients of these polynomials in $x,y$ are polynomials in the entries of $A,B$. The argument has three main steps:
\begin{enumerate}
    \item First, we claim that the coefficient of $A_{k,i}'$ in the $\epsilon$ coefficient of \eqref{ed}
    is a polynomial of degree $k - i+1$ in $x$. Since the degree of monomials of $x$ that appear decreases as $i$ increases, this will show that \eqref{themap} has an ``upper triangular" form.
    \item Next, we give an explicit formula for the contribution of $A_{k,i}'$ to the coefficient of each $x^{k-i+1}$ and $x^2$.
    \item From (1) and (2), we reduce to showing surjectivity of the restriction of $\Phi_G \circ [x^{k-i+1}] \oplus \Phi_{F_i} \circ [x^2]$ to the subspace $\{A_{k,i}',B_{k-i,1}'\} \subset T^{\llcorner}_p$ where all but these two entries vanish.
    \end{enumerate}

One convenient fact about our chosen point $p$ is that the second to last row has only two nonzero entries, $B_{k-1,1}$ and $A_{k-1,k}$, so precisely one of them appears in each term of the determinant.

\medskip
\textit{Claim:} Any term involving $A_{k-1,k}$ and $A_{k,i}'$ only contributes to the $x^2$ coefficient and the contribution is zero unless $i \leq r$. 

Proof: Seeing that such a term must contribute to $x^2$ is easy: once we cross out the bottom two rows, $i$th column, and rightmost column, all the $A$ terms lie strictly below the diagonal. Thus, we obtain a term divisible by $y^{k-2}$. Moreover, if $i > r$, then the resulting matrix obtained from crossing out these rows and columns is block lower triangular, but the lower left $(i-1) \times (i-1)$ block has a row of zeros, so the determinant vanishes. (Recall that here and throughout, upper and lower triangular are with meant with respect to the anti-diagonal.)

\medskip
\textit{Step 1: Show the coefficient of $A_{k,i}'$ is a polynomial of degree  at most $k - i+1$ in $x$.}

Proof: 
Note that we always have $i \leq k - 1$ so $k - i +1 \geq 2$. Looking at the second to last row, we have two choices: we can use $B_{k-1,1}$ or we can use $A_{k-1,k}$. If we use $A_{k-1,k}$, the claim says the only output is in the $x^2$ coefficient.
If we use $B_{k-1,1}$, we must take the determinant of the matrix obtained by crossing out the bottom two rows, leftmost column and $i$th column. This matrix is block lower triangular, and the lower left block involving the columns to the left of the $i$th column
is lower triangular with red $B$'s along the diagonal. Hence, the red $B$'s in the columns to the left of the $i$th column must occur in the determinant. Together with $B_{k-1,1}$, this forces
$i-1$ copies of $y$, so the $x$ degree is at most $k - i + 1$.

\medskip
\textit{Step 2a: Identify the coefficient of $x^{k-i+1}$ in the coefficient of $A_{k,i}'$ when $i < k - 1$.}
We first show that for $2 \leq i < k - 1$, the $x^{k - i+1}$-coefficient contributed by $A_{k,i}'$ must come from using $B_{k-1,1}$ in the second to last row. Indeed, the claim above says that if we use $A_{k-1,k}$, then it only contributes to $x^2$.
Now cross out the bottom two rows, left column and $i$th column. As mentioned above, we must use the red entries on the diagonal between columns $1$ and $i$. To get the maximal $x$ degree, imagine setting the rest of the $B$'s in the matrix equal to zero. When we do this, the matrix becomes lower triangular and the remaining entries on the diagonal are the $A_{k+1-j,j}$ with $j > i$. Combined with Step 1,
it follows that $[x^{k-i+1}] \circ \d\phi^k(A',B')$ has the form
\[\pm {\color{green!70!black} A_{k,i}'} \cdot  B_{k-1,1}{\color{red} B_{k-2,2} \cdots B_{k-(i-1),i-1} }  \cdot A_{k-i,i+1} \cdots A_{1,k}
+ (\text{terms with $A_{k,j}'$ for $j < i$}). \]

\medskip
\textit{Step 2b: Identify the coefficient of $x^2$.}
We also need to track the contribution of every term to the $x^2$ coefficient. Recall that we are expanding the determinant along the bottom row, and the $\epsilon$ coefficient in \eqref{ed}
is a sum over $2 \leq i \leq k-1$ of
the $\epsilon$ coefficients of $\pm (A_{k,i} + \epsilon A_{k,i}')$ times
$x$ times the minor obtained by deleting the last row and $i$th column.
The contribution to the coefficient of $x^2$ from the $i$th term depends on whether $i \leq r$ or not, which we treat in cases (i) and (ii) below. 

\medskip
(i) The case $i \leq r$. 
Notice that all entries in the rightmost column are $A$'s so
it necessarily contributes an $x$.  Suppose we use $A_{j,k}$ with $j < k - 1$ from the last column. To see the contribution, we cross out the $j$th row, last row, last column and $i$th column. Now consider the $(k - j)$th column: the only nonzero entry not already crossed out is $A_{j+1,k-j}$, but using it would increase the $x$ degree to $3$ or more.
Thus, we must use $A_{k-1,k}$ from the last column. To calculate this contribution, cross out the bottom two rows, last column and $i$th column. We have accounted for two $x$'s already, so we can set the remaining $A$'s equal to zero. The result is a block upper triangular matrix where the upper right block has red $B$'s along the diagonal and the lower left $(i - 1 \times i - 1)$ block has red $B$'s on the subdiagonal. To get nonzero determinant in this lower left block, we must use $B_{k - i, 1} + \epsilon B_{k-i,1}'$ times all the remaining red entries. The contribution is therefore the $\epsilon$ coefficient of
\[\pm (A_{k,i} + {\color{green!70!black} \epsilon A_{k,i}'})(B_{k - i,1} + {\color{green!70!black} \epsilon B_{k-i,1}'}) \cdot A_{k-1,k} \cdot \prod_{\substack{2 \leq j \leq k - 1 \\ j \neq i}} {\color{red} B_{k-j,j}},\]
which is
\begin{equation} \label{x21}
\pm  ({\color{green!70!black} A_{k,i}'} B_{k-i,1} + {\color{green!70!black} B_{k-i,1}'}A_{k,i}) \cdot A_{k-1,k} \cdot \prod_{\substack{2 \leq j \leq k - 1 \\ j \neq i}} {\color{red} B_{k-j,j}}. 
\end{equation} 

\medskip
(ii) The case $i > r$. We must use one of $A_{k-1,k}$ or $B_{k-1,1}$ from the second to last row. By the claim, since $i > r$, the contribution from using $A_{k-1,k}$ vanishes. Thus, 
we must use $B_{k-1,1}$ from the second to last row. To find this contribution, we cross out the first column, $i$th column, and last two rows. The result is a block lower triangular matrix. The lower left $(i - 2) \times (i - 2)$ block is lower triangular with red $B$'s on the diagonal. The upper right $(k - i) \times (k - i)$ block has red $B$'s on the superdiagonal and $A_{k - i, k}$ in its lower right entry. The contribution is therefore
\begin{equation} \label{x22} \pm  {\color{green!70!black} A_{k,i}'} \cdot B_{k-1,1} \cdot A_{k-i,k} \prod_{\substack{2 \leq j \leq k - 1 \\ j \neq i}} {\color{red} B_{k-j,j}}.
\end{equation}

\medskip
In summary, we have $[x^2] \circ \d\phi^k(A',B')$ is the sum of \eqref{x21} over $2 \leq i \leq r$ plus the sum of  \eqref{x22} over $r + 1 \leq i \leq k-1$. 

\medskip
\textit{Step 3: Study evaluation maps and reduce to subspaces for $\{A_{k,i}',B_{k-i,1}'\}$.}
Our calculations above allow us to explicitly determine the map
\begin{equation} \label{phif} \Phi_F \circ [x^2] \circ \d\phi^k_p : T_p^{\llcorner} \to \k^{\oplus \deg F} = \bigoplus_{i=2}^{k-1} \k^{\oplus \deg F_i},
\end{equation}
where we have used the fact that the $F_i$ have no common roots to decompose the target into the direct sum of spaces corresponding to each $F_i$. Notice that in each of \eqref{x21} and \eqref{x22}, all but one of the red $B$'s appears. This means that when we apply $\Phi_F$, the contributions break up nicely. In particular,
\eqref{phif} sends a tangent vector $(A_{k,2}', \ldots, A_{k,k-1}', B_{k-2,1}' \ldots, B_{k-r,1}') \in T_p^{\llcorner}$ to the column vector where the $i$th component is $\Phi_{F_i}$ applied to \eqref{x21} if $2 \leq i \leq r$ or to \eqref{x22} if $r+1 \leq i \leq k-1$. Moreover, the $F_i$'s and black entries have no common zeros since the black entries are general. Therefore, up to rescaling basis vectors of the target (according to the values of the other terms at the roots of the $F_i$), the map \eqref{phif} is given by
\begin{gather*}
\Phi_{F_2}( {\color{green!70!black} A_{k,2}'} B_{k-2,1} + {\color{green!70!black} B'_{k-2,1}}A_{k,2}) \\
 \vdots \\
\Phi_{F_r}({\color{green!70!black} A_{k,r}'} B_{k-r,1} + {\color{green!70!black} B'_{k-r,1}}A_{r,2}) \\
 \Phi_{F_{r+1}} ({\color{green!70!black} A_{k,r+1}'}) \\
\vdots \\
 \Phi_{F_{k-1}}({\color{green!70!black} A_{k,k-1}'}).
\end{gather*}

Meanwhile,  up to rescaling basis vectors of the target,  $\Phi_G \circ [x^{k-i+1}]$ has the form
\[(A_{k,2}', \ldots, A_{k,k-1}', B_{k-2,1}' \ldots, B_{k-r,1}') \mapsto \Phi_G(A_{k,i}') + \Phi_G(\text{terms with $A_{k,j}'$ for $j < i$}).\]
It follows that $\bigoplus_{i=2}^{k-1}\Phi_G \circ [x^{k-i+1}]$ is block triangular and the blocks on the diagonal are given by 
\[A_{k,i}' \mapsto \Phi_G(A_{k,i}').\]
Thus, to show that \eqref{themap} is surjective, it suffices to see that for each $i$, the map
\[({\color{green!70!black} A_{k,i}'}, {\color{green!70!black}B_{k-i,1}'}) \mapsto 
\begin{cases} ( \Phi_{F_i}({\color{green!70!black}A'_{k,i} }B_{k-i,1} + {\color{green!70!black}B_{k-i,1}'} A_{k,i}), \Phi_G({\color{green!70!black}A_{k,i}'}))  &\text{if $2 \leq i \leq r$} \\
( \Phi_{F_i}({\color{green!70!black}A_{k,i}'}), \Phi_G({\color{green!70!black}A_{k,i}'}))
&\text{if $r+1 \leq i \leq k-1$}
\end{cases}
\]
is surjective. If $r + 1 \leq i \leq k - 1$, then by the definition of $r$, we have $b_{k-i,1} < 0$. Hence,
we have 
\[\deg A_{k,i}' = a_{k,i} > a_{k,i} + b_{k-i,1} = a_{k,1} + b_{k-i,i} = \deg G + \deg F_i,\]
and surjectivity follows. On the other hand, if $2 \leq i \leq r$, then we have
\[\deg A_{k,i}' + \deg B_{k-i,1}' = a_{k,i} + b_{k-i,1} = a_{k,1} + b_{k-i,i} = \deg G + \deg F_i \]
Hence, the case $2 \leq i \leq r$ follows from the lemma below.

Recall that $\deg G = a_{k,1} \leq a_{k,i} = \deg A'_{k,i}$.
\end{proof}

\begin{lem}
Fix homogeneous polynomials $F, G \in \k[s,t]$ with no common roots. Choose $a,b \geq 0$ such that $a + b = \deg(F) + \deg(G)$ and $a \geq \deg(G)$. For general homogeneous polynomials $A,B \in \k[s,t]$ of degrees $a,b$, the map
\[H^0(\pp^1, \O(a)) \oplus H^0(\pp^1, \O(b)) \to \k^{\oplus \deg F} \oplus \k^{\oplus \deg G}\]
given by 
\[(A', B') \to (\Phi_{F}(A'B + B'A), \Phi_G(A')) \]
is surjective. 
\end{lem}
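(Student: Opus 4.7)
The plan is to reduce surjectivity of the given map (call it $\Psi$) to the surjectivity of an auxiliary linear map $\Lambda$ into $\k[t]/A$, and verify the latter by a transversality/openness argument. Since surjectivity is Zariski-open in $(A,B)$, it suffices to produce a single pair for which $\Psi$ is surjective.

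Let $f = \deg F$ and $g = \deg G$. Since $a \geq g$, the evaluation map $\Phi_G : H^0(\O(a)) \to \k^g$ is surjective with kernel $G \cdot H^0(\O(a - g))$. Hence the projection $(A', B') \mapsto \Phi_G(A')$ already surjects, and on its kernel $G \cdot H^0(\O(a - g)) \oplus H^0(\O(b))$ the first component of $\Psi$ becomes $(X, B') \mapsto \Phi_F(GBX + A B')$. Because $a + b = f + g$, one has $\ker \Phi_F = F \cdot H^0(\O(g))$, so surjectivity of $\Psi$ is equivalent to the equality
\[
GB \cdot H^0(\O(a - g)) + A \cdot H^0(\O(b)) + F \cdot H^0(\O(g)) = H^0(\O(a + b)).
\]
The three summands have combined dimension $a + b + 3$, two more than the target dimension $a + b + 1$, so this equality holds iff the natural sum map has two-dimensional kernel.

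For generic $A$ coprime to $GB$ and $F$, a kernel triple $(X, Y, Z)$ satisfies $AY = -(GBX + FZ)$, and this has a solution $Y \in H^0(\O(b))$ exactly when $GBX + FZ \equiv 0 \pmod{A}$ (with $Y$ then forced and automatically of the correct degree). The problem reduces to showing that
\[
\Lambda : H^0(\O(a - g)) \oplus H^0(\O(g)) \longrightarrow \k[t]/A, \qquad (X, Z) \mapsto GBX + FZ \bmod A,
\]
is surjective (its source has dimension $a + 2$ and target $a$). Choosing $A$ with $a$ distinct roots $r_1, \dots, r_a$ (distinct from the roots of $F$ and $G$) and identifying $\k[t]/A \cong \k^a$ via evaluation at the $r_i$, surjectivity of $\Lambda$ translates to the statement that the subspaces $V_1 = \{(Z(r_i))_i : Z \in H^0(\O(g))\}$ and $V_2(\delta) = \mathrm{diag}(\delta_i) \cdot \{(X(r_i))_i : X \in H^0(\O(a - g))\}$ span $\k^a$, where $\delta_i = G(r_i) B(r_i)/F(r_i)$.

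Spanning is a Zariski-open condition on $\delta \in (\k^*)^a$, and the concrete choice $\delta_i = r_i^g$ exhibits the good locus as nonempty: for this $\delta$, $V_1 + V_2(\delta)$ is the evaluation at the $r_i$ of $H^0(\O(g)) + t^g \cdot H^0(\O(a - g)) = H^0(\O(a))$, which surjects onto $\k^a$ since its kernel is $A \cdot \k$. Finally, as $B$ ranges over $H^0(\O(b))$ with $A$ fixed generic, the achievable $\delta$ traces out a subspace of dimension $\min(b + 1, a)$, which for generic $A$ is general enough to meet the good locus. The main subtlety is justifying this last step rigorously --- showing that the family of scalings parameterized by $B$ does not lie entirely in the (proper closed) bad locus. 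I would handle this by a standard generic-subspace argument combined with the freedom to vary $A$, or alternatively by constructing an explicit witness $(A, B)$ (e.g., taking $A$ with roots at $1, \dots, a$ and choosing $B$ to interpolate convenient values) for which the displayed equality holds.
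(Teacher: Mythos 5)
Your reduction is correct as far as it goes: surjectivity of the second factor plus surjectivity on its kernel does reduce the lemma to the identity $GB\cdot H^0(\O(a-g)) + A\cdot H^0(\O(b)) + F\cdot H^0(\O(g)) = H^0(\pp^1,\O(a+b))$, and the translation into the spanning condition for $V_1 + \mathrm{diag}(\delta_i)V_2$ with $\delta_i = G(r_i)B(r_i)/F(r_i)$ is also fine. The problem is the last step, which you yourself flag: the openness argument in $\delta$-space proves only that the bad locus is a proper closed subset of $\k^a$, while the $\delta$'s you can actually realize sweep out a \emph{specific} linear subspace of dimension $\min(b+1,a)$ determined by $A$, $F$, $G$ (the rescaled image of evaluation of $H^0(\O(b))$ at the roots of $A$). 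When $b+1 < a$ this subspace is far from generic, and nothing in your argument rules out its being contained in the bad locus; "a standard generic-subspace argument" does not apply because neither the subspace nor the bad locus (which moves with $A$) is generic, and the suggested witness ("$B$ interpolating convenient values at $1,\dots,a$") is not available since $B$ has only $b+1$ coefficients. So the existence of a single good pair $(A,B)$ --- which is the entire content of the lemma --- is left unproved.

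For comparison, the paper's proof consists precisely of producing such a witness: it factors $A = CD$ with $\deg C = \deg G$, specializes to $C = G$ with $G,D,B$ pairwise coprime, and checks surjectivity of the differential there using two coprimality facts. The same idea closes your gap inside your own reduction: take $A = GD$ with $D$ of degree $a - \deg G$ coprime to $B$; then $GB\cdot H^0(\O(a-g)) + GD\cdot H^0(\O(b)) = G\cdot H^0(\O(\deg F))$ because $B,D$ are coprime and $\deg B + \deg D = \deg F$, and $G\cdot H^0(\O(\deg F)) + F\cdot H^0(\O(\deg G)) = H^0(\O(a+b))$ because $F,G$ are coprime, so the displayed equality holds at this (special) pair and surjectivity for general $(A,B)$ follows by semicontinuity. (Alternatively, keeping $A$ with roots away from $F$ and $G$, one can take $B$ to be a degree-$b$ factor $F_1$ of $F$ and argue similarly.) Without some such explicit construction, the step you postpone is a genuine gap, not a routine verification.
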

\begin{proof}
The map in question is the differential of the map $(A, B) \mapsto (\Phi_{F}(AB), \Phi_G(A)$). Since surjectivity of the differential is an open condition, it suffices to show that this map has surjective differential at some point $(A, B)$. To prove this, we show that the composition 
\begin{equation} \label{compo}
\begin{tikzcd}
H^0(\pp^1, \O(\deg G)) \oplus H^0(\pp^1, \O(\deg A - \deg G)) \oplus H^0(\pp^1, \O(\deg B)) \arrow{d}  \\
H^0(\pp^1, \O(\deg A)) \oplus H^0(\pp^1, \O(\deg B)) \arrow{d} \\
\k^{\oplus \deg F} \oplus \k^{\oplus \deg G}
\end{tikzcd}
\end{equation}
given by
\begin{equation}
(C, D, B) \mapsto (CD, B) \mapsto (\Phi_{F}(CDB), \Phi_G(CD))
\end{equation}
has surjective differential at some point. We will compute the differential of \eqref{compo} at a point $(C, D, B) = (G, D, B)$ where no pair of $G, D$ and $B$ have a common root. Let us write elements of the tangent space as $(C', D', B')$.
We claim that the image of the subspace $\{(0, D', B')\}$ contains the subspace $\k^{\oplus \deg F} \oplus 0 \subset 
\k^{\oplus \deg F} \oplus \k^{\oplus \deg G} $.
 Indeed, the derivative of \eqref{compo} sends
\[(0, D', B') \mapsto (\Phi_{F}(GDB' + GBD'), 0).\]
Since $G$ and $F_i$ have no common roots, this differs from $(\Phi_{F}(B'D + BD'), 0)$ by a rescaling of basis vectors on the target. Since $B$ and $D$ have no common roots, the map $(D', B') \mapsto B'D + D' B$ surjects onto the space of polynomials of degree $\deg B + \deg D  = \deg B + \deg A - \deg G = \deg F$. The evaluation map $\Phi_{F}: H^0(\pp^1, \O(\deg F)) \to \k^{\oplus \deg F}$ is surjective. This proves the claim.

Finally, we show that differential sends the subspace $\{(C', 0, 0)\}$ surjectively onto the quotient of the tangent space to the target given by $(\k^{\oplus \deg F} \oplus \k^{\deg G})/(\k^{\oplus \deg F} \oplus 0)$.
Indeed, the map to this quotient is given by $(C', 0, 0) \mapsto \Phi_G(C'D)$. This is just a rescaling of basis vectors of $(C', 0, 0) \mapsto \Phi_G(C')$, which is surjective because $C'$ is an arbitrary polynomial of the same degree as $G$. 
\end{proof}

\subsection{Proof of Lemma \ref{main}} \label{pfm}
We proceed by induction on $k$. 
If $k = 2$, then $T_p''$ is the subspace corresponding to the entry $A_{22}'$. Using the formula for the determinant in \eqref{sf}, we see that the differential sends $A_{22}' \mapsto P_2(t) = 0$ and $P_1(t) = B_{11} A_{22}'$, as desired.
We now assume $k \geq 3$ and that we have proved the lemma for all smaller values of $k$.

First we observe that the composition of \eqref{me} with
projection onto $P_1(t)$ hits every multiple of $\prod_{i=1}^{k-1} B_{k-i,i}$.
Indeed, consider a tangent vector $(A', B')$ where all entries of $A'$ and $B'$ vanish except for $A_{k,k}'$. Then $\d\phi_p^k(A', B') =  (\prod_{i=1}^{k-1} B_{k-i,i} \cdot A_{k,k}')xy^{k-1}$. Varying $A_{k,k}'$, we see that $P_1(t)$ can be any multiple of $\prod_{i=1}^{k-1} B_{k-i,i}$.

Thus, we are reduced to showing that the subspace $\{0, P_2(t), \ldots, P_{k-1}(t), 0\}$ lies in the image of \eqref{me}. The subspace $T''_p \subset T'_p$ where $A_{k,k}' = 0$ is always sent to this subspace.
It thus suffices to show that the map
\[\d_p := ([x^2] \oplus \cdots \oplus [x^{k-1}]) \circ \d \phi^k_p: T''_p \rightarrow \{P_2(t), \ldots, P_{k-1}(t)\}\]
is surjective.
The key insight is to
decompose $T''_p = T^{k-1}_p \oplus T^{\llcorner}_p$ where
\begin{enumerate} 
\item $T^{k-1}_p$ is the ``inductive subspace" defined by the vanishing of entries in the first column and last row.
Let 
\[s: \{\text{$k \times k$ matrices} \} \to \{\text{$k-1 \times k-1$ matrices} \}
\]
be the map that sends a matrix to the one obtained by deleting the last row and first column.
Given a pair of matrices $p = (A, B)$, we define $s(p) = (s(A), s(B))$. There is a natural identification of $T_{p}^{k-1}$ with $T'_{s(p)}$. 
\item $T^{\llcorner}_p$ is the ``expanding subspace'' corresponding to the entries in the first column and last row. Note that by the definition of $T''_p$ we always have $A_{k,1}' = A_{k,k}' = B_{k-1,1}' = 0$. The space $T^{\llcorner}_p$ is the subspace corresponding to the green {\color{green!70!black} $+\epsilon$} in the matrix in the proof Lemma \ref{is}.
\end{enumerate}
Our goal is then to show that $\d_p(T^{k-1}_p)$ and $\d_p(T^{\llcorner}_p)$ span $\{P_2(t), \ldots, P_{k-1}(t)\}$ for a general point $p \in \SUT$.

Now let $p = (A, B) \in \SUT$ be a general point. In particular, we can take the entries $B_{k-i,i}$ for $2 \leq i \leq k-1$ and $A_{k,1}$ to all have distinct roots and be pairwise coprime.
Let $F = \prod_{i=2}^{k-1} B_{k-i,i}$ and $G = A_{k,1}$.
By Lemma \ref{is}, the map
\begin{equation} \label{ev1} \left(\bigg(\Phi_{F} \circ [x^2] \bigg)\oplus
\bigg(\bigoplus_{i=2}^{k-1} \Phi_G \circ [x^i] \bigg) 
\right) \circ \mathrm{d}_p: T^{\llcorner}_p \to \k^{\oplus \deg F} \oplus \bigoplus_{i=2}^{k-1} \k^{\oplus \deg G} 
\end{equation}
is surjective. Notice that the kernel of
\begin{equation} \label{myevals} \bigg(\Phi_{F} \circ [x^2]\bigg)\oplus
\bigg(\bigoplus_{i=2}^{k-1} \Phi_G \circ [x^i] \bigg): \{P_2(t), \ldots, P_{k-1}(t)\} \to \k^{\oplus \deg F} \oplus \bigoplus_{i=2}^{k-1} \k^{\oplus \deg G}  
\end{equation}
is the subspace such that $G \vert P_2(t), \ldots, P_{k-1}(t)$ and $F \vert P_2(t)$. This subspace has advantageous properties for our inductive argument.

\begin{lem} \label{ql}
Suppose $q \in \SUT$ is a general point subject to the constraint that the entries besides $A_{k,1}$ in the bottom row vanish. Then $\d_q(T_q^{k-1}) \subset \{P_2(t), \ldots, P_{k-1}(t)\}$ is the subspace such that $G \vert P_2(t), \ldots, P_{k-1}(t)$ and $F \vert P_2(t)$. 
\end{lem}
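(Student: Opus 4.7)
The strategy is a clean reduction to the $(k-1)$-case via Laplace expansion. At the special point $q$, the only nonzero entry in the bottom row of $qx + Ry$ (writing $R$ for the matrix valued in $y$-coefficients) is $A_{k,1}x = Gx$, so Laplace expansion along the bottom row gives
\[\det(qx + Ry) = (-1)^{k+1} G x \cdot \det(s(q)x + s(R)y).\]
By definition, tangent vectors $(A', B') \in T^{k-1}_q$ vanish on the first column and last row, so the same Laplace expansion applies to the perturbed matrix and yields
\[\d\phi^k_q(A', B') = (-1)^{k+1} G x \cdot \d\phi^{k-1}_{s(q)}(s(A'), s(B'))\]
under the natural identification $T^{k-1}_q \cong T'_{s(q)}$ already noted in the text.

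Next I would verify that $s(q)$ is a general point of the $(k-1)$-version of $\SUT$ so that the inductive hypothesis applies. The only constraints defining our locus concern the bottom row of $q$, which $s$ removes, so a generic choice of $q$ within the constrained locus produces a generic $s(q)$. The degree conditions descend: using $a'_{i,j} = a_{i,j+1}$ and $b'_{i,j} = b_{i,j+1}$, one has $a'_{i, k-i} = a_{i, k-i+1} \geq 0$ and $b'_{i, k-1-i} = b_{i, k-i} \geq 0$. Crucially, the super anti-diagonal of $s(B)$ is $(B_{k-2,2}, B_{k-3,3}, \ldots, B_{1,k-1})$, whose product is exactly $F$. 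The inductive version of Lemma \ref{main} therefore says that the image of
\[([x^1] \oplus \cdots \oplus [x^{k-1}]) \circ \d\phi^{k-1}_{s(q)}: T'_{s(q)} \to \{P^{(k-1)}_1(t), \ldots, P^{(k-1)}_{k-1}(t)\}\]
is the subspace where $F \mid P^{(k-1)}_1$ and $P^{(k-1)}_{k-1} = 0$.

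Finally, I would translate this back through multiplication by $\pm Gx$, which sends $P^{(k-1)}_i$ to $\pm G \cdot P^{(k-1)}_i = P^{(k)}_{i+1}$. After projecting to $\{P_2(t), \ldots, P_{k-1}(t)\}$ (the shifted copy of the vanishing $P^{(k-1)}_{k-1}$ lands in $P_k$ and is discarded), the image consists of tuples satisfying $GF \mid P_2$ and $G \mid P_i$ for $3 \leq i \leq k-1$; equivalently, $G \mid P_i$ for all $2 \leq i \leq k-1$ together with $F \mid P_2$. This is exactly the claim of Lemma \ref{ql}. The main subtlety, more bookkeeping than conceptual obstacle, is carefully tracking the index shifts and degree relabelings when passing between the $k$- and $(k-1)$-problems; the algebraic core — a single Laplace expansion combined with the inductive hypothesis — is very clean once the determinantal factorization is in hand.
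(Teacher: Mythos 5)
Your proposal is correct and follows essentially the same route as the paper: Laplace expansion along the bottom row at $q$ to factor $\d\phi^k_q$ on $T^{k-1}_q$ as $\pm Gx\cdot \d\phi^{k-1}_{s(q)}\circ s$, the observation that $s(q)$ is general, the inductive appeal to Lemma \ref{main} for $k-1$ (with the product of the super anti-diagonal of $s(B)$ identified as $F$), and translation back by multiplication by $Gx$. Your extra bookkeeping (signs, the degree checks $a'_{i,k-i}=a_{i,k-i+1}$, $b'_{i,k-1-i}=b_{i,k-i}$, and the implicit coprimality of $F$ and $G$ in the final equivalence) only makes explicit what the paper leaves tacit.
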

\begin{proof} 
Because of the vanishing entries in the bottom row,
for any $(A', B') \in T_q^{k-1}$, we have 
\begin{align*} \d\phi^k_q(A', B') &= \text{$\epsilon$ coefficient of} \det((A + \epsilon A')x + (B + \epsilon B')y) \mod \epsilon^2 \\
&= A_{k,1}x \cdot \text{$\epsilon$ coefficient of} \det(s(A + \epsilon A')x + s(B + \epsilon B')y) \mod \epsilon^2 \\
&= Gx \cdot \d\phi^{k-1}_{s(q)}(s(A', B')).
\end{align*}
Since $q$ is general subject to the vanishing constraints in its bottom row,
$s(q)$ is general. The map $(A', B') \mapsto s(A', B')$ defines an isomorphism $T_q^{k-1} \to T_{s(q)}'$.
By induction on $k$, we know that $\d\phi^{k-1}_{s(q)}( T_{s(q)}')$ is the subspace of polynomials of degree $k-1$
where the constant coefficient vanishes, $\prod_{i=2}^{k-1} B_{k-i,i} = F$ divides the coefficient of $x$, and the coefficient of $x^{k-1}$ vanishes. Multiplying such polynomials by $Gx$, we obtain arbitrary polynomials of degree $k$ such that the constant and $x$ coefficients vanish, $F$ divides the coefficient of $x^2$, $G$ divides all coefficients, and the coefficient of $x^k$ vanishes.
\end{proof}

If we knew that Lemma \ref{is} held at $q$ with this vanishing along the bottom row, then
Lemma \ref{ev1} would show that 
 $\d_q(T^{k-1}_q)$ is the kernel of \eqref{ev1}, and we would conclude that 
$\d_q(T^{k-1}_q)$
and
 $\d_q(T^{\llcorner}_q)$ span $\{P_2(t), \ldots, P_{k-1}(t)\}$. However, the $q$ with this vanishing along the bottom row are not general, so we cannot invoke Lemma \ref{is} at $q$.
Instead, we are going to consider a family of points $p(h)$ parameterized by $h \in \mathbb{A}^1_{\k} \smallsetminus 0$ and follow $\d_{p(h)}(T^{k-1}_{p(h)})$ and $\d_{p(h)}(T^{\llcorner}_{p(h)})$ as $h \to 0$. We'll show that the flat limits of these two subspaces span. It then follows that the two subspaces span for general $h$.

First we prove a general fact about the flat limit of a family of linear maps, and then give a toy example showing how our specialization argument will go, using this lemma.

\begin{lem} \label{dh}
Suppose $D_h: V \to W$ is a family of linear maps between two vector spaces which is represented by matrices whose entries are polynomials in $h$. Then 
\[D_0(V) \subseteq \lim_{h \to 0}D_h(V) .\] 
\end{lem}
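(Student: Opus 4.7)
The plan is to interpret the flat limit $\lim_{h \to 0} D_h(V)$ as the value at $h=0$ of a morphism to a Grassmannian, and then conclude via closedness of the incidence variety (tautological subbundle).

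First I would fix the meaning of flat limit. Let $r$ be the generic rank of $D_h$ for $h$ near $0$, so necessarily $r \geq \rank(D_0)$. Because the entries of $D_h$ are polynomial in $h$, the locus
\[U := \{h \in \mathbb{A}^1 : \rank D_h = r\}\]
is open (its complement is cut out by the simultaneous vanishing of all $r \times r$ minors of the matrix representing $D_h$). On $U$, the assignment $h \mapsto D_h(V)$ gives a morphism $U \to \Gr(r, W)$. Since $\Gr(r, W)$ is projective and $\mathbb{A}^1$ is a smooth curve, this morphism extends uniquely over the (at most one) missing point $h=0$ by the valuative criterion of properness. By definition, $\Lambda := \lim_{h \to 0} D_h(V)$ is the value at $h=0$ of this extension.

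To prove the inclusion $D_0(V) \subseteq \Lambda$, I would fix an arbitrary $v \in V$ and consider the incidence variety
\[\mathcal{I} := \{(\pi, w) \in \Gr(r, W) \times W : w \in \pi\},\]
which is closed (it is the total space of the tautological rank-$r$ subbundle, realized as a closed subvariety of the trivial bundle $\Gr(r, W) \times W$). Pair the extended morphism $\mathbb{A}^1 \to \Gr(r, W)$ with the polynomial map $h \mapsto D_h(v)$ to obtain a morphism $\Psi_v: \mathbb{A}^1 \to \Gr(r, W) \times W$. For every $h \in U$, the pair $(D_h(V), D_h(v))$ lies in $\mathcal{I}$, so $\Psi_v(U) \subseteq \mathcal{I}$. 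Since $\mathcal{I}$ is closed and $U$ is dense in a neighborhood of $0$, we conclude $\Psi_v(0) = (\Lambda, D_0(v)) \in \mathcal{I}$, i.e.\ $D_0(v) \in \Lambda$. Letting $v$ range over $V$ gives $D_0(V) \subseteq \Lambda$.

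There is no real obstacle here: the whole content of the lemma is that ``the image vector lies in the image subspace'' is a closed condition preserved under specialization, which is captured by the closedness of $\mathcal{I}$ together with the properness of the Grassmannian. One could equally well argue with an explicit local trivialization of the extended section of $\Gr(r, W)$ over a neighborhood of $h = 0$, but the Grassmannian/incidence formulation is more transparent.
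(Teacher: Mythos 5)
Your argument is correct, and it takes a somewhat different route from the paper's, although both rest on the same principle that a closed condition is preserved under specialization. The paper argues directly on the total space: it considers $\alpha(h,v) = (h, v, D_h(v))$ on $\mathbb{A}^1 \times V$, observes by continuity that $\alpha(\mathbb{A}^1 \times V)$ lies in the closure of $\alpha((\mathbb{A}^1 \smallsetminus 0)\times V)$, and reads off the inclusion $D_0(V) \subseteq \lim_{h\to 0}D_h(V)$ by comparing fibers over $h=0$, with no need to introduce the generic rank, the Grassmannian, or the valuative criterion. You instead pin down the flat limit as the value at $h = 0$ of the extension of the classifying map $U \to \Gr(r, W)$ (using properness of the Grassmannian) and then specialize inside the closed incidence variety $\mathcal{I} \subset \Gr(r,W)\times W$ to conclude $D_0(v) \in \Lambda$ for each $v$. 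This costs a bit more machinery, but it has the virtue of making the meaning of ``flat limit'' completely explicit---as a limit in $\Gr(r,W)$ of the generic, rank-$r$ images---which is the form in which the lemma is actually used later in the paper (spanning is open for a family of subspaces parameterized over a neighborhood of $h=0$). Both proofs are sound; the paper's is shorter, yours is more self-contained about the definition being invoked.
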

\begin{proof}
Let $\alpha: \mathbb{A}^1 \times V \to \mathbb{A}^1 \times V \times W$ be given by
$(h, v) \mapsto (h, v, D_h(v))$. 
 Because $\alpha$ is continuous we have
\[
\alpha(\mathbb{A}^1 \times V) =
\alpha\left(\overline{(\mathbb{A}^1 \smallsetminus 0) \times V}\right) \subseteq \overline{\alpha((\mathbb{A}^1 \smallsetminus 0) \times V)}.  \]
By definition, the fiber over $0 \in \mathbb{A}^1_{\k}$ on the right-hand side is the flat limit $\lim_{h \to 0}D_h(V)$. The fiber over $0 \in \mathbb{A}^1_{\k}$ on the left is $D_0(V)$.
\end{proof}

\begin{rem}
For a toy example of how our specialization argument will go, suppose we were trying to show that a linear map $D_h: V \to W$ between two-dimensional vector spaces is surjective for general $h$, and we are given the following set of facts. Let $v_1,v_2$ and $w_1,w_2$ be bases of the source and target respectively.
\begin{itemize}
    \item $D_0(\langle v_1 \rangle)$ equals the kernel of $W \to W/\langle w_1 \rangle$ (this is the analogue of Lemma \ref{ql});
    \item for $h \neq 0$, we have $D_h(\langle v_2 \rangle) = D_1(\langle v_2 \rangle)$ independent of $h$ (this is the analogue of \eqref{analogue} in Lemma \ref{flatlimit}).
    \item when $h = 1$, the composition $D_1(\langle v_2 \rangle) \subset W \to W/\langle w_1 \rangle$ is surjective (this is the analogue of Lemma \ref{is});
\end{itemize}
For a concrete example of a family of maps satisfying these conditions, take
\[
    D_h = \left(\begin{matrix} 1 & 0 \\ h & h \end{matrix}\right).
\]

The first fact, combined with Lemma \ref{dh}, shows that $\langle w_1 \rangle \subseteq \lim_{h \to 0}D_h(\langle v_1 \rangle)$. The second fact shows that $\lim_{h \to 0} D_h(\langle v_2 \rangle) = D_1(\langle v_2 \rangle)$. The third fact implies that these two subspaces $\lim_{h \to 0}D_h(\langle v_1 \rangle)$ and $\lim_{h \to 0} D_h(\langle v_2 \rangle)$ span $W$. Because spanning is an open condition, this implies that $D_h$ is surjective for a general $h$.

Notice that we never had to understand $D_h(\langle v_1 \rangle)$; we only relied on our knowledge of $D_0(\langle v_1 \rangle)$.  In our scenario, $\langle v_1 \rangle$ will play the role of $T_p^{k-1}$ and $\langle v_2 \rangle$ will play the role of $T^{\llcorner}_p$. By arguing in an analogous fashion below, we'll never need to understand $\d_{p(h)}(T_{p(h)}^{k-1})$ for general $h$ --- it will suffice to understand $\d_{p(0)}(T_{p(0)}^{k-1})$.
\end{rem}

Returning to the argument, let $p \in \SUT$ be a general point and define $p(h)$ to be the family obtained by scaling the entries $A_{k,2}, \ldots, A_{k,k}$ in the bottom row of $p$ by $h$. So $p(1) = p$ and $q = p(0)$ satisfies the hypotheses of Lemma \ref{ql}. Let $\lim_{h \to 0} \d_{p(h)}(T^{k-1}_{p(h)})$ denote the flat limit of the subspaces $\d_{p(h)}(T^{k-1}_{p(h)}) \subset \{P_2(t), \ldots, P_{k-1}(t)\}$.

\begin{lem}
We have
\begin{equation} \label{lim1} 
\d_{q}(T^{k-1}_q)\subseteq
\lim_{h \to 0} \d_{p(h)}(T^{k-1}_{p(h)}).
\end{equation}
In particular, $\lim_{h \to 0} \d_{p(h)}(T^{k-1}_{p(h)})$ contains the kernel of \eqref{ev1}.
\end{lem}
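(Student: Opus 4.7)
The plan is to apply Lemma \ref{dh} directly to the family $D_h := \d_{p(h)}|_{T^{k-1}_{p(h)}}$. The first step is to observe that since $\SUT$ is an affine space, its tangent space at any point is canonically identified with the underlying vector space $\SUT$, and under this identification the subspace $T^{k-1}_{p(h)}$ defined by the vanishing of entries in the first column and last row is the \emph{same} linear subspace of $\SUT$ for every value of $h$. Call this fixed subspace $T^{k-1}$; similarly, the target $\{P_2(t), \ldots, P_{k-1}(t)\}$ does not depend on $h$.

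Next, I would verify that $D_h : T^{k-1} \to \{P_2(t), \ldots, P_{k-1}(t)\}$ has matrix entries polynomial in $h$. This is immediate from the formula: $\d\phi^k_{p(h)}(A', B')$ is the $\epsilon$-coefficient of $\det((A(h) + \epsilon A')x + (B(h) + \epsilon B')y) \bmod \epsilon^2$, which is polynomial in the entries of $(A(h), B(h), A', B')$, and the entries of $p(h)$ are linear in $h$ by the construction of the family (only the entries $A_{k,2},\ldots,A_{k,k}$ were scaled by $h$). Lemma \ref{dh} therefore yields
\[D_0(T^{k-1}) \;\subseteq\; \lim_{h \to 0} D_h(T^{k-1}).\]
Since $p(0) = q$, the left side is $\d_q(T^{k-1}_q)$ and the right side is $\lim_{h \to 0}\d_{p(h)}(T^{k-1}_{p(h)})$, which is precisely \eqref{lim1}.

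For the ``in particular'' part, I would invoke Lemma \ref{ql}: because $q$ is the general point of $\SUT$ subject to the vanishing of the bottom row entries other than $A_{k,1}$, Lemma \ref{ql} identifies $\d_q(T^{k-1}_q)$ as exactly the subspace of $\{P_2(t),\ldots,P_{k-1}(t)\}$ on which $G$ divides every $P_i(t)$ and $F$ divides $P_2(t)$. This is the same subspace as the kernel of the evaluation map \eqref{myevals}, which is what ``the kernel of \eqref{ev1}'' refers to (viewed as a subspace of $\{P_2(t),\ldots,P_{k-1}(t)\}$ via the description immediately following \eqref{myevals}). Combining with the containment above proves the second assertion.

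There is no serious obstacle: the entire argument is a clean application of Lemma \ref{dh} together with Lemma \ref{ql}. The only point requiring care is the observation that $T^{k-1}_{p(h)}$ is genuinely $h$-independent as a subspace of the ambient vector space $\SUT$; without this, the source of $D_h$ would move with $h$ and Lemma \ref{dh} would not apply verbatim.
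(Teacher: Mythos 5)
Your proposal is correct and follows essentially the same route as the paper: canonically identify the subspaces $T^{k-1}_{p(h)}$ across $h$, note the matrix entries of $\d_{p(h)}$ restricted there are polynomial in $h$, and apply Lemma \ref{dh}, with Lemma \ref{ql} supplying the identification of $\d_q(T^{k-1}_q)$ with the kernel of the evaluation map \eqref{myevals} for the ``in particular'' clause (your reading of ``the kernel of \eqref{ev1}'' as that subspace of $\{P_2(t),\ldots,P_{k-1}(t)\}$ is the intended one). No gaps.
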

\begin{proof}
As $h$ varies, the spaces $T_{p(h)}^{k-1}$ are all canonically identified.
The family of maps 
\[\d_{p(h)}: T_{p(h)}^{k-1} \to \{P_2(t), \ldots, P_{k-1}(t)\}\] 
can thus be represented by a family of matrices where the entries are polynomials in $h$.
The result now follows from Lemma \ref{dh}.
\end{proof}

Next we determine the flat limit of  the subspaces $\d_{p(h)}(T^{\llcorner}_{p(h)})$.
\begin{lem}
\label{flatlimit}
For $h \neq 0$, we have $\d_{p(h)}(T^{\llcorner}_{p(h)}) = \d_p(T^{\llcorner}_{p})$. Hence,
\begin{equation} \label{lim2} \lim_{h \rightarrow 0}\d_{p(h)}(T^{\llcorner}_{p(h)}) = \d_p(T^{\llcorner}_p)
\end{equation}
\end{lem}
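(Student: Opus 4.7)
The plan is to compute $\d\phi^k_{p(h)}$ explicitly on $T^{\llcorner}_{p(h)}$ via cofactor expansion of the determinant and to track how each summand depends on $h$. Recall that a tangent vector $(A',B') \in T^{\llcorner}_{p(h)}$ --- viewed inside the fixed affine space $\SUT$, and thus canonically identified with $T^{\llcorner}_p$ --- has $A'$ supported only in the bottom-row entries $A'_{k,j}$ for $2 \leq j \leq k-1$, and $B'$ supported only in the first-column entries $B'_{i,1}$ for $k-r \leq i \leq k-2$. By construction, $p(h)$ differs from $p$ only by scaling the bottom-row entries $A_{k,j}$ with $j \geq 2$ by $h$; the entry $A_{k,1}$ and all rows above row $k$ are unchanged.

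Extracting the $\epsilon$-coefficient of $\det((A(h)+\epsilon A')x+(B(h)+\epsilon B')y) \bmod \epsilon^2$ yields
\[
\d\phi^k_{p(h)}(A',B') \;=\; \sum_{j=2}^{k-1} A'_{k,j}\, x\cdot C_{k,j}(h) \;+\; \sum_{i=k-r}^{k-2} B'_{i,1}\, y\cdot C_{i,1}(h),
\]
where $C_{i,j}(h)$ denotes the signed $(i,j)$-minor of $A(h)x+B(h)y$. For $j \geq 2$, the cofactor $C_{k,j}(h)$ is a determinant of a submatrix involving only rows $1,\ldots,k-1$, which are independent of $h$, so $C_{k,j}(h) = C_{k,j}(1)$. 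For $i \leq k-2$, the cofactor $C_{i,1}(h)$ is a $(k-1)\times (k-1)$ minor whose submatrix still contains the entire bottom row, restricted to columns $2,\ldots,k$; every entry of this row carries a factor of $h$, and since each term in the Leibniz expansion uses exactly one bottom-row factor, the whole minor is homogeneous of degree $1$ in $h$, giving $C_{i,1}(h) = h\cdot C_{i,1}(1)$.

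Putting these together, $\d\phi^k_{p(h)}$ agrees with $\d\phi^k_p$ on the $A'$-coordinates and equals $h$ times $\d\phi^k_p$ on the $B'$-coordinates. For any $h \neq 0$, rescaling the $B'$-contributions by the nonzero constant $h$ leaves the $\k$-linear span of the image unchanged, so $\d\phi^k_{p(h)}(T^{\llcorner}_{p(h)}) = \d\phi^k_p(T^{\llcorner}_p)$. Applying the projection $[x^2]\oplus\cdots\oplus[x^{k-1}]$, which commutes with scalar multiplication, preserves this equality and gives $\d_{p(h)}(T^{\llcorner}_{p(h)}) = \d_p(T^{\llcorner}_p)$ for every $h \neq 0$. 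Since this family of subspaces of $\{P_2(t),\ldots,P_{k-1}(t)\}$ is constant on $\mathbb{A}^1_{\k} \smallsetminus 0$, its flat limit as $h \to 0$ coincides with that constant subspace $\d_p(T^{\llcorner}_p)$, which is the second claim. The only point requiring care is the clean degree-one $h$-dependence of $C_{i,1}(h)$, but this is forced by the fact that only the bottom row is being rescaled, together with the linearity of the determinant in that row.
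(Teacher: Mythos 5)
Your proof is correct. You establish the precise map-level relation $\d_{p(h)}(A',B') = \d_p(A',hB')$ on $T^{\llcorner}$ by writing the $\epsilon$-linear term of the determinant as a sum of tangent entries times cofactors of $p(h)$, and then using multilinearity in the rescaled bottom row: the cofactors attached to the bottom-row directions $A'_{k,j}$ delete the bottom row and are therefore independent of $h$, while those attached to the first-column directions $B'_{i,1}$ retain the entire rescaled bottom row (the unscaled entry $A_{k,1}$ sits in the deleted first column), hence scale linearly in $h$. Since $(A',B') \mapsto (A',hB')$ is an automorphism of $T^{\llcorner}_p$ for $h \neq 0$, the images agree, and projecting by $[x^2]\oplus\cdots\oplus[x^{k-1}]$ and taking the flat limit of the resulting constant family finishes the argument. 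The paper reaches the same conclusion by a rescaling trick rather than a cofactor expansion: it writes $p(h) = q + hMx$, introduces the auxiliary point $q^*(h)$ in which $A_{k,1}$ is replaced by $h^{-1}A_{k,1}$, rescales the bottom row, and uses that $A_{k,1}$ never multiplies an $\epsilon$; carried out carefully, that manipulation yields exactly your relation $\d_{p(h)}(A',B') = \d_p(A',hB')$. (The paper's displayed identity \eqref{dv}, read literally together with linearity, would assert that the two differentials agree as maps, which is not true in the $B'$-directions; only the equality of images holds, and that is all the paper uses, so nothing downstream is affected.) Both arguments hinge on the same structural facts --- every $T^{\llcorner}$-direction lies in the bottom row or first column, and the unscaled entry $A_{k,1}$ never multiplies a tangent direction --- but your cofactor computation is a slightly more direct route and makes the exact $h$-dependence transparent.
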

\begin{proof} 
Let us write $p(h) = q + hMx$ where $M$ is the matrix with entries $0, A_{k,2}, \ldots, A_{k,k}$ along the bottom row and zeros everywhere else.
As stated above, we have $p(1) = p$ and $p(0) = q$.
For any $v = (A', B') \in T_{p(h)}^{\llcorner}$, we have that 
\[\d_{p(h)}(v) = \epsilon \text{ coefficient of } \det(p(h) + \epsilon (A'x + B'y)) \mod \epsilon^2.\]
For $h \neq 0$, let us define $q^*(h)$ to be the point obtained from $q$ by replacing $A_{k,1}$ with $h^{-1}A_{k,1}$. Then for $h \neq 0$ we have that $p(h)$
is obtained from $q^*(h) + Mx$ by scaling the bottom row by $h$.
In particular,
\[\d_{p(h)}(v) = h \cdot (\epsilon \text{ coefficient of } \det(q^*(h) + Mx + \epsilon h^{-1}(A'x + B'y)) \mod \epsilon^2)\]
for any $v = (A', B') \in T^{\llcorner}_{p(h)}$.
In turn, we claim that $\epsilon$ coefficient above equals the $\epsilon$ coefficient of $\det(q + Mx + \epsilon h^{-1}(A'x + B'y)) = \det (p + \epsilon h^{-1}(A'x + B'y))$. To see the claim, notice that
all $\epsilon$'s occur in either the bottom row or left column. In particular, when we expand the determinant along the bottom row, $A_{k,1}$ never appears multiplied by $\epsilon$.
In summary, we have shown that for $h \neq 0$,
\begin{equation} \label{dv} \mathrm{d}_{p(h)}(v) = h\cdot \d_p(h^{-1}v).
\end{equation}
As $h$ varies, the subspaces $T^{\llcorner}_{p(h)}$ are all canonically identified.
In particular, \eqref{dv} shows that for $h \neq 0$, we have
\begin{equation} \label{analogue} \d_{p(h)}(T_{p(h)}^{\llcorner}) = \d_p(T^{\llcorner}_p).
\end{equation}
That is, the family of subspaces on the left is constant.
Hence, the flat limit as $h \to 0$ is also the space $\d_p(T_p^{\llcorner})$.
\end{proof}

We are now ready to conclude the proof of Lemma \ref{main}.
Recall that at the start of this subsection, we reduced to 
showing that the subspace $\{0, P_2(t), \ldots, P_{k-1}(t), 0\}$ lies in the image of \eqref{me}.
Lemma \ref{is} tells us that
\eqref{ev1} is surjective. 
Furthermore, by Lemma \ref{ql}, the kernel of
\[\{P_2(t), \ldots, P_{k-1}(t)\} \to
\k^{\oplus \deg F} \oplus \bigoplus_{i=2}^{k-1}\k^{\oplus \deg G}
\]
is equal to $\d_q(T^{k-1}_q)$. In other words, we have shown that there is a surjection
\[\d_p(T^{\llcorner}_{p}) \to \{P_2(t), \ldots, P_{k-1}(t)\}/\d_q(T^{k-1}_{q}).\]
In particular,
$\d_p(T^{\llcorner}_{p})$ and $\d_q(T^{k-1}_{q})$ span all of $\{P_2(t), \ldots, P_{k-1}(t)\}$.
On its own, this is not sufficient, since $p$ and $q$ are different points. However, utilizing \eqref{lim1} and \eqref{lim2}, this turns into the fact that
\[\lim_{h \rightarrow 0}\d_{p(h)}(T^{\llcorner}_{p(h)}) \qquad \text{and} \qquad \lim_{h \rightarrow 0} \d_{p(h)}(T^{k-1}_{p(h)})\]
span all of $\{P_2(t), \ldots, P_{k-1}(t)\}$. Since spanning is an open property, the spaces $\d_{p(h)}(T^{\llcorner}_{p(h)})$ and $\d_{p(h)}(T^{k-1}_{p(h)})$ must span for general $h$.

\section{Proofs of Corollary \ref{thecor} and Theorem \ref{planecurves}} \label{corp}

\subsection{Proof of Corollary \ref{thecor}}
If $U^{\vec{e}}(C)$ is non-empty, then $U^{\vec{e},\vec{f}}(C)$ is non-empty for some $\vec{f}$. By Theorem \ref{maint} condition (1), we have $f_i - e_i \geq 0$ and by condition (2), we have $f_i - e_i \geq e_{i+1} - e_i - m$. Hence, $f_i - e_i \geq \max\{0, e_{i+1} - e_i - m\}$. Combining this with condition (3), we find that
\[\delta = \sum_{i=1}^k f_i - e_i \geq \sum_{i=1}^{k-1} \max\{0, e_{i+1} - e_i - m\}. \]

Conversely, if the above condition holds, we must show there exists some $\vec{f}$ so that conditions (1) -- (3) in Theorem \ref{maint} are satisfied. Indeed, we can take $f_i = e_i + \max\{0, e_{i+1} - e_i - m\}$ for $i \leq k-1$ and $f_k = e_k + \delta - \sum_{i=1}^{k-1} \max\{0, e_{i+1} - e_i - m\}$.

\subsection{Proof of Theorem \ref{planecurves}}
Theorem \ref{planecurves} follows quickly from specializing Theorem \ref{maint} to the case when $\delta = 0$ and $m = 1$. Indeed, when $\delta = 0$, conditions (1) and (3) imply that $\vec{e} = \vec{f}$. Moreover, condition (2) then implies that $e_{i+1} - e_i \leq 1$ for all $i$. Since $\vec{e} = \vec{f}$, the formula for the dimension of $U^{\vec{e}, \vec{f}}(C)$ simplifies as follows:
\begin{align*}
\dim U^{\vec{e}}(C) &= \dim U^{\vec{e},\vec{e}}(C) = g - u(\vec{e}) - u(\vec{e}) + h^1(\pp^1, \End(\vec{e})) + h^1(\pp^1, \End(\vec{e})(1)) \\
&= g - h^1(\pp^1, \End(\O(\vec{e})) +  h^1(\pp^1, \End(\vec{e})(1)).
\end{align*}
To conclude, notice that 
\[h^1(\pp^1, \O(e_i - e_j + 1)) - h^1(\pp^1, \O(e_i - e_j)) = \begin{cases} -1 & \text{if $e_i - e_j \leq -2$} \\ 0 & \text{otherwise.} \end{cases} \]
Finally, we note that the splitting type of a line bundle is independent of the choice of projection point. If $C$ is general, projection from a general point is a general curve in $\mathbb{F}_1$ not meeting the directrix.

\section{Further discussion} \label{discuss}

\subsection{Constraints on splitting types of line bundles}
\label{firsthalfsection5}
As discussed at the beginning of this article, curves with distinguished line bundles may have unusual Brill--Noether theory. The first example is covers $\alpha \colon C \rightarrow \PP^1$; the distinguished line bundle is $\alpha^* \cO_{\PP^1}(1)$. The next example is curves on Hirzebruch surfaces, which we have shown have even more special Brill--Noether theory; in addition to $\alpha^*\O_{\pp^1}(1)$, we also have the distinguished line bundle $\cO_C(\Delta)$. These observations naturally lead to the following question: given a cover of $\PP^1$ and a line bundle with known splitting type, what can we say about the splitting types of the other line bundles on the curve?

In general this is an extremely difficult question; we first simplify the situation by assuming that the cover is {\em primitive}, meaning it does not factor through a nontrivial proper subcover. For example, if $k$ is prime, or if the monodromy group of $\alpha$ is the full symmetric group $S_k$, then $\alpha$ is primitive. As shown in \cite{scrinvts} and \cite{succmin}, the splitting types of structure sheaves of primitive curves seem to be significantly simpler than those of imprimitive curves. Therefore for the rest of this section, we focus on primitive curves.

In this section, we prove a general bound (Theorem~\ref{generalbound}) on the splitting types of triples of line bundles $L_1, L_2, L_3$ such that $L_1 \otimes L_2 \simeq L_3$. This provides non-trivial information about how the presence of a special line bundle, say $L_2$ with known splitting type, constrains the possible splitting types of $L_1$ and $L_3$. For example, if $L_2 = \O$, then $L_1 \cong L_3$ and this bound leads to non-trivial constraints on the splitting type of $L_1$. This bound is not special to curves on Hirzebruch surfaces, and an application of this bound gives a new perspective on Conditions (1) and (2) in the case of primitive curves on Hirzebruch surfaces.

Pick any cover $\alpha \colon C \rightarrow \PP^1$ and any line bundles $L_1, L_2, L_3 \in \Pic(C)$ such that $L_1 \otimes L_2 \simeq L_3$. Let $\vec{d}$, $\vec{e}$, and $\vec{f}$ be the splitting types of $\alpha_*L_1$, $\alpha_*L_2$, and $\alpha_*L_3$ respectively. The degrees of these line bundles imposes one important constraint on the splitting types:
\begin{equation}
\label{degreebound}
    \sum_{i = 1}^k (d_i + e_i - f_i) = -(g + k - 1).
\end{equation}
Additionally, we have the following strong bound, which is essentially \cite[Theorem 1.4]{scrinvts}, with minor modifications.

\begin{thm}
\label{generalbound}
If $\alpha$ is primitive, then $f_{i + j - k} \geq d_{i} + e_{j}$ for all $i,j$ with $1 \leq i + j - k \leq k$.
\end{thm}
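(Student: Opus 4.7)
The plan is to exploit the multiplication-of-sections map
\[ m : \alpha_* L_1 \otimes \alpha_* L_2 \longrightarrow \alpha_*(L_1 \otimes L_2) = \alpha_* L_3 \]
by restricting it to well-chosen sub-bundles of the source. Fix $i,j$ with $\ell := i + j - k \in \{1,\ldots,k\}$ and set
\[ M_1 := \bigoplus_{s=i}^{k} \O(d_s) \subseteq \alpha_* L_1, \qquad M_2 := \bigoplus_{s=j}^{k} \O(e_s) \subseteq \alpha_* L_2, \]
the ``top'' summands, of ranks $k - i + 1$ and $k - j + 1$; the minimum summand of $M_1 \otimes M_2$ is $\O(d_i + e_j)$. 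Let $I \subseteq \alpha_* L_3$ be the image sub-sheaf of $m|_{M_1 \otimes M_2}$, which is torsion-free on $\pp^1$ hence locally free, with some splitting type $\vec{g}$.

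With this setup in place, the theorem reduces to a rank bound via two standard facts on $\pp^1$. First, any surjection of bundles $A \twoheadrightarrow \O(\vec g)$ forces $g_1 \geq \min\{\text{summand degrees of } A\}$; applied to $M_1 \otimes M_2 \twoheadrightarrow I$ this gives $g_1 \geq d_i + e_j$. Second, for any sub-sheaf inclusion $\O(\vec g) \hookrightarrow \O(\vec f)$ of rank $r$ into rank $k$, one has $g_s \leq f_{s + k - r}$ for all $s$ (seen by comparing $h^0(\,\cdot\,(n))$ in twists); in particular $g_1 \leq f_{k - \rank I + 1}$. Chaining these two bounds, it suffices to show
\[ \rank I \geq k - \ell + 1, \]
for then $k - \rank I + 1 \leq \ell$ and monotonicity of $\vec f$ yields $f_\ell \geq g_1 \geq d_i + e_j$, as required.

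The rank bound is the only place where primitivity enters. At a generic point $p \in \pp^1$, the map $m|_p$ is pointwise multiplication on the $k$ points of $\alpha^{-1}(p)$: the fibers $M_1|_p, M_2|_p$ are linear subspaces of functions on $\alpha^{-1}(p)$ of dimensions $k - i + 1$ and $k - j + 1$, and $\rank I$ equals the dimension of the span of their pointwise products. The desired inequality is a linear ``Cauchy--Davenport'' type statement, which requires a non-degeneracy condition on the subspaces (it can fail, for instance, if both fibers lie in a proper subalgebra of functions coming from a partition of the $k$ points). The hard part will be establishing this bound, and primitivity enters exactly here: a primitive monodromy action on $\alpha^{-1}(p)$ admits no nontrivial invariant partition, so the fibers of sub-bundles of $\alpha$-pushforwards cannot concentrate in such a subalgebra. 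This is essentially the content of \cite[Theorem 1.4]{scrinvts}, which handles the special case $L_2 = \O_C$, $L_1 \cong L_3$; the ``minor modifications'' alluded to amount to observing that the argument there is a statement about generic fibers and monodromy, independent of the particular identification of $L_1$ with $L_3$, and thus carries over to arbitrary $L_1, L_2$. The range $1 \leq i + j - k \leq k$ is consistent with the degree constraint \eqref{degreebound} and delimits precisely where the rank bound is non-trivial and non-vacuous.
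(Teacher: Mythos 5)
Your reduction is sound and in fact mirrors the paper's setup: restricting the multiplication map $\alpha_*L_1 \otimes \alpha_*L_2 \to \alpha_*L_3$ to the top summands $M_1, M_2$, and noting (i) any nonzero map out of $M_1 \otimes M_2$ lands in summands of degree $\geq d_i + e_j$, and (ii) a rank-$r$ subsheaf $\O(\vec g) \hookrightarrow \O(\vec f)$ satisfies $g_s \leq f_{s+k-r}$, correctly reduces the theorem to the rank bound $\rank I \geq (k-i+1)+(k-j+1)-1$. But that rank bound is precisely the theorem's content --- it is, at the generic point, the statement $\dim_K AB \geq \dim_K A + \dim_K B - 1$ for the $K = K(\PP^1)$-subspaces $A, B \subset K(C)$ spanned by the top basis elements --- and your proposal does not prove it. The paper proves it by invoking the Bachoc--Serra--Z\'emor linear analogue of Kneser's theorem \cite{bsz}: $\dim_K AB \geq \dim_K A + \dim_K B - \dim_K \Stab(AB)$, where $\Stab(AB) = \{x \in K(C) : x\cdot AB \subseteq AB\}$ is an \emph{intermediate field} of $K(C)/K(\PP^1)$; primitivity says there is no proper intermediate field, so $\Stab(AB) = K$ (it cannot be all of $K(C)$ under the contradiction hypothesis, since then $AB = K(C)$ would have dimension $k$), and the bound follows.

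Your sketched mechanism for this step is not correct as stated. First, the fiberwise Cauchy--Davenport statement at a closed point $p \in \PP^1$ is false in general: for subspaces of the \'etale algebra $\CC^{k}$ of functions on $\alpha^{-1}(p)$, $\dim AB$ can be much smaller than $\dim A + \dim B - 1$ (e.g.\ subspaces supported on few points), and the obstruction identified by the linear Kneser theorem is the \emph{stabilizer of the product} $AB$, not the concentration of $A$ or $B$ in a proper subalgebra --- so ruling out monodromy-invariant partitions says nothing directly about $\Stab(M_1|_p \cdot M_2|_p)$ at a single fiber, which need not be monodromy-invariant in any evident way. To connect the obstruction to primitivity one must work over the generic point, where the stabilizer is canonically an intermediate field; this is exactly what the paper does. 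Second, deferring the step to \cite{scrinvts} ``with minor modifications'' echoes the paper's prose, but the proposal neither reproduces that argument nor describes its actual mechanism (which is again the field-theoretic Kneser-type bound), so the proof has a genuine hole at the one place where primitivity is used --- the place you yourself flag as ``the hard part.''
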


\begin{example}
There exist imprimitive covers for which the inequalities in Theorem~\ref{generalbound} do not hold. For example, let $k = 4$. Let $\alpha_1 \colon C_1 \rightarrow \PP^1$ and $\alpha_2 \colon C_2 \rightarrow \PP^1$ be two hyperelliptic curves whose ramification divisors intersect trivially. We obtain a degree $4$ cover $\alpha \colon C \rightarrow \PP^1$ via the pullback:
\[
\begin{tikzcd}
C \arrow{r}{} \arrow{dr}{\alpha} \arrow[swap]{d}{} & C_1 \arrow{d}{\alpha_1} \\
C_2 \arrow{r}[swap]{\alpha_2}& \PP^1
\end{tikzcd}
\]
Let $(L_1,L_2,L_3) \coloneqq (\cO_C,\cO_C,\cO_C)$ and write
\[
(\alpha_*\cO_C)^{\vee} = \cO_{\PP^1} \oplus \cO_{\PP^1}(a_1) \oplus \O_{\pp^1}(a_2) \oplus \cO_{\PP^1}(a_3).
\]
By Riemann-Roch (as in Remark \eqref{nec}), we see that $\det (\alpha_i)_* \O_{C_i} = -g_i - 1$, where $g_i$ is the genus of $C_i$. Because this pushforward admits an $\O_{\pp^1}$ summand, it follows that
$(\alpha_i)_* \cO_{C_i} = \cO_{\PP^1} \oplus \cO_{\PP^1}(-g_i - 1)$. Because the ramification divisors intersect trivially,
\[
    \alpha_* \cO_C = (\alpha_1)_* \cO_{C_1} \otimes (\alpha_2)_* \cO_{C_2}.
\]
Without loss of generality suppose $g_1 \leq g_2$; then $a_1 = g_1+1$ and $a_2 = g_2+1$. The conclusion of Theorem~\ref{generalbound}, applied in the case $i = j = 3$, says that
\[
    a_2 \leq 2a_1
\]
but choosing $g_2$ to be sufficiently large relative to $g_1$ ensures that this inequality does not hold.
\end{example}

\begin{proof}[Proof of Theorem~\ref{generalbound}]
Choose an isomorphism $L_1 \otimes L_2 \rightarrow L_3$ and push forward to obtain $\alpha_*L_1 \otimes \alpha_*L_2 \rightarrow \alpha_*L_3$. Choose a point $\infty \in \PP^1$, and choose a coordinate $t$ on $\mathbb{A}^1 = \PP^1 \setminus \{ \infty \}$, i.e., an isomorphism $\PP^1 \setminus \{ \infty \}  \cong \Spec \CC[t]$.  Then the splitting $\alpha_* L_1 = \cO_{\PP^1}(e_1) \oplus \cdots \oplus \cO_{\PP^1}(e_{k})$ induces a splitting of the $\CC[t]$-module $\Gamma(\mathbb{A}^1, \alpha_*L_1)$ into $\Gamma(\mathbb{A}^1, \alpha_*L_1) = \CC[t]x_1 \oplus \cdots \oplus \CC[t] x_{k}$, where here $x_i$ is a generator of the $i$th summand  of $\Gamma(\mathbb{A}^1, \alpha_*L_1)$. Similarly, we write $\Gamma(\mathbb{A}^1, \alpha_*L_2) = \CC[t]y_1 \oplus \cdots \oplus \CC[t] y_{k}$ and $\Gamma(\mathbb{A}^1, \alpha_*L_3) = \CC[t]z_1 \oplus \cdots \oplus \CC[t] z_{k}$. Now $x_1$, \dots, $x_{k}$ are a basis for $K(C)$ as a $K(\PP^1)$-module (where $K(\cdot)$ indicates the function field), and similarly for the $y_j$ and $z_{\ell}$.

Thus, we obtain a map
\[
    (\CC[t]x_1 \oplus \cdots \oplus \CC[t] x_{k}) \otimes (\CC[t]y_1 \oplus \cdots \oplus \CC[t] y_{k}) \rightarrow (\CC[t]z_1 \oplus \cdots \oplus \CC[t] z_{k})
\]
given exactly by multiplication in $K(C)$. The map from the product of the $i$th summand and the $j$th summand to any summand $\cO(f_{\ell})$ where $f_{\ell} < d_i + e_j$ must vanish, because $\Hom(\cO(d_i) \otimes \cO(e_j), \cO(f_{\ell})) = 0$ in that case.  

The key ingredient in the proof of Theorem~\ref{generalbound} is the observation that ``too many'' elements in the multiplication table of a field cannot all be zero; to quantify this, we make use of a theorem of 
Bachoc, Serra and Z\'emor \cite{bsz} in additive combinatorics, which we now describe. Suppose $L/K$ is a finite field extension of fields, and $A$ and $B$ are two nonzero $K$-subvector spaces  of $L$. Let  $AB$ be the $K$-vector space spanned by elements of the form $ab$ for $a\in A$ and $b \in B$.  Given $\dim_K A$ and $\dim_K B$, Bachoc, Serra, and Z\'emor give a bound on  $\dim_K AB$. Define $\Stab(AB)$ to be the set of elements in $L$ stabilizing $AB$ when acting by multiplication in the field:
$$
\Stab(AB) := \{ x \in L : x (AB) \subset AB \}.$$
Notice that $\Stab(AB)$ is an intermediate field extension of $L/K$: it contains $K$, and is clearly closed under field operations. Theorem 3 of \cite{bsz} states that 

\[
\dim_K AB \geq \dim_K A + \dim_K B - \dim_K \Stab(AB).
\]

Now suppose for contradiction that there are $i$ and $j$ such that $f_{i + j - k} < d_{i} + e_{j}$.  Let $K = K(\PP^1)$, $L = K(C)$, $A$ be the sub-$K$-vector space of $L$ generated by  $x_k$, \dots, $x_{i}$, and $B$ the vector subspace of $L$ generated by $y_k$, \dots, $y_{j}$. Then $AB$ is contained in the vector subspace of $L$ generated by $z_k$, \dots, $z_{i + j - k + 1}$, so 
$$k - (i + j - k + 1) + 1 \geq \dim_K AB \geq (k - i + 1) + (k - j + 1) - \dim_K \Stab{AB},$$
from which $\Stab(AB) \geq 2$.  Because $\Stab(AB)$ is an intermediate field of $L/K$, strictly containing $K$ and strictly contained in $L$, we obtain a contradiction.
\end{proof}

Conditions (1) and (2) of Theorem~\ref{maint} for primitive covers can now be obtained via an application of Theorem~\ref{generalbound}, as we explain. 
For a primitive cover on a Hirzebruch surface, let
\[
    (L_1,L_2,L_3) \coloneqq (\cO(\Delta), L, L(\Delta))
\]
Let $\vec{e}$ be the splitting type of $L$ and let $\vec{f}$ be the splitting type of $L(\Delta)$. We claim that the splitting type of $\cO(\Delta)$ is: 
\[
   \vec{d} = (-(k-1)m - \delta,\dots,-2m-\delta,-m,0).
\]
This follows from pushing forward the sequence of sheaves on $\pp V$
\[0 \rightarrow \O_{\pp V}(-C + D) \rightarrow \O_{\pp V}(D) \rightarrow \iota_*\O_C(\Delta) \rightarrow 0\]
along $\gamma$ and using the fact that
\[\gamma_* \O_{\pp V}(D) = \gamma_* \O_{\pp V}(H - \delta F) = \O_{\pp^1}(-\delta) \otimes V\]
and
\begin{align*}
R^1\gamma_*\O_{\pp V}(-C + D) &= (\gamma_* \omega_\gamma \otimes \O_{\pp V}(C - D))^\vee = (\gamma_* \O_{\pp V}(-H - D + kH + \delta F - D) )^\vee \\
&= (\gamma_* \O_{\pp V}((k - 3)H + (\delta + 2m) F) = (\O_{\pp^1}(\delta + 2m) \otimes \Sym^{k - 3} V)^\vee.
\end{align*}
Applying Theorem~\ref{generalbound} to $i = k$ in the given triple $(L_1,L_2,L_3)$, we obtain
$f_{j} \geq d_k + e_j = e_j$,
recovering Condition (1). Similarly, applying Theorem~\ref{generalbound} to $i = k - 1$ gives the inequality $f_{j-1} \geq d_{k-1} + e_{j} = - m + e_j$, recovering Condition (2). Note that Condition (3) always holds for degree reasons.

\begin{question}
\label{MQuestion}
For which curves $C$ does there exist a line bundle $M$ such that degree constraints together with Theorem~\ref{generalbound} applied to the triple $(M,L, M\otimes L)$ cuts out the set of splitting types of line bundles occurring on $C$?
\end{question}

Precisely, we say that $M$ cuts out the splitting types of line bundles occurring on $C$ if the possible splitting types of $L$ and $M \otimes L$ satisfying the constraints of Theorem~\ref{generalbound} and the degree constraint given in 
\eqref{degreebound}, projected to the splitting types of $L$, are precisely the splitting types of line bundles occurring on $C$. For general primitive curves on Hirzebruch surfaces, the discussion above shows that $M = \cO_C(\Delta)$ answers Question~\ref{MQuestion} affirmatively.

\subsection{Conjectures on splitting types of curves and line bundles}

Given a degree $k$ cover $\alpha \colon C \rightarrow \PP^1$, let $\vec{a} = (a_1, \ldots, a_{k-1})$ be such that
\[
(\alpha_*\cO_C)^{\vee} = \cO_{\PP^1} \oplus \cO_{\PP^1}(a_1) \oplus \dots \oplus \cO_{\PP^1}(a_{k-1}).
\]
These are typically called \emph{scrollar invariants} or the \emph{splitting type of the cover}.
Accounting for duals and the shift in indexing, Theorem~\ref{generalbound}, applied to the case $(\cO_C,\cO_C,\cO_C)$, implies that $a_{i + j} \leq a_i + a_j$ for primitive covers. The second author, in \cite{scrinvts}, conjectures that these are the \emph{only} constraints. Since this comes from applying Theorem~\ref{generalbound} to the triple  $(\cO,\cO,\cO)$, we call the following conjecture the $\cO$-$\cO$ Conjecture.

\begin{conj}[$\cO$-$\cO$ Conjecture \cite{scrinvts}]
There exists a primitive cover $C \rightarrow \PP^1$ of splitting type $\vec{a}$ if and only if $a_{i + j} \leq a_i + a_j$ for all $i,j$.
\end{conj}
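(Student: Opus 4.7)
The plan is to adapt the techniques of Section~\ref{dd} to the setting of arbitrary rational normal scrolls. A degree $k$ primitive cover $\alpha \colon C \to \pp^1$ with scrollar invariants $\vec{a} = (a_1, \ldots, a_{k-1})$ naturally lies inside the projective bundle $S_{\vec{a}} := \pp(\O \oplus \O(a_1) \oplus \cdots \oplus \O(a_{k-1})) \to \pp^1$ as a divisor whose numerical class $\mathcal{C}(\vec{a})$ is pinned down by requiring that $\alpha_*\O_C$ recover the prescribed splitting. Hirzebruch surfaces are the special case $k = 2$; in general $S_{\vec{a}}$ has dimension $k-1$. One direction of the conjecture is immediate from Theorem~\ref{generalbound} applied to $(\O_C, \O_C, \O_C)$, which gives $a_{i+j} \leq a_i + a_j$. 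The hard direction is realizability.

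First, I would carry out a pushforward analysis mirroring Section~\ref{pv} to compute $\mathcal{C}(\vec{a})$ explicitly and produce a Wood-type parameterization of covers in $|\mathcal{C}(\vec{a})|$. Rather than pairs of $k \times k$ matrices as in Section~\ref{ss}, smooth covers with scrollar invariants $\vec{a}$ should correspond to (generic codimension-two degeneracy loci of) certain matrix-valued sections on $\pp^1$, or equivalently to Eagon--Northcott resolutions whose terms are direct sums of line bundles on $\pp^1$ with degrees $a_i + a_j - a_{i+j}$ and related linear combinations. Subadditivity of $\vec{a}$ is precisely the nonnegativity condition on these degrees, mirroring the role of Lemma~\ref{easy}.

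Second, I would need to show that the natural ``determinantal'' map from the affine space of such matrix data to the space of equations defining covers in $|\mathcal{C}(\vec{a})|$ is dominant. Then a Bertini-type argument plus $G_{\vec{a}}$-quotient would produce the required smooth member, analogously to the deduction of Theorem~\ref{maint} from Lemma~\ref{mainl}. Finally, primitivity can be addressed by a standard monodromy argument: show that monodromy acts as the full symmetric group $S_k$ on fibers over a dense open subset of $|\mathcal{C}(\vec{a})|$, which rules out factorizations through proper subcovers for a general member.

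The main obstacle is the dominance step. In the Hirzebruch case, the two-dimensionality of the scroll reduced the image to a binary form which could be dissected via Laplace expansion along the bottom row, yielding the inductive decomposition $T''_p = T^{k-1}_p \oplus T^{\llcorner}_p$ and the delicate flat-limit argument (Lemmas~\ref{is}--\ref{flatlimit}). For higher-dimensional scrolls, the target of the determinantal map has much richer combinatorics, and the analogue of ``inductive sub-scroll'' should be a full flag of scrollar types rather than a single one. I expect the subadditivity inequalities to reappear as exactly the conditions that each stage of such a flag induction produces only nonnegative-degree entries. Making this precise, and controlling the simultaneous flat limits across a multi-step induction, is where the bulk of the difficulty lies, and is presumably why the conjecture remains open beyond small $k$.
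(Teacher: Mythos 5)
This statement is stated in the paper as a \emph{conjecture} (attributed to \cite{scrinvts}); the paper offers no proof of it, only evidence: the ``only if'' direction via Theorem~\ref{generalbound} applied to $(\cO_C,\cO_C,\cO_C)$, the truth of the conjecture in low degree where explicit parametrizations of degree $k$ covers are available, and the realization of a positive proportion of subadditive splitting types in \cite{scrinvts}. Your proposal is likewise not a proof but a research program, and its central steps are exactly the open parts of the problem. The one piece that is solid is the necessity direction, which is indeed immediate from Theorem~\ref{generalbound} and is how the paper motivates the conjecture.

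The main gap is structural. For $k \geq 4$ a degree $k$ cover does \emph{not} sit inside the scroll $S_{\vec{a}}$ as a divisor: it is a subvariety of codimension $\geq 2$ in a scroll of dimension $\geq 3$. Consequently there is no single determinantal equation, no linear system $|\mathcal{C}(\vec{a})|$ of curves, and no Severi variety of the kind that Section~\ref{dd} dominates; the entire mechanism of the paper (a space of matrix pairs mapping by $\det(Ax+By)$ onto the space of defining binary forms, Lemma~\ref{easy} for emptiness, Lemma~\ref{mainl} for dominance) has no direct analogue. Your ``Wood-type parameterization via Eagon--Northcott data'' in step one is precisely the missing ingredient: such structure theorems for degree $k$ covers exist only for small $k$ (this is why the paper can assert the conjecture in low degree), and producing one for general $k$ --- together with identifying which resolution data give scrollar invariants exactly $\vec{a}$, proving the relevant dominance, and then verifying primitivity of a general member --- is the open problem itself, not a reduction of it. You acknowledge the dominance step is unresolved; but even before that step, the claim that the curve is a divisor with class pinned down by $\vec{a}$, and the guess that the resolution degrees are governed by $a_i + a_j - a_{i+j}$, are unsupported and in the first case false for $k \geq 4$. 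So the proposal should be read as a (reasonable) heuristic program, not a proof, and the statement remains a conjecture.
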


In low degree, the parametrizations of degree $k$ covers imply the truth of this conjecture in these cases. For all larger $k$, a ``positive proportion'' of these splitting types is exhibited in \cite{scrinvts}, giving evidence towards the $\cO$-$\cO$ Conjecture. We now extend this conjecture to splitting types of line bundles. We ask: which tuples $(\vec{a},\vec{e})$ arise from a pair $(\alpha \colon C \rightarrow \PP^1, L)$ where $\vec{a}$ is the splitting type of a primitive cover $\alpha$ and $\vec{e}$ is the splitting type of $L$? 

One necessary constraint on $(\vec{a},\vec{e})$ is the following. If $(\vec{a},\vec{e})$ is realized by a primitive cover, then Theorem~\ref{generalbound} applied the triple $(\cO_C,L,L)$ implies that $e_{i + j} \leq a_i + e_j$ for all $i,j$. In the absence of any other information, one might wonder if this necessary constraint, along with the constraints of the $\cO$-$\cO$ conjecture, is also sufficient.

\begin{conj}[$\cO$-$L$ Conjecture]
\label{loconjecture}
There exists a primitive cover $C \rightarrow \PP^1$ of splitting type $\vec{a}$ and a line bundle $L$ with splitting type $\vec{e}$ if and only if
\[
    e_{i + j} \leq a_i + e_j
\]
and
\[
    a_{i + j} \leq a_i + a_j
\]
for all $1 \leq i, j \leq i + j \leq k$.
\end{conj}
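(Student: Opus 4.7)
The ``only if'' direction is immediate from Theorem~\ref{generalbound}. Applying it to $(\cO_C,\cO_C,\cO_C)$ and $(\cO_C,L,L)$ and unwinding the dual/shift relating scrollar invariants $\vec{a}$ to the splitting type of $\cO_C$ yields the two families of inequalities. (Indeed, the conjecture's $L=\cO_C$ specialization is precisely the $\cO$-$\cO$ Conjecture.) The entire substance of Conjecture~\ref{loconjecture} is therefore the construction of a pair $(\alpha\colon C\to\PP^1, L)$ for each admissible $(\vec{a},\vec{e})$.

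My plan is to construct $C$ and $L$ \emph{simultaneously}, rather than first fixing $C$ and then searching inside $\Pic(C)$. The latter strategy cannot work in general: for a \emph{given} primitive cover, the set of realizable splitting types is typically much smaller than the combinatorial conditions in Conjecture~\ref{loconjecture} allow (visible already from Corollary~\ref{thecor} for a specific curve on $\mathbb{F}_m$). For small $k$, I would use the Bhargava--Wood--Poonen parameterizations of degree-$k$ rings of integers and ideal classes (available through $k=5$) to describe pairs $(\alpha, L)$ by tuples of matrices/forms on $\PP^1$ of prescribed degrees, generalizing the Str\o mme-sequence description of Section~\ref{ss}. On each such parameter space, the $(\vec{a},\vec{e})$-stratum is described by the splitting types of the associated vector bundles, and a dimension count in the style of Corollary~\ref{dcor} should show that the stratum is non-empty precisely under the conjectured inequalities.

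For general $k$, I propose an induction on $k$ via degeneration to reducible covers. Given admissible $(\vec{a},\vec{e})$ of length $k$, I would seek a decomposition $k=k_1+k_2$ and sub-pairs $(\vec{a}^{(i)},\vec{e}^{(i)})$ of length $k_i$ that are themselves admissible and whose ``sum'' is $(\vec{a},\vec{e})$ after accounting for a predictable correction at the node. By induction each $(\alpha_i\colon C_i\to\PP^1, L_i)$ exists; glue them along a fiber to form $\alpha_1\sqcup_p \alpha_2$ and choose a line bundle on the nodal cover restricting to $L_i$ on each component. A smoothing argument together with semicontinuity of splitting types would then yield the desired realization on a general fiber, provided the smoothing can be placed inside the locus of primitive covers (a monodromy check).

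The main obstacle is twofold. First, the $\cO$-$\cO$ Conjecture is a special case and is itself open, so any proof of Conjecture~\ref{loconjecture} must either subsume it or be phrased conditionally. Second, and more seriously, semicontinuity in the smoothing step only guarantees that the splitting types of the general fiber \emph{dominate} the target pair in the usual partial order; one needs delicate control of the smoothing to hit the prescribed $(\vec{a},\vec{e})$ rather than a strictly more balanced pair. Achieving such control for arbitrary $k$ seems to require a direct generalization of Wood's matrix parameterization from curves on Hirzebruch surfaces to arbitrary primitive covers of $\PP^1$, which I expect to be the hardest input into any complete proof.
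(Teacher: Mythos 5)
The statement you are trying to prove is not a theorem of the paper: it is Conjecture~\ref{loconjecture}, which the authors explicitly state ``on little evidence in the hopes that it may also motivate the search for a counterexample.'' The paper contains no proof of the ``if'' direction; it only establishes the ``only if'' direction (exactly as you do, via Theorem~\ref{generalbound} applied to the triples $(\cO_C,\cO_C,\cO_C)$ and $(\cO_C,L,L)$, with the dual/shift bookkeeping between $\vec a$ and the splitting type of $\alpha_*\cO_C$) and then verifies the conjecture in special families: genus $0$ and $1$, hyperelliptic, trigonal, general curves on $\PP^1\times\PP^1$ (the case $\vec a=(\delta,\dots,\delta)$), and general curves on $\mathbb{F}_m$ not meeting the directrix (the case $\vec a=(m,2m,\dots,(k-1)m)$), the last two via Corollary~\ref{thecor}. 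So your necessity argument is correct and matches the paper, but there is no ``paper proof'' of sufficiency against which your construction could be compared.

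As for the constructive half of your proposal, it is a research program rather than a proof, and the gaps you name are genuine and unresolved. First, since the $L=\cO_C$ specialization is the $\cO$-$\cO$ Conjecture, which is itself open, no argument of the type you sketch can be unconditional without also resolving that conjecture. Second, the degeneration-and-smoothing step fails as stated: semicontinuity of splitting types under smoothing only tells you that the general fiber's splitting types dominate the special fiber's in the partial order $\leq$, so gluing covers with prescribed $(\vec a^{(i)},\vec e^{(i)})$ and smoothing gives no mechanism to land on the exact target pair $(\vec a,\vec e)$ rather than a strictly more balanced one; pinning down the exact splitting type is precisely the hard content that, in the Hirzebruch case, required the explicit matrix parameterization and the dominance argument of Lemma~\ref{mainl} (a tangent-space computation, not a degeneration). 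Third, the Bhargava--Wood--Poonen parameterizations exist only through $k=5$ and parameterize rings and ideal classes, not pairs of prescribed splitting data for both the cover and the line bundle; turning them into a stratified parameter space with a dimension count in the style of Corollary~\ref{dcor} would itself require a geometric reworking analogous to Section~\ref{wp}, and even then one must prove non-emptiness of the relevant strata, which is exactly the subtle point. You correctly observe that $C$ and $L$ must be constructed simultaneously, and your plan is a reasonable direction, but none of its essential steps is carried out, so the conjecture remains exactly that.
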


We have stated this as a conjecture on little evidence in the hopes that it may also motivate the search for a counterexample.

\medskip
We say that a curve is \emph{abundant} if it has line bundles exhibiting every splitting type specified by the $\cO$-$L$ Conjecture; note that a curve is abundant precisely if the line bundle $\cO$ answers Question~\ref{MQuestion} affirmatively. The $\cO$-$L$ conjecture can be shown to be true by proving abundant curves exist in a number of cases.  Firstly, somewhat trivially, all genus $0$ and $1$ covers are abundant. Secondly, observe that every hyperelliptic curve is abundant. The splitting type of a hyperelliptic cover of genus $g$ is $(a_1) = (g+1)$. Clifford's theorem implies that the curve has a line bundle of splitting type $(e_1,e_2)$ if and only if $e_2 - e_1 - 1 \leq g$; in other words, if $e_2 \leq e_1 + a_1$. Moreover, as proven by the first author, every trigonal curve is abundant\cite[Theorem 1.1]{trig}. This is in contrast to the situation for general $k$-covers for $k > 3$ and large $g$, as the following proposition shows. 

\begin{prop}
For $k > 3$ and $g \geq 2(k-1)$, a general $k$-cover is not abundant. 
\end{prop}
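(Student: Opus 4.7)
The plan is to exhibit a splitting type $\vec{e}$ that satisfies all the $\cO$-$L$ inequalities relative to the balanced splitting $\vec{a}$ of a general $k$-cover, yet has $u(\vec{e}) > g$. By the Hurwitz--Brill--Noether theorem \cite[Theorem 1.2]{refined}, no line bundle on a general $k$-cover of genus $g$ has splitting type $\vec{e}$, so $\cO_C$ fails to cut out the set of splitting types occurring on $C$ in the sense of Question~\ref{MQuestion}.

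Recall that a general $k$-cover of genus $g$ has balanced splitting $\vec{a} = (a_1, \ldots, a_{k-1})$: writing $g = q(k-1) + r$ with $0 \leq r < k-1$, the entries of $\vec{a}$ are $k-1-r$ copies of $q+1$ and $r$ copies of $q+2$. Sort so that $a_1 \leq \cdots \leq a_{k-1}$. The hypothesis $g \geq 2(k-1)$ forces $q \geq 2$. The candidate splitting type I would try is
\[
\vec{e} = (0,\, 0,\, a_1,\, a_2,\, \ldots,\, a_{k-2}).
\]
First I would check that $\vec{e}$ satisfies the $\cO$-$L$ inequalities $e_{i+j} \leq a_i + e_j$. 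For $j \in \{1,2\}$ (so $e_j = 0$), this reduces to $a_{i-1} \leq a_i$ or $a_i \leq a_i$, both immediate from the ordering. For $j \geq 3$, setting $p = i$ and $q = j-2$ turns the inequality into $a_{p+q} \leq a_p + a_q$, which is the $\cO$-$\cO$ condition and is automatic for a balanced $\vec{a}$ (in fact for any primitive cover).

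Next I would compute $u(\vec{e}) = \sum_{i,j} \max\{0, e_i - e_j - 1\}$. The only pairs contributing positively are those with $i \geq 3$ and $j \in \{1,2\}$, since by balance the entries $a_{i-2}$ among themselves differ by at most $1$ and so contribute nothing. Each such pair contributes $a_{i-2} - 1$, giving
\[
u(\vec{e}) = 2\sum_{\ell=1}^{k-2}(a_\ell - 1) = 2(g + k - 1 - a_{k-1}) - 2(k-2) = 2g + 2 - 2 a_{k-1}.
\]
To conclude I would verify $2g + 2 - 2a_{k-1} > g$ in the two sub-cases: when $r = 0$ we have $a_{k-1} = q+1$, and the inequality reduces to $k > 3$; when $r > 0$ we have $a_{k-1} = q+2$, and it reduces to $q(k-3) + r > 2$. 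Both hold under the hypotheses $k > 3$ and $q \geq 2$.

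I expect the main difficulty to be guessing the right $\vec{e}$: it must be unbalanced enough that $u(\vec{e})$ exceeds $g$, yet its jumps must be small enough to respect the $\cO$-$L$ constraints coming from $\vec{a}$. The shape $(0, 0, a_1, \ldots, a_{k-2})$ is natural because it takes the maximum allowed jump $a_1$ from $e_2 = 0$ to $e_3 = a_1$, while the later jumps $e_{\ell+1} - e_\ell = a_{\ell-1} - a_{\ell-2}$ are automatically governed by the $\cO$-$\cO$ condition on $\vec{a}$. Once this candidate is on the table, the rest of the proof is routine bookkeeping.
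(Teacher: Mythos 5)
Your proof is correct, and it follows the same overall strategy as the paper: exhibit one splitting type $\vec{e}$ satisfying the $\cO$-$L$ inequalities $e_{i+j} \leq a_i + e_j$ relative to the balanced $\vec{a}$, show $u(\vec{e}) > g$, and invoke \cite{refined} to conclude that $\vec{e}$ occurs on no line bundle of a general $k$-cover. The only substantive difference is the witness: the paper takes $\vec{e} = (0,\dots,0,\delta,\delta)$ (two large entries), for which $u(\vec{e}) = 2(k-2)(\delta-1)$, whereas you take $\vec{e} = (0,0,a_1,\dots,a_{k-2})$ (two small entries), for which $u(\vec{e}) = 2g+2-2a_{k-1}$; your check of the $\cO$-$L$ inequalities (reducing the $j\geq 3$ cases to $a_{p+q}\leq a_p+a_q$) and your two-case verification of $u(\vec{e})>g$ are all correct. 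In fact your witness is slightly stronger: in the boundary case $k=4$, $g=8$ (so $\delta=3$, $\ell=k-2=2$ in the paper's notation, $\vec{a}=(3,4,4)$), the paper's candidate has $u(\vec{e})=8=g$, so that splitting type does occur and the paper's final strict inequality $(\delta-1)(k-3)>k-2$ fails there, while your candidate $(0,0,3,4)$ has $u(\vec{e})=10>8$ and handles this case; so your choice of $\vec{e}$ actually repairs a small slip in the paper's argument.
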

\begin{proof}
Write 
\[
    g = (k-1)(\delta-1) + \ell
\]
for integers $1 \leq \delta$ and $0 \leq \ell \leq k - 2$. Because $g \geq 2(k-1)$, we must have that $\delta \geq 3$. The splitting type of a general $k$-cover of genus $g$ is balanced and we may write it as
\[
    (\delta,\dots,\delta,\delta+1,\dots,\delta+1)
\]
where the last $\ell$ coordinates are equal to $\delta + 1$. Consider the splitting type $\vec{e} = (0,\dots,0,\delta,\delta)$; clearly, it satisfies the conditions of the $\cO$-$L$ conjecture. By \cite{refined}, the splitting type $\vec{e}$ occurs on a general degree $k$ cover if and only if
\[
    u(\vec{e}) = \sum_{i < j}\max\{0,e_j - e_i - 1\} \leq g.
\]
We would like to show that $u(\vec{e}) > g$, i.e. that
\[
(\delta - 1)2(k - 2) >  (k-1)(\delta-1) + \ell.
\]
Subtracting $(k-1)(\delta-1)$ from both sides and applying the bound $\ell \leq k - 2$, it suffices to show that
\[
    (\delta - 1)(k - 3) > k-2
\]
The truth of this inequality follows from $\delta \geq 3$ and $k > 3$. 
\end{proof}

Despite the fact that a general $k$-cover is not abundant when $g$ is large, if $(k-1) \mid g$ it is possible to produce abundant curves whose splitting types match that of a general $k$-cover.

\begin{prop}
A general curve on $\PP^1 \times \PP^1$ is abundant, hence the $\cO$-$L$ Conjecture is true when $\vec{a} = (\delta,\dots,\delta)$ for $\delta > 0$.
\end{prop}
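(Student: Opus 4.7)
The plan is to deduce the proposition directly from Corollary \ref{thecor} by observing that, when $\vec{a} = (\delta,\dots,\delta)$, the full list of inequalities in the $\cO$--$L$ Conjecture collapses to a single inequality, which is precisely the one appearing in Corollary \ref{thecor} with $m = 0$.

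First, I would check the setup. A general smooth curve on $\PP^1 \times \PP^1 = \mathbb{F}_0$ of bidegree $(k, \delta)$ has genus $g = (k-1)(\delta - 1)$ by \eqref{gf}, and by \eqref{so} the structure sheaf has splitting type $(-\delta,\dots,-\delta,0)$, so indeed $\vec{a} = (\delta,\dots,\delta)$. The condition $a_{i+j} \leq a_i + a_j$ becomes $\delta \leq 2\delta$, which holds since $\delta > 0$, so the content of the $\cO$--$L$ Conjecture in this case is exactly the family of inequalities $e_{i+j} \leq \delta + e_j$ for all $i,j \geq 1$ with $i + j \leq k$.

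Second, I would observe that this family of inequalities is equivalent to the single inequality $e_k - e_1 \leq \delta$. Indeed, if $e_k - e_1 \leq \delta$, then since $\vec{e}$ is non-decreasing we have $e_{i+j} - e_j \leq e_k - e_1 \leq \delta$ for any admissible $i,j$; conversely, the case $i = k-1$, $j = 1$ of the family recovers $e_k - e_1 \leq \delta$. So the set of splitting types allowed by the $\cO$--$L$ Conjecture is precisely $\{\vec{e} : e_k - e_1 \leq \delta\}$.

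Third, I would invoke Corollary \ref{thecor} with $m = 0$. The condition \eqref{iff} reads
\[
\sum_{i=1}^{k-1} \max\{0, e_{i+1} - e_i\} \leq \delta,
\]
and since $\vec{e}$ is non-decreasing, the left-hand side telescopes to $e_k - e_1$. Thus Corollary \ref{thecor} says that $U^{\vec{e}}(C)$ is non-empty for a general curve $C$ on $\PP^1 \times \PP^1$ if and only if $e_k - e_1 \leq \delta$, which is exactly the condition of the $\cO$--$L$ Conjecture. Together with Theorem \ref{generalbound}, which shows that these conditions are necessary for any primitive cover, this proves that a general curve on $\PP^1 \times \PP^1$ is abundant, and hence establishes the $\cO$--$L$ Conjecture for $\vec{a} = (\delta,\dots,\delta)$. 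There is no real obstacle here: the argument is just a combinatorial unpacking of the $m = 0$ case of the main theorem.
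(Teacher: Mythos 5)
Your proposal is correct and follows essentially the same route as the paper: reduce the $\cO$--$L$ inequalities for $\vec{a}=(\delta,\dots,\delta)$ to the single condition $e_k \leq e_1 + \delta$, and match it with the $m=0$ case of Corollary \ref{thecor}, whose left-hand side telescopes to $e_k - e_1$. The only cosmetic difference is that you spell out the telescoping and the necessity direction via Theorem \ref{generalbound} a bit more explicitly than the paper does.
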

\begin{proof}
It suffices to assume that $k \geq 3$. First note that the set of $\vec{e}$ such that
\[
    e_{i+j} \leq a_i + e_j
\]
for all $i,j$ is precisely the set of $\vec{e}$ such that $e_{k} \leq e_1 + \delta$. To prove the proposition, we'll show that a generic curve on a Hirzebruch surface with splitting type $\vec{a}$ is abundant. In this case, $m = 0$. Observe that the linear inequality of Corollary~\ref{thecor} is equivalent to the condition that
\[
    e_k \leq e_1 + \delta.
\]
Thus, the set of splitting types achieved by a general curve on a Hirzebruch surface with splitting type $\vec{a}$ is precisely the set of splitting types conjectured by the $\cO$-$L$ conjecture.
\end{proof}

Our proof exhibited that a general curve on $\PP^1 \times \PP^1$ of fixed class is abundant. One may ask: is \emph{every} curve on $\PP^1 \times \PP^1$ abundant? If $\delta = 1$ then $g = 0$, so the curve is abundant. As previously discussed, if $k \leq 3$, then the curve is abundant as well.

\begin{prop}
A general curve on a Hirzebruch surface not meeting the directrix is abundant, hence the $\cO$-$L$ Conjecture is true when $\vec{a} = (m,\dots,(k-1)m)$ for $m > 0$.
\end{prop}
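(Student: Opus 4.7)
The plan is to match the constraints predicted by the $\cO$-$L$ Conjecture against the existence criterion provided by Corollary~\ref{thecor}, specialized to $\delta=0$. Since $C$ does not meet the directrix, $\delta=0$, and formula~\eqref{so} gives scrollar invariants $\vec{a}=(m,2m,\dots,(k-1)m)$, as claimed. So it suffices to show that the set of splitting types $\vec{e}$ allowed by the conjecture (with these $a_i$) agrees with the set of splitting types realized on a general such $C$.

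First I would unpack the $\cO$-$L$ conditions with $a_i = im$. The inequalities $a_{i+j}\le a_i+a_j$ read $(i+j)m \le im + jm$ and hold with equality, so they impose nothing. The substantive inequalities $e_{i+j}\le a_i+e_j$ become
\[
e_{i+j}-e_j \le im \quad \text{for all } 1\le i,j \text{ with } i+j\le k.
\]
Next I would observe that this family is equivalent to the nearest-neighbor inequalities $e_{j+1}-e_j\le m$ for $1\le j\le k-1$. The specialization $i=1$ is one direction; conversely, summing $e_{j+1}-e_j\le m$ telescopes to $e_{i+j}-e_j\le im$, giving the other.

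Finally, I would invoke Corollary~\ref{thecor} with $\delta=0$, which says that $\vec{e}$ is realized on a line bundle of a general such $C$ if and only if
\[
\sum_{i=1}^{k-1}\max\{0,\, e_{i+1}-e_i-m\} \le 0,
\]
i.e.\ $e_{i+1}-e_i\le m$ for all $i$. This is exactly the nearest-neighbor condition extracted above, so the splitting types realized on a general $C$ coincide with those allowed by the $\cO$-$L$ Conjecture. Thus $C$ is abundant and the $\cO$-$L$ Conjecture holds for $\vec{a}=(m,\dots,(k-1)m)$. The argument is a purely formal matching of inequalities, so no real obstacle arises; all the geometric content has already been absorbed into Corollary~\ref{thecor}.
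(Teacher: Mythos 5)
Your proposal is correct and follows essentially the same route as the paper: with $\delta=0$ the scrollar invariants are $a_i=im$, the $\cO$-$L$ constraints collapse (by specializing/telescoping) to the nearest-neighbor inequalities $e_{i+1}-e_i\le m$, and Corollary~\ref{thecor} with $\delta=0$ shows these are exactly the splitting types realized on a general such curve. No gaps; this matches the paper's argument.
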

\begin{proof}
It suffices to assume that $k \geq 3$. First note that the set of $\vec{e}$ such that
\[
    e_{i+j} \leq a_i + e_j
\]
is precisely the set of $\vec{e}$ such that $e_{i+1} \leq e_i + m$ for all $i$. Then we'll show that a generic curve on a Hirzebruch surface with splitting type $\vec{a}$ is abundant. Indeed, in this case, $\delta = 0$, and the linear inequality of Corollary~\ref{thecor} is equivalent to the condition that
\[
    e_{i+1} \leq e_i + m
\]
for all $i$.
\end{proof}

Note that this proposition also follows from the fact that $\cO(\Delta) = \cO$ in this case and the discussion at the end of Section~\ref{firsthalfsection5}. Every curve with splitting type $(m,\dots,(k-1)m)$ lies on a Hirzebruch surface, so a general curve of that splitting type is abundant. It is reasonable to ask: is \emph{every} curve of splitting type $ (m,\dots,(k-1)m)$ abundant? Conversely, one may ask if, for $k > 3$, curves on Hirzebruch surfaces whose splitting types are not $(m,\dots,(k-1)m)$ or $(\delta,\dots,\delta)$ are abundant: the answer is always no, as we show below.

\begin{prop}
For $k > 3$, every curve on a Hirzebruch surface with $m\delta \neq 0$ is not abundant.
\end{prop}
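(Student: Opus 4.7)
The plan is to exhibit, for any curve on $\mathbb{F}_m$ with $m\delta \neq 0$ and $k > 3$, an explicit splitting type $\vec{e}$ that satisfies the conditions of the $\cO$-$L$ Conjecture but fails the linear inequality of Corollary \ref{thecor}. First I would set up the translation: from \eqref{so}, the splitting type of $\cO_C$ is $(-(k-1)m - \delta, \dots, -m - \delta, 0)$, so the scrollar invariants are $a_i = im + \delta$ for $1 \leq i \leq k-1$. The $\cO$-$L$ Conjecture then predicts realizability of $\vec{e}$ whenever $e_{i+j} - e_j \leq im + \delta$ for all $i, j \geq 1$ with $i + j \leq k$, whereas by Corollary \ref{thecor}, realizable splitting types are precisely those with $\sum_{i=1}^{k-1}\max\{0, e_{i+1} - e_i - m\} \leq \delta$.

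Next I would propose the candidate
\[
\vec{e} = (0,\ m+\delta,\ m+\delta,\ \dots,\ m+\delta,\ e_k),
\]
with $k-2$ middle entries equal to $m+\delta$, and $e_k = \min\bigl(2(m+\delta),\ (k-1)m + \delta\bigr)$. To verify the $\cO$-$L$ conditions I only need to inspect the differences $e_{i+j} - e_j$: if both indices land in the middle plateau the difference is $0$, if $j = 1$ and $i + j < k$ the difference is $m+\delta \leq im + \delta$, and the remaining nontrivial constraints involve $e_k$, namely $e_k \leq (k-1)m + \delta$ (from $j = 1$) and $e_k - (m+\delta) \leq (k-j)m + \delta$ for $j \geq 2$, whose most restrictive case is $j = k-1$, giving $e_k \leq 2(m+\delta)$. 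By our choice of $e_k$ as the minimum of these two bounds, every $\cO$-$L$ inequality is satisfied.

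It remains to check that Corollary \ref{thecor} fails for $\vec{e}$, which I would handle in two cases depending on which bound is active. If $\delta \leq (k-3)m$, then $e_k = 2m + 2\delta$, so the first and last gaps each equal $m + \delta$ and the middle gaps vanish, yielding $\sum \max\{0, e_{i+1} - e_i - m\} = 2\delta > \delta$ since $\delta \geq 1$. If $\delta > (k-3)m$, then $e_k = (k-1)m + \delta$, so the last gap is $(k-2)m$, which is at most $m + \delta$ by hypothesis (so the vector is indeed a valid splitting type), and the contribution to the sum is $(k-3)m > 0$ since $k > 3$ and $m \geq 1$; combined with the first gap contribution of $\delta$, the total is $\delta + (k-3)m > \delta$. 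In either case Corollary \ref{thecor} is violated, so $\vec{e}$ cannot occur on any curve of the given class, and the curve is therefore not abundant.

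The main obstacle is finding a construction that works uniformly across the parameter regime: since the $\cO$-$L$ bound $e_k \leq (k-1)m + \delta$ and the ``balanced cascade'' bound $e_k \leq 2(m+\delta)$ switch roles depending on the relative sizes of $\delta$ and $(k-3)m$, one has to treat both cases and verify that the violation of Corollary \ref{thecor} persists in each. The key structural observation driving the construction is that the $\cO$-$L$ conditions only control differences $e_{i+j} - e_j$ and thus allow two well-separated large gaps, while the sum condition from Corollary \ref{thecor} penalizes each gap independently; once $k \geq 4$ there is enough room to accommodate two such gaps, and once $m, \delta \geq 1$ each gap exceeds $m$ by a positive amount.
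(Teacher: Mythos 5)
Your proof is correct and follows essentially the same strategy as the paper: exhibit an explicit splitting type satisfying the $\cO$-$L$ inequalities for $a_i = im+\delta$ while violating the necessary inequality of Corollary \ref{thecor}, which holds for \emph{every} curve of the given class. The only difference is the choice of witness: the paper takes the single vector $(0,\, m+\delta,\, m+\delta,\, 2m+\delta+1, \dots, 2m+\delta+1)$, whose gap sum is $\delta+1 > \delta$, thereby avoiding your two-case analysis on $\min\bigl(2(m+\delta),\,(k-1)m+\delta\bigr)$.
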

\begin{proof}
In this case, we have $a_i = im + \delta$.
It suffices to exhibit an example of $\vec{e}$ that satisfies the linear inequalities of Conjecture~\ref{loconjecture} but does not satisfy the linear inequalities of Corollary~\ref{thecor}. Take:
\[
    \vec{e} = (0,m+\delta,m+\delta,2m+\delta+1,\dots,2m+\delta+1)
\]
which satisfies the conditions of Conjecture~\ref{loconjecture} if 
\[
    2m+\delta+1 \leq 3m + \delta
\]
and
\[
    2m + \delta + 1 \leq (m + \delta) + (m + \delta)
\]
i.e $1 \leq m, \delta$.
\end{proof}

We conclude this article with one last question.

\begin{question}
For which $\vec{a}$ does there exist an abundant curve with splitting type $\vec{a}$? The discussion above gives two examples of such $\vec{a}$, namely $\vec{a} = (\delta,\dots,\delta)$ for $\delta > 0$ and $\vec{a} = (m,\dots,(k-1)m)$ for $m > 0$. Are there other examples?    
\end{question}

\bibliographystyle{amsplain}
\bibliography{refs}

\providecommand{\bysame}{\leavevmode\hbox to3em{\hrulefill}\thinspace}
\providecommand{\MR}{\relax\ifhmode\unskip\space\fi MR }
\providecommand{\MRhref}[2]{%
  \href{http://www.ams.org/mathscinet-getitem?mr=#1}{#2}
}
\providecommand{\href}[2]{#2}
\begin{thebibliography}{10}

\bibitem{bsz}
Christine Bachoc, Oriol Serra, and Gilles Z\'emor, \emph{Revisiting kneser’s theorem for field extensions}, Combinatorica \textbf{38} (2018), 759--777.

\bibitem{BNR}
Arnaud Beauville, M.~S. Narasimhan, and S.~Ramanan, \emph{Spectral curves and the generalised theta divisor}, J. Reine Angew. Math. \textbf{398} (1989), 169--179. \MR{998478}

\bibitem{BHS}
Manjul Bhargava, Jonathan Hanke, and Arul Shankar, \emph{The mean number of $2$-torsion elements in the class groups of $n$-monogenized cubic fields}, preprint https://arxiv.org/abs/2010.15744, 2020.

\bibitem{cpj}
Kaelin Cook-Powell and David Jensen, \emph{Components of {B}rill-{N}oether loci for curves with fixed gonality}, Michigan Math. J. \textbf{71} (2022), no.~1, 19--45. \MR{4389812}

\bibitem{cpj2}
\bysame, \emph{Tropical methods in {H}urwitz-{B}rill-{N}oether theory}, Adv. Math. \textbf{398} (2022), Paper No. 108199, 42. \MR{4372667}

\bibitem{duke}
William Duke, \emph{Hyperbolic distribution problems and half-integral weight maass forms}, Invent. Math. \textbf{92} (1988), 73--90.

\bibitem{elmv}
Manfred Einsiedler, Elon Lindenstrauss, Philippe Michel, and Akshay Venkatesh, \emph{The distribution of closed geodesics on the modular surface, and {D}uke's theorem}, Enseign. Math. \textbf{58} (2012), 249--313.

\bibitem{FL}
W.~Fulton and R.~Lazarsfeld, \emph{On the connectedness of degeneracy loci and special divisors}, Acta Math. \textbf{146} (1981), no.~3-4, 271--283. \MR{611386}

\bibitem{GP}
D.~Gieseker, \emph{Stable curves and special divisors: {P}etri's conjecture}, Invent. Math. \textbf{66} (1982), no.~2, 251--275. \MR{656623}

\bibitem{GH}
Phillip Griffiths and Joseph Harris, \emph{On the variety of special linear systems on a general algebraic curve}, Duke Math. J. \textbf{47} (1980), no.~1, 233--272. \MR{563378}

\bibitem{llv}
Eric Larson, Hannah Larson, and Isabel Vogt, \emph{Global {B}rill--{N}oether theory over the {H}urwitz space}, preprint https://arxiv.org/abs/2008.10765, 2020.

\bibitem{trig}
Hannah~K. Larson, \emph{Refined {B}rill-{N}oether theory for all trigonal curves}, Eur. J. Math. \textbf{7} (2021), no.~4, 1524--1536. \MR{4340946}

\bibitem{refined}
\bysame, \emph{A refined {B}rill-{N}oether theory over {H}urwitz spaces}, Invent. Math. \textbf{224} (2021), no.~3, 767--790. \MR{4258055}

\bibitem{degen}
\bysame, \emph{Universal degeneracy classes for vector bundles on {$\Bbb{P}^1$} bundles}, Adv. Math. \textbf{380} (2021), Paper No. 107563, 20. \MR{4200467}

\bibitem{n}
Clemens Nollau, \emph{Towards {B}rill--{N}oether theory for spectral curves}, preprint https://arxiv.org/abs/2408.11404, 2024.

\bibitem{AS2}
Artane Siad, \emph{Effect of monogenicity on $2$-torsion in the class group of number fields of odd degree}, preprint https://arxiv.org/abs/2011.08834, 2020.

\bibitem{Stromme}
Stein~Arild Str{\o}mme, \emph{On parametrized rational curves in {G}rassmann varieties}, Space curves ({R}occa di {P}apa, 1985), Lecture Notes in Math., vol. 1266, Springer, Berlin, 1987, pp.~251--272. \MR{908717}

\bibitem{AS}
Ashvin Swaminathan, \emph{Average $2$-torsion in class groups of rings associated to binary $n$-ic forms}, preprint https://arxiv.org/abs/2011.13578, 2020.

\bibitem{scrinvts}
Ravi Vakil and Sameera Vemulapalli, \emph{Splitting types of covers of the projective line}, Forthcoming, 2024.

\bibitem{succmin}
Sameera Vemulapalli, \emph{Bounds on successive minima of orders in number fields and scrollar invariants of curves}, preprint https://arxiv.org/abs/2207.10522, 2022.

\bibitem{Wood}
Melanie~Matchett Wood, \emph{Parametrization of ideal classes in rings associated to binary forms}, J. Reine Angew. Math. \textbf{689} (2014), 169--199. \MR{3187931}

\end{thebibliography}

\end{document}